\theoremstyle{plain}
\newtheorem{thm}{Theorem}[section]
\newtheorem{prop}[thm]{Proposition}
\newtheorem{lem}[thm]{Lemma}
\newtheorem{cor}[thm]{Corollary}
\theoremstyle{definition}
\newtheorem{dfn}[thm]{Definition}
\newtheorem{rem}[thm]{Remark}
\newtheorem*{comm}{Comment}
\newtheorem*{not_conv}{Notation and Conventions}
\newcommand{\N}{\mathbb{N}}
\newcommand{\Z}{\mathbb{Z}}
\newcommand{\Q}{\mathbb{Q}}
\newcommand{\R}{\mathbb{R}}
\newcommand{\C}{\mathbb{C}}
\renewcommand{\P}{\mathbb{P}}
\newcommand{\OO}{\mathcal{O}}
\DeclareMathOperator{\Id}{Id}
\DeclareMathOperator{\Aut}{Aut}
\DeclareMathOperator{\Pic}{Pic}
\DeclareMathOperator{\NS}{NS}
\DeclareMathOperator{\rk}{\mathrm{rk}}
\DeclareMathOperator{\Ima}{Im}
\DeclareMathOperator{\GL}{GL}
\DeclareMathOperator{\Hom}{Hom}
\newcommand*{\sheafhom}{\mathscr{H}\kern -2pt om}
\DeclareMathOperator{\tr}{Tr}
\DeclareMathOperator{\Spec}{Spec}
\DeclareMathOperator{\Hilb}{Hilb}
\DeclareMathOperator{\Kum}{Kum}
\DeclareMathOperator{\Sym}{Sym}
\DeclareMathOperator{\Def}{Def}
\DeclareMathOperator{\Deflt}{\mathrm{Def}^{\mathrm{lt}}}
\DeclareMathOperator{\Amp}{\mathrm{Amp}}
\begin{document}

    \title[Deformations of PEVs]{Deformations of Primitive Enriques Varieties}
    
    \author[F.\ A.\ Denisi]{Francesco Antonio Denisi}
    \address{Université Paris Cité and Sorbonne Université, CNRS, IMJ-PRG, F-75013 Paris, France}
    \email{denisi@imj-prg.fr}
    
    \author[N.\ Tsakanikas]{Nikolaos Tsakanikas}
    \address{Institut de Math\'ematiques (CAG), \'Ecole Polytechnique F\'ed\'erale de Lausanne (EPFL), 1015 Lausanne, Switzerland}
    \email{nikolaos.tsakanikas@epfl.ch}
    
    \author[Z.\ Xie]{Zhixin Xie}
    \address{Institut \'Elie Cartan de Lorraine, Universit\'e de Lorraine, 54506 Nancy, France}
    \email{zhixin.xie@univ-lorraine.fr}

    \thanks{The authors would like to thank Jefferson Baudin, Pietro Beri, L\'eo Navarro Chafloque, Stefano Filipazzi, Claudio Onorati, Gianluca Pacienza, Andrea Petracci and Matei Toma for many valuable conversations related to this work, as well as for providing useful feedback on an earlier draft. FAD is supported by the European Research Council (ERC) under European Union's Horizon 2020 research and innovation programme (ERC-2020-SyG-854361-HyperK). NT is supported by the ERC starting grant $\#804334$. \newline
    \indent 2020 \emph{Mathematics Subject Classification}: Primary: 14J10, 14J42, 32G13; Secondary: 14J28, 14L30. \newline
    \indent \emph{Keywords}: primitive Enriques varieties, primitive symplectic varieties, local Torelli theorem, locally trivial deformation.
    }

    \begin{abstract}
        We develop the deformation theory of primitive Enriques varieties, which are defined as quasi-\'etale quotients of primitive symplectic varieties by nonsymplectic group actions. 
        In particular, we establish a local Torelli theorem for primitive Enriques varieties. As applications thereof, we describe the behavior of certain primitive Enriques varieties under locally trivial deformations.
	\end{abstract}
    
    \maketitle
    
    \begingroup
		\hypersetup{linkcolor=black}
		\setcounter{tocdepth}{1}
		\tableofcontents
	\endgroup

    \section{Introduction}

    The singular version of the celebrated Beauville--Bogomolov decomposition theorem \cite[Théorème 2]{Beauville83a} was established initially in the projective case thanks to the works of Greb--Kebekus--Peternell \cite{GKP16b}, Druel \cite{Druel18}, Druel--Guenancia \cite{DruelGuen18}, Greb--Guenancia--Kebekus \cite{GGK19}, H\"oring--Peternell \cite{HP19} and Campana \cite{Campana21}, and was later extended to the K\"ahler setting by Bakker--Guenancia--Lehn \cite[Theorem A]{BGL22}. It asserts that any compact K\"ahler variety $V$ with klt singularities and numerically trivial canonical class $K_V$ admits a finite quasi-\'etale cover $\widetilde{V} \to V$ such that $\widetilde{V}$ splits as a product 
    \[ 
        \widetilde{V} \simeq T \times \prod_i W_i \times \prod_j Z_j \, ,
    \]
    where $T$ is a complex torus, each $W_i$ is an irreducible Calabi--Yau variety, and each $Z_j$ is an irreducible symplectic variety. 

    Primitive symplectic varieties (see \autoref{dfn:PSV}) are more general than irreducible symplectic varieties (see \autoref{dfn:ISV}) and constitute singular analogs of irreducible holomorphic symplectic (IHS) manifolds by \cite[Theorem 1]{Schwald22}. This notion is also better suited for the study of symplectic varieties from the birational \cite{LehnPac16,LMP25} and the deformation-theoretic perspective \cite{BL21,BL22}: the class of primitive symplectic varieties is stable under the operations of the Minimal Model Program (MMP), while their deformation and moduli theory is well-understood and resembles the one in the smooth case. In particular, small locally trivial deformations of ($\Q$-factorial) primitive symplectic varieties are again ($\Q$-factorial) primitive symplectic varieties, and a local Torelli theorem for primitive symplectic varieties was also established; see
    \cite[Corollary 4.11, Proposition 5.5 and Lemma 5.20]{BL22}.

    From the viewpoint of the classification theory of complex varieties, it is natural to examine not only the irreducible factors appearing in the Beauville--Bogomolov decomposition theorem but also their quotients. For instance, in the smooth setting, an \emph{Enriques manifold} \cite{OS11a} is a projective manifold $Y$ which is not simply connected and whose universal covering $X$ is a projective IHS manifold. Consequently, $Y$ can be viewed as the quotient of $X$ by the finite cyclic group $\pi_1(Y) \hookrightarrow \Aut(X)$ generated by a purely nonsymplectic automorphism and acting freely on $X$. Recall also that $Y$ has torsion canonical bundle whose order coincides with the order of $\pi_1(Y)$, which is called the \emph{index} of $Y$.
    
    Enriques manifolds were defined independently and simultaneously by Oguiso--Schr\"oer \cite{OS11a} and by Boissière--Nieper-Wi{\ss}kirchen--Sarti \cite{BNWS11}; for a comparison of their definitions see the paragraph after \cite[Definition 4.2]{BCS24}. They are regarded as higher-dimensional analogs of Enriques surfaces in the same way that IHS manifolds are viewed as higher-dimensional generalizations of K3 surfaces. All currently known examples of Enriques manifolds have index $2$, $3$ or $4$ and were constructed in op.\ cit.\ using punctual Hilbert schemes, generalized Kummer varieties and moduli spaces of sheaves on certain K3 surfaces.
    Their deformation theory was developed in \cite{OS11b}, where a local Torelli theorem for Enriques manifolds \cite[Theorem 2.4]{OS11b} was obtained.
    
    \medskip
    
    In our earlier joint work \cite{DOTX24} with Ríos Ortiz,
    we introduced the notion of primitive Enriques varieties as a singular version of Enriques manifolds: they are finite quasi-\'etale quotients of primitive symplectic varieties by nonsymplectic group actions (see \autoref{dfn:PEV}). In particular, we studied their birational geometry from the perspective of the MMP and we established a termination statement for \emph{Enriques pairs} (that is, log canonical pairs whose underlying variety is an Enriques manifold); see \cite[Theorem 1.2]{DOTX24}. Our result demonstrates in particular that primitive Enriques varieties naturally appear as underlying varieties of minimal models of Enriques pairs.
    
    In the present paper, which is an extension of \cite{DOTX24}, we develop the deformation theory of primitive Enriques varieties, building on the works \cite{OS11b,Graf18,BL22}. Even though our definition \cite[Definition 5.10]{DOTX24} was motivated by the aforementioned MMP considerations, the defor\-mation-theoretic results obtained here provide further evidence that the definition of a primitive Enriques variety as a singular counterpart of an Enriques manifold is indeed an appropriate one. 
    Before stating our main results, we recall below some relevant notation for the reader's convenience, referring to Section \ref{section:preliminaries} for more details. 

    Let $X$ be a primitive symplectic variety. According to \cite[Theorem 4.7 and Corollary 4.11]{BL22}, the space $\Deflt(X)$ of locally trivial deformations of $X$ is smooth of dimension $h^1(X,\mathscr{T}_X)= h^{1,1}(X)$ and the locally trivial Kuranishi family $\mathscr{X}\to \Deflt(X)$ of $X$ is universal for locally trivial deformations for all of its fibers.
    Now, let $Y\simeq X/G$ be a primitive Enriques variety, where $G \subseteq \Aut(X)$ is generated by a purely nonsymplectic automorphism $g$ of finite order $d \geq 2$ and acts freely in codimension one on $X$. We denote by $\big( H^2(X,\Z)_{\mathrm{tf}} \, , q_X \big)$ the Beauville--Bogomolov--Fujiki lattice of $X$ and by $H^2(X,\C)_1$ the subspace of $H^2(X,\C)$ on which $g$ acts via multiplication by a primitive $d$-th root of unity. In particular, we have
    $H^{2,0}(X) \hookrightarrow H^2(X,\C)_1$, 
    and the Hodge decomposition of $H^2(X,\C)$ yields
    \[
        H^2(X,\C)_1 \simeq H^{2,0}(X)_1 \oplus H^{1,1}(X)_1 \oplus H^{0,2}(X)_1 ,
    \]
    where $H^{1,1}(X) \simeq H^1 \big( X, \Omega^{[1]}_X \big)$.
    
    Our first main result is the following generalization of \cite[Proposition 1.2]{OS11b} to the singular setting.

    \begin{thm}[The locally trivial Kuranishi family of a PEV]
        \label{thm:locally_trivial_Kuranishi_family_of_PEV}
        Let $Y\simeq X/G$ be a primitive Enriques variety, where $X$ is a primitive symplectic variety and $G \subseteq \Aut(X)$ is generated by a purely nonsymplectic automorphism $g \in \Aut(X)$ of finite order $d \geq 2$ and acts freely in codimension one on $X$. The following statements hold:
        \begin{enumerate}[\normalfont (i)]
            \item Let $B$ be the connected component of the fixed locus of the induced action of $G$ on $\Deflt(X)$ that contains the point $0\in \Deflt(X)$. The space
            $\Deflt(Y)\simeq B$ of locally trivial deformations of $Y$ is smooth of dimension 
            \[ 
                h^1(Y, \mathscr{T}_Y) = \dim_\C H^1(X,\mathscr{T}_X)^G = \dim_\C H^1 \big( X, \Omega_X^{[1]} \big)_1 = h^{1,1}(X)_1 \, , 
            \]
            and the locally trivial Kuranishi family 
            \[
                \mathscr{Y}\coloneqq \big( \mathscr{X} \times_{\Deflt(X)} \Deflt(Y) \big)/G  \to \Deflt(Y) 
            \]
            of $Y$ is universal for locally trivial deformations for all of its fibers.
            
            \item If $S$ is a sufficiently small neighborhood of $0 \in \Deflt(X)$, then for each $b \in S\cap B$ the fiber $\mathscr{Y}_b$ is a primitive Enriques variety isomorphic to the quotient $\mathscr{X}_b/G$ of the primitive symplectic variety $\mathscr{X}_b$. In particular, both $\mathscr{X}_b$ and $\mathscr{Y}_b$ are projective for every $b \in S \cap B$.

            \item Assume, finally, that the central fiber $Y \simeq \mathscr{Y}_0$ is $\Q$-factorial. If $T$ is a sufficiently small neighborhood of $0 \in \Deflt(Y)$, then each fiber $\mathscr{Y}_t \simeq \mathscr{X}_t / G$, $t \in T$, is also $\Q$-factorial.
        \end{enumerate}
    \end{thm}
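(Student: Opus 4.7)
The overall plan is to extend the argument of Oguiso--Schr\"oer \cite[Proposition 1.2]{OS11b} to the singular setting, using the properties of the locally trivial Kuranishi family of a primitive symplectic variety established in \cite{BL22} in place of classical Kuranishi theory.

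For (i), I would first produce a $G$-action on the germ $\big( \Deflt(X), 0 \big)$ via the universality of the locally trivial Kuranishi family: pulling back $\mathscr{X} \to \Deflt(X)$ along $g \in G$ yields a new locally trivial deformation of $X$, hence a biholomorphism of $\Deflt(X)$ fixing $0$. Since $\Deflt(X)$ is smooth by \cite[Theorem 4.7]{BL22} and $G$ is finite, Cartan's lemma shows that the fixed locus of the induced $G$-action is smooth, and its component through $0$ is the candidate $B$ for $\Deflt(Y)$. To identify $\Deflt(Y) \simeq B$, I would argue in both directions: a locally trivial deformation of $X$ parametrized by a germ mapping into $B$ is $G$-equivariant by the defining property of $B$, and its quotient by $G$ is a locally trivial deformation of $Y = X/G$; conversely, any locally trivial deformation of $Y$ admits a canonical quasi-\'etale $G$-cover extending $X \to Y$, obtained by unique extension of finite quasi-\'etale covers along flat families of klt varieties (in the spirit of \cite{Graf18}), producing a $G$-equivariant locally trivial deformation of $X$ and hence a map to $B$. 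The tangent space identification $T_0 B = H^1(X,\mathscr{T}_X)^G$ is then formal, and contracting with the symplectic form $\sigma$ gives a $G$-equivariant isomorphism $\mathscr{T}_X \xrightarrow{\sim} \Omega_X^{[1]}$ twisted by the factor $\zeta$ by which $g$ acts on $H^{2,0}(X)$; consequently the $G$-invariants of $H^1(X,\mathscr{T}_X)$ correspond to the $\zeta$-eigenspace of $H^1\big(X,\Omega_X^{[1]}\big)$, which is precisely $H^{1,1}(X)_1$. Universality of $\mathscr{Y} \to \Deflt(Y)$ at every fiber follows by transporting the universality of $\mathscr{X} \to \Deflt(X)$ across this equivalence.

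For (ii), \cite[Theorem 4.7 and Corollary 4.11]{BL22} give that $\mathscr{X}_b$ is a primitive symplectic variety for $b$ in a sufficiently small neighborhood of $0$. The $G$-action on $X$ extends to a fiberwise action on $\mathscr{X}|_B \to B$ by the construction of $B$, and I would verify that on each fiber it remains purely nonsymplectic and free in codimension one: the first holds because the scalar by which $g$ acts on $H^{2,0}(\mathscr{X}_b)$ varies continuously in $b$ but takes values in the discrete set of primitive $d$-th roots of unity, hence is locally constant; the second follows from semicontinuity, since the ramification locus is a closed analytic subset of $\mathscr{X}|_B$ whose fiberwise codimension at $0$ is at least two, a property that persists in a neighborhood. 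For projectivity, I would pull back an ample class from $Y$ (which is projective by the definition of a primitive Enriques variety) to a $G$-invariant integral $(1,1)$-class on $X$ and propagate it to a relative class on $\mathscr{X}|_B \to B$; its restrictions to nearby fibers remain of type $(1,1)$ by $G$-invariance and lie in the K\"ahler cone by openness, so the projectivity criterion for primitive symplectic varieties from \cite{BL22} yields projectivity of $\mathscr{X}_b$, whence also of $\mathscr{Y}_b$.

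Part (iii) is the most delicate step, and I expect the main obstacle to reside here. My first observation would be that $\Q$-factoriality of $\mathscr{Y}_t \simeq \mathscr{X}_t / G$ is equivalent to the \emph{$G$-equivariant $\Q$-factoriality} of $\mathscr{X}_t$, i.e., that every $G$-invariant Weil divisor on $\mathscr{X}_t$ is $\Q$-Cartier; this equivalence uses the bijection between Weil divisors on $\mathscr{Y}_t$ and $G$-invariant Weil divisors on $\mathscr{X}_t$ induced by the quasi-\'etale cover. The plan is then to adapt the Hodge-theoretic openness argument of \cite[Proposition 5.5]{BL22}, which expresses $\Q$-factoriality of a primitive symplectic variety as the equality of $\Pic \otimes \Q$ with the integral $(1,1)$-classes in $H^2$, to the $G$-invariant part of the variation of Hodge structure on $R^2\pi_*\Q$ along $B$. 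The technical heart is to show that $G$-invariant Hodge classes lift to $G$-invariant line bundles on nearby fibers and that the BL openness argument transfers to the $G$-invariant sublocal system; this is plausible because $G$ acts by biholomorphisms preserving the entire Hodge-theoretic package on $\mathscr{X}|_B \to B$, but it requires a careful comparison of $G$-invariants on the Picard and Hodge sides.
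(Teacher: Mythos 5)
Parts (i) and (ii) of your proposal are essentially sound, though they diverge from the paper at several points. For (i), the paper does not argue in both directions as you propose: after extending the $G$-action to $\mathscr{X} \to \Deflt(X)$ (via \autoref{lem:extending_group_action_to_deformation}, using $H^0(X,\mathscr{T}_X)=0$ so that the extension is canonical and unique) and cutting out $B$ by Cartan's linearization, it forms $\mathscr{Y} = \big(\mathscr{X}\times_{\Deflt(X)} B\big)/G \to B$, checks via the commutative square of Kodaira--Spencer maps that $T_0B \to H^1(Y,\mathscr{T}_Y)$ is an isomorphism, and concludes miniversality from the smoothness of $B$; universality then follows from $H^0(Y,\mathscr{T}_Y)=0$ and the openness of versality. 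Your converse direction (lifting an arbitrary locally trivial deformation of $Y$ to a $G$-equivariant one of $X$ by extending the quasi-\'etale cover along the family) is the route of Graf's original argument and can be made to work, but it is not needed and rests on an extension result you would have to justify. In (ii), your discreteness argument for nonsymplecticity and your semicontinuity argument for freeness in codimension one are both fine; the latter is arguably cleaner than the paper's explicit $G$-equivariant local trivializations. For projectivity, however, you are overcomplicating matters: once $\mathscr{Y}_b$ is known to be a primitive Enriques variety, \autoref{lem:PEVs_are_projective} gives projectivity of $\mathscr{Y}_b$ and $\mathscr{X}_b$ intrinsically, fiber by fiber, with no need to propagate ample classes along the family and no appeal to openness of the K\"ahler cone in a singular family.

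The genuine gap is in (iii), and you correctly sense it. Your reduction of the $\Q$-factoriality of $\mathscr{Y}_t \simeq \mathscr{X}_t/G$ to the $G$-equivariant $\Q$-factoriality of $\mathscr{X}_t$ is correct but unnecessary, and the plan you then sketch does not close. The characterization you invoke (``$\Pic\otimes\Q$ equals the integral $(1,1)$-classes in $H^2$'') is not the right one: that equality is a Lefschetz-$(1,1)$-type statement and says nothing about Weil divisors being $\Q$-Cartier; and the ``careful comparison of $G$-invariants on the Picard and Hodge sides'' is precisely the step you leave open. The paper avoids all of this by working directly with the family $\mathscr{Y}\to\Deflt(Y)$: it takes a simultaneous resolution $\mathscr{W}\to\mathscr{Y}$ (which exists by [BL22, Lemma 4.9]), uses the purely topological characterization of $\Q$-factoriality for compact algebraic spaces with rational singularities from [KM92, Propositions 12.1.6 and 12.1.7] and [BL22, Lemma 2.13(4)] --- namely the equality of the images of $H^2(W,\Q)$ and of $\bigoplus_i\Q[E_i^\mu]$ in $H^0\big(Y,R^2\mu_*\Q_W\big)$ --- and observes that this equality persists under small locally trivial deformations because the whole package $\mathscr{W}\to\mathscr{Y}\to\Deflt(Y)$ is topologically locally trivial by [AV21, Proposition 5.1]. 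Without these three ingredients (simultaneous resolution, topological criterion, topological local triviality), your Hodge-theoretic plan for (iii) remains a sketch rather than a proof.
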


    An immediate consequence of the above theorem and the definition of universality of a deformation is the following analog of \cite[Corollary 4.11 and Lemma 5.20]{BL22} for primitive Enriques varieties: small locally trivial deformations of ($\Q$-factorial) primitive Enriques varieties are again ($\Q$-factorial) primitive Enriques varieties.

    \medskip
    
    Our second main result, which extends \cite[Theorem 2.4]{OS11b} to the singular setting and constitutes an analog of \cite[Proposition 5.5]{BL22} for primitive Enriques varieties, is the following one.
    
    \begin{thm}[Local Torelli theorem for PEVs]
        \label{thm:local_Torelli_PEV}
        Let $Y\simeq X/G$ be a primitive Enriques variety and let  
        \[
            \Omega(Y) \coloneqq \big\{ [\omega] \in \P \big( H^2(X,\C)_1 \big) \mid q_X(\omega) = 0, \ q_X(\omega,\overline{\omega}) > 0 \big \} 
        \]
        be the period domain for $Y$ inside $\P \big( H^2(X,\C)_1 \big)$. If $ \mathscr{Y} \to \Deflt(Y)$ is the universal locally trivial deformation of $Y$ 
        and if $\mathscr{X}_t$ denotes the fiber over $t \in \Deflt(X)$ of the universal locally trivial deformation $\mathscr{X} \to \Deflt(X)$ of the primitive symplectic variety $X$, 
        then the local period map
        \[
            \mathscr{P}_Y \colon \Deflt(Y) \to \Omega(Y), \ s \mapsto \P \big( H^{2,0}(\mathscr{X}_s) \big)
        \]
        is a local isomorphism at $0 \in \Deflt(Y)$.
    \end{thm}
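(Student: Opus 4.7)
The plan is to realize $\mathscr{P}_Y$ as the restriction of the local period map $\mathscr{P}_X\colon \Deflt(X)\to \Omega(X)$ of the primitive symplectic variety $X$, via the identification $\Deflt(Y)\simeq B\subset \Deflt(X)$ provided by \autoref{thm:locally_trivial_Kuranishi_family_of_PEV}. First I would verify that $\mathscr{P}_X$ sends $B$ into $\Omega(Y)$: for $b\in B$, the automorphism $g$ extends to $\mathscr{X}_b$, and by local constancy of eigenvalues its induced action on $H^{2,0}(\mathscr{X}_b)$ remains multiplication by $\zeta$; after parallel transport, $H^{2,0}(\mathscr{X}_b)$ therefore lies in $H^2(X,\C)_1$, while the conditions $q_X(\omega)=0$ and $q_X(\omega,\overline{\omega})>0$ hold because the period of $\mathscr{X}_b$ already lies in $\Omega(X)$. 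Thus $\mathscr{P}_Y$ is literally the restriction of $\mathscr{P}_X$ to the $G$-fixed locus $B$.

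Next, I would match dimensions on both sides. By \autoref{thm:locally_trivial_Kuranishi_family_of_PEV}(i) we have $\dim_\C \Deflt(Y) = h^{1,1}(X)_1$, while for $\Omega(Y)$ I would distinguish two regimes. Since $q_X$ is $G$-invariant, eigenspaces of $g$ with eigenvalues $\mu,\nu$ satisfying $\mu\nu\neq 1$ are $q_X$-orthogonal. If $d\geq 3$, then $H^{0,2}(X)_1=0$ (as $g$ acts on $H^{0,2}(X)$ by $\overline{\zeta}\neq\zeta$) and $q_X|_{H^2(X,\C)_1}$ vanishes identically, so $\Omega(Y)$ is open in $\P\big(H^2(X,\C)_1\big)$ of dimension $h^{2,0}(X)_1+h^{1,1}(X)_1-1=h^{1,1}(X)_1$. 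If $d=2$, the $(\pm 1)$-eigenspaces of $g$ are mutually $q_X$-orthogonal, so $q_X|_{H^2(X,\C)_1}$ is non-degenerate and $\Omega(Y)$ is a classical period domain of dimension $\dim_\C H^2(X,\C)_1-2=h^{1,1}(X)_1$. Either way, $\dim_\C \Omega(Y)=h^{1,1}(X)_1=\dim_\C \Deflt(Y)$, and in both cases the natural identification $T_{[\sigma_X]}\Omega(Y)\simeq H^{1,1}(X)_1$ holds.

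Finally, I would compare differentials at $0$. The local Torelli theorem for primitive symplectic varieties \cite[Proposition 5.5]{BL22} identifies $d\mathscr{P}_X|_0$ with the interior-product map $\xi\mapsto \iota_\xi \sigma_X$ from $H^1(X,\mathscr{T}_X)$ to $H^{1,1}(X)\simeq T_{[\sigma_X]}\Omega(X)$, and it is an isomorphism. Because $g\cdot \sigma_X=\zeta \sigma_X$, a short computation gives $g\cdot(\iota_\xi \sigma_X)=\zeta\,\iota_\xi \sigma_X$ for any $G$-invariant $\xi$, so this isomorphism restricts to an isomorphism $H^1(X,\mathscr{T}_X)^G \xrightarrow{\sim} H^{1,1}(X)_1$; under the Kodaira--Spencer identification $T_0\Deflt(Y)=H^1(X,\mathscr{T}_X)^G$ and the identification of $T_{[\sigma_X]}\Omega(Y)$ from the previous paragraph, this restriction is precisely $d\mathscr{P}_Y|_0$. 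Since $\Deflt(Y)$ and $\Omega(Y)$ are smooth of the same dimension at the relevant points, the inverse function theorem yields that $\mathscr{P}_Y$ is a local isomorphism at $0$. The main obstacle I anticipate is a uniform and clean identification of $T_{[\sigma_X]}\Omega(Y)$ across the regimes $d=2$ and $d\geq 3$, since the defining quadric condition on $\Omega(Y)$ behaves very differently in the two cases; once this is settled, the argument reduces to a $G$-equivariant version of the primitive-symplectic local Torelli theorem.
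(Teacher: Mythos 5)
Your proposal is correct and follows essentially the same route as the paper's proof: realize $\mathscr{P}_Y$ as the restriction of $\mathscr{P}_X$ to the $G$-fixed locus $B\simeq \Deflt(Y)$, split into the cases $d\geq 3$ (where $H^2(X,\C)_1$ is $q_X$-isotropic and $\Omega(Y)$ is open in $\P\big(H^2(X,\C)_1\big)$) and $d=2$ (where $\Omega(Y)$ is open in a smooth quadric), and match both sides against $h^{1,1}(X)_1$ using the weight-shift isomorphism $H^1(X,\mathscr{T}_X)^G\simeq H^{1,1}(X)_1$ induced by contraction with $\sigma$. The only cosmetic difference is that the paper deduces injectivity of $d(\mathscr{P}_Y)_0$ from the commutative diagram of Kodaira--Spencer maps and then counts dimensions, whereas you directly identify $d(\mathscr{P}_Y)_0$ with the restriction of the contraction isomorphism; both are fine.
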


    The proofs of Theorems \ref{thm:locally_trivial_Kuranishi_family_of_PEV} and \ref{thm:local_Torelli_PEV} are given in Sections \ref{section:locally_trivial_deform} and \ref{section:LocalTorelliThms}, respectively.
    Proceeding along the lines of the proof of \autoref{thm:local_Torelli_PEV}, we also give a local Torelli theorem for bielliptic surfaces (see \autoref{thm:local_Torelli_bielliptic_surface}), which is crucial for the proof of \autoref{prop:deform_PEV_from_Kum}.
    This statement is implicit in the proof of \cite[Proposition 3.5]{OS11b} and should be known to the experts. However, since we were unable to find explicit references in the literature, we give a detailed presentation in Subsection \ref{subsection:localTorelli_BS}.

    \medskip
    
    In Section \ref{section:applications} we also provide some concrete applications of \autoref{thm:local_Torelli_PEV}, which are motivated by the results in \cite[Section 3]{OS11b}; namely, in Propositions \ref{prop:deform_PEV_from_Hilb}, \ref{prop:deform_PEV_from_Kum} and \ref{prop:deform_PEV_from_Moduli}, respectively, we demonstrate that small locally trivial deformations of a primitive Enriques variety of the form
    \begin{align*}
        \Hilb^n(S) / G &, \ \text{ where $S$ is a K3 surface covering a primitive Enriques surface} , 
        \\
        \Kum^n(A) / G &, \ \text{ where $A$ is the canonical abelian covering of a bielliptic surface} , \text{ or } 
        \\
        M_S(v,H) / G &, \ \text{ where $S$ is a K3 surface covering a very general Enriques surface} , \\
        & \hspace{12.5pt} v \text{ is a positive Mukai vector and $H$ is a $v$-generic polarization on $S$} ,
    \end{align*}
    are again of the same form, under certain hypotheses for both the surfaces involved and the finite group $G$ of automorphisms acting on them.
    We emphasize that $M_S(v,H)$ is, in general, a singular, projective, irreducible symplectic variety, since we do not necessarily assume $v$ to be primitive; see Subsection \ref{subsection:ISVs}.

    \medskip
    
    We conclude the introduction by drawing attention to further recent progress on the study of primitive Enriques varieties.
    First of all, a global Torelli theorem for Enriques manifolds was established in \cite[Theorem B]{Raman25}. Regarding now the construction of more examples of Enriques varieties, \cite[Corollary 3]{BGGG25} shows that Enriques manifolds cannot arise as quotients of IHS manifolds of OG10 type, whereas \cite[Section 6]{BCS24} discusses some singular examples. Finally, as a class of varieties with numerically trivial canonical divisor, the Kawamata--Morrison cone conjecture has also been investigated for primitive Enriques varieties. It was originally resolved for Enriques manifolds of prime index by \cite[Theorem 1.2]{PS23}, while more recently, thanks to a general descent result for finite covers \cite[Theorem 1.4]{Gachet25}, the cone conjecture is now settled for primitive Enriques varieties admitting a finite cover by a primitive symplectic variety with $b_2 \geq 5$.

    \section{Preliminaries}
    \label{section:preliminaries}

    \begin{not_conv}~
        \begin{enumerate}[(1)]
            \item The term \emph{variety} means an integral separated scheme of finite type over $\C$ in the algebraic setting or a reduced, irreducible, Hausdorff complex space in the analytic setting.

            \item A \emph{resolution of singularities} of a variety $V$ is a proper, surjective, bimeromorphic morphism $W \to V$ from a nonsingular variety $W$.
            
            \item Let $V$ be a normal variety. We denote by $\Omega^1_V$ the sheaf of K\"ahler differentials on $V$ and by $V_\text{reg}$ the regular locus of $V$. We consider the natural inclusion $\iota \colon V_\text{reg} \hookrightarrow V$, and we set
            \[ 
                \mathscr{T}_V \coloneqq \big( \Omega_V^1 \big)^{*} \simeq \iota_* \mathscr{T}_{V_\text{reg}}
            \]
            and  
            \[ 
                \Omega_V^{[p]} \coloneqq \big( \Lambda^p \Omega_V^1 \big)^{**} = \big( \Omega_V^p \big)^{**} \simeq \iota_* \Omega_{V_\text{reg}}^p \, \text{ for any } \, p \in \{ 0, \dots, \dim V \} ,
            \]
            where $\Omega_V^{[0]} = \OO_V$.
            We refer to the global sections of $\mathscr{T}_V$ and $\Omega^{[p]}_V$ as \emph{reflexive vector fields} and \emph{reflexive $p$-forms} on $V$, respectively.
        \end{enumerate}
    \end{not_conv}

    \begin{dfn}~
        \label{dfn:nondegenerate_lattice}
        \begin{enumerate}[(1)]
            \item A \emph{lattice} $(L,q)$ is a free $\Z$-module $L$ of finite rank endowed with a quadratic form $q \colon L \to \Z$. When $q$ is nondegenerate, 
            we say that $(L,q)$ is a \emph{nondegenerate lattice}.
            
            \item If $(L,q)$ is a nondegenerate lattice, then the $\R$-linear extension of $q(\cdot, \cdot)$ to the $\R$-vector space $L_\R \coloneqq L \otimes_\Z \R$ can be diagonalized with only $+1$ and $-1$ on the diagonal, and the \emph{signature} of $(L,q)$ is defined to be $(n_+, n_-)$, where $n_\pm$ is the number of $\pm 1$ on the diagonal; see \cite[Chapter 14]{Huy16book}. We adopt the notation $(k,*)$ with $k\in \Z_{\geq 0}$ and we say that the nondegenerate lattice $(L,q)$ has \emph{signature of type $(k,*)$} if its signature is $(k,\ell)$ for some $\ell \in \Z_{\geq 0}$.

            \item A \emph{lattice isomorphism} (or an  \emph{isometry}) between two lattices $(L,q)$ and $(L',q')$ is a $\Z$-module isomorphism $\phi \colon L \to L'$ such that $q' \big( \phi(x) \big) = q(x)$ for all $x \in L$.

            \item If a group $G$ acts on a lattice $(L,q)$,
            then we say that the $G$-action is \emph{orthogonal} if the quadratic form $q$ is $G$-invariant, i.e., $q( g \cdot \ell) = q (\ell)$ for all $g \in G$ and all $\ell \in L$.
        \end{enumerate}
    \end{dfn}
    
    \begin{dfn}
        \label{dfn:orthogonal_representation}
        Let $G$ be a finite group.
        \begin{enumerate}[(1)]
            \item A finite-dimensional complex \emph{representation} of $G$ is the data of a finite-dimensional $\C$-vector space $V$ and a group homomorphism $\rho \colon G \to \GL(V)$.

            \item Let $\rho \colon G \to \GL(V)$ be a finite-dimensional complex representation of $G$.
            Assume that $V$ is equipped with a nondegenerate quadratic form $q$.
            We say that $\rho$ is an \emph{orthogonal representation} of $G$ if its image is contained in the \emph{orthogonal group} $\mathrm{O}(V,q)$ of $(V,q)$, where
            \[
                \mathrm{O}(V,q) 
                \coloneqq \left\{ T \in \GL(V) \mid q \big( T(v) \big) = q(v) \, \text{ for all } \, v \in V \right\} .
            \]
        \end{enumerate}
    \end{dfn}

    \subsection{Hodge duality for K\"ahler spaces}
    
    For the definition and basic properties of \emph{K\"ahler} complex spaces we refer to \cite{Varouchas89,BL21,BL22} and the references therein.
    The next result generalizes \cite[Proposition 6.9]{GKP16b}. It should be well known to the experts, but we include it below nonetheless due to the lack of an explicit reference.
    
    \begin{prop}[Hodge duality]
        \label{prop:Hodge_duality}
        If $V$ is a normal, reduced, compact, K\"ahler, complex space of dimension $n$ with weakly rational singularities (e.g., klt singularities, see \cite[\S 1.4]{KS21} and \cite[Theorem 5.22]{KM98}), then for any integer $p \in \{ 0, \dots, n \}$ there are $\C$-linear isomorphisms
        \[ 
            H^0 \big( V, \Omega_V^{[p]} \big) \simeq \overline{H^p \big( V,\OO_V \big)} . 
        \]
    \end{prop}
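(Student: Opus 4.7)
The plan is to reduce to the smooth compact K\"ahler case via a resolution $\pi \colon W \to V$, where $W$ is a compact K\"ahler manifold (such a resolution exists for a K\"ahler complex space, e.g.\ by a sequence of blow-ups with smooth centers together with Varouchas's results on stability of the K\"ahler property). The chain of isomorphisms I want to establish is
\[
    H^0 \big( V, \Omega_V^{[p]} \big) \;\simeq\; H^0 \big( W, \Omega_W^p \big) \;\simeq\; \overline{H^p(W, \OO_W)} \;\simeq\; \overline{H^p(V, \OO_V)} ,
\]
so the proof reduces to assembling three ingredients.

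First, for the left-hand isomorphism, I would invoke the extension theorem of Kebekus--Schnell \cite{KS21} in the analytic category: under the assumption that $V$ has rational singularities (and a fortiori klt singularities, via \cite[Theorem 5.22]{KM98}), the natural morphism $\pi_* \Omega_W^p \to \Omega_V^{[p]}$ is an isomorphism of coherent sheaves on $V$. Taking global sections then yields the first isomorphism, since $H^0 \bigl( V, \pi_* \Omega_W^p \bigr) = H^0 \bigl( W, \Omega_W^p \bigr)$.

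Second, the middle isomorphism is the classical Hodge decomposition on the compact K\"ahler manifold $W$: complex conjugation identifies the $(p,0)$-part of $H^p(W, \C)$, namely $H^0(W, \Omega_W^p)$, with the $(0,p)$-part, namely $H^p(W, \OO_W)$. For the right-hand isomorphism, I would use the assumption that $V$ has weakly rational singularities, which (in the sense of \cite[\S 1.4]{KS21}) ensures that the natural pullback map
\[
    H^p(V, \OO_V) \to H^p(W, \OO_W)
\]
is an isomorphism (or at least that the two spaces have canonically identified $\C$-linear structures via the Leray spectral sequence applied to $R\pi_* \OO_W$).

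The main obstacle is really the first step, that is, verifying that the Kebekus--Schnell extension theorem is available in the full K\"ahler-analytic setting with only weakly rational singularities, rather than just for projective klt varieties as in \cite[Proposition 6.9]{GKP16b}; once this is in hand, the remaining two ingredients are standard. A minor additional point to justify is the compatibility of the complex conjugation with the identifications on $V$ coming from $\pi$, but this is automatic because all the isomorphisms in the chain are induced by pullback or pushforward along the holomorphic map $\pi$, hence are $\C$-linear and commute with conjugation at the level of de Rham or Dolbeault cohomology.
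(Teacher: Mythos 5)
Your proof is correct and follows essentially the same route as the paper: take a compact K\"ahler resolution $\rho \colon W \to V$, use the Kebekus--Schnell extension theorem to identify $H^0\big(V,\Omega_V^{[p]}\big)$ with $H^0\big(W,\Omega_W^p\big)$, and conclude via classical Hodge theory on $W$ together with the identification $H^p(V,\OO_V)\simeq H^p(W,\OO_W)$. The one point you flag as the main obstacle is precisely what the paper settles by citing \cite[Corollary 1.7]{KS21}, which yields $\rho_*\Omega_W^p\simeq\Omega_V^{[p]}$ for weakly rational singularities in the analytic setting; with that input the argument is, as the paper says, a verbatim repetition of the proof of \cite[Proposition 6.9]{GKP16b}.
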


    \begin{proof}
        Let $\rho \colon W \to V$ be a resolution of singularities of $V$, where $W$ is a compact K\"ahler manifold; see \cite[Proposition 2.7]{BL22}. We may repeat verbatim the proof of \cite[Proposition 6.9]{GKP16b}, except that we now apply \cite[Corollary 1.7]{KS21} instead of \cite[Theorem 2.4]{GKP16b},
        which shows that $\rho_* \Omega^p_W \simeq \Omega^{[p]}_V$ also in this more general setting.
    \end{proof}

    \subsection{Primitive symplectic varieties}
    \label{subsection:PSVs}
    
    Recall that a normal variety $X$ of dimension at least two is called \emph{symplectic} if it is endowed with a \emph{symplectic form} $\sigma$ (i.e., a reflexive $2$-form $\sigma \in H^0 \big( X, \Omega_X^{[2]} \big)$ such that $\sigma |_{X_\text{reg}}$ is closed and everywhere nondegenerate) such that $\sigma |_{X_\text{reg}}$ extends to a holomorphic $2$-form on some (or, equivalently, any) resolution of singularities of $X$, see \cite{Beauville00}. In particular, we have $\dim X = 2n$ for some $n \geq 1$ and $K_X \sim 0$.
    For a thorough discussion of symplectic varieties we refer to \cite{BL21,BL22,DOTX24} and the numerous relevant references therein. In this paper we are mainly concerned with the following subclass of symplectic varieties.
    
    \begin{dfn}[PSV]
        \label{dfn:PSV}
        A compact, K\"ahler, symplectic variety $X$ with symplectic form $\sigma \in H^0 \big( X, \Omega_X^{[2]} \big)$ is called \emph{primitive} if $H^1(X,\OO_X) = 0$ and $H^0 \big( X, \Omega_X^{[2]} \big) = \C \cdot \sigma $.
    \end{dfn}
    
    Fix a primitive symplectic variety $X$ of dimension $2n$. Recall that $X$ has rational singularities by \cite[Theorem 3.4(1)]{BL22} and that the torsion-free part $H^2(X,\Z)_{\text{tf}} \coloneqq H^2(X,\Z) / \text{(torsion)}$ of its second cohomology group is a free $\Z$-module of finite rank by \cite[Lemma 2.1]{BL21} and carries also a pure Hodge structure of weight two by \cite[Corollary 3.5]{BL22}. Therefore, there exists a decomposition
    \begin{equation}
        \label{eq:Hodge_decomposition_PSV}
        H^2(X,\C) \simeq H^{2,0}(X) \oplus H^{1,1}(X) \oplus H^{0,2}(X) ,
    \end{equation}
    where 
    \begin{align*}
        H^{2,0}(X) &\simeq H^0 \big( X, \Omega_X^{[2]} \big) , \\
        H^{1,1}(X) &\simeq H^1 \big( X, \Omega^{[1]}_X \big) , \text{ and } \\
        \overline{H^{2,0}(X)} &\simeq H^{0,2}(X) \simeq H^2(X, \OO_X) ,
    \end{align*}
    see \cite[Corollary 2.3(1)]{BL21}.

    \subsubsection{The Beauville--Bogomolov--Fujiki form}
    
    If $\sigma \in H^{2,0}(X)$ is the cohomology class of a symplectic form on $X$ satisfying $\int_X (\sigma\overline{\sigma})^n=1$, then there exists a quadratic form on $H^2(X,\C)$, called the \emph{Beauville--Bogomolov--Fujiki (BBF) form} and defined as
    \begin{align}
        \label{eq:BBF_form_dfn}
        q_X &\colon H^2(X,\C) \to \C , \nonumber \\[1mm] 
        \alpha \mapsto q_X(\alpha) \coloneqq \frac{n}{2} \int_X (\sigma \overline{\sigma})^{n-1} \, \alpha^2 &+ (1-n) \left( \int_X \sigma^n \, \overline{\sigma}^{n-1} \, \alpha \right) \left( \int_X \sigma^{n-1} \, \overline{\sigma}^n \, \alpha \right) , 
    \end{align}
    which does not depend on the choice of $\sigma$ and whose restriction to $H^2(X,\R)$ is a real nondegenerate quadratic form with signature $\big(3,b_2(X)-3\big)$; see 
    \cite[Section 3]{Schwald20} and
    \cite[Section 5]{BL22}, which are based on \cite{Nam01b,Matsushita01,Kir15}.
    In particular, by \eqref{eq:BBF_form_dfn} we infer that
    \begin{equation}
        \label{eq:BBF_form_evaluation_1}
        q_X(\sigma) = 0 = q_X(\overline{\sigma}) , \quad q_X(\sigma+\overline{\sigma}) = 1 
    \end{equation}
    and
    \begin{align}
        \label{eq:BBF_form_evaluation_2}
        q_X(\sigma + v) = q_X(\overline{\sigma} +v) &= q_X(v) \\[1mm]
        &= \frac{n}{2} \int_X (\sigma \overline{\sigma})^{n-1} \, v^2 \quad \text{for any } \ v \in H^{1,1}(X) . \nonumber
    \end{align}
    Moreover, according to \cite[Lemma 5.7]{BL22}, we can normalize $q_X$ so that it becomes an integral quadratic form $H^2(X,\Z)\to \Z$, and hence $q_X$ is invariant under locally trivial deformations of $X$ (see \autoref{dfn:deformation}). The \emph{BBF lattice} $\big( H^2(X,\Z)_{\text{tf}} \, , q_X \big)$ of $X$ is thus nondegenerate with signature of type $(3,*)$.

    \begin{comm}
        We distinguish between the induced nondegenerate symmetric bilinear form $q_X ( \cdot , \cdot)$ on $H^2(X,\C)$ defined by 
        \begin{equation}
            \label{eq:quadratic_form}
            q_X(\alpha,\beta) \coloneqq \frac{1}{2} \big( q_X(\alpha+\beta)-q_X(\alpha) - q_X(\beta) \big) , 
        \end{equation}
        see \cite[Remark 22]{Schwald20}, and the induced Hermitian form $q_X^h$ on $H^2(X,\C)$ defined by 
        \begin{equation}
            \label{eq:hermitian_form}
            q_X^h (\alpha,\beta) \coloneqq q_X(\alpha,\overline{\beta}) ,
        \end{equation}
        see \cite[p.\ 1636]{OS11b}.
        For some useful properties of $q_X^h$ we refer to \autoref{lem:hermitian_form_PSV} below.
    \end{comm}

    \begin{rem}
        \label{rem:orthogonality_BBF_form}
        It follows from \eqref{eq:BBF_form_evaluation_1}, \eqref{eq:BBF_form_evaluation_2} and \eqref{eq:quadratic_form} that, exactly as in the smooth case, the decomposition
        \[ 
            H^2(X,\C) \simeq \left( H^{2,0}(X) \oplus H^{0,2}(X) \right) \oplus H^{1,1}(X) 
        \]
        is orthogonal with respect to $q_X(\cdot, \cdot)$.
    \end{rem}

    \subsubsection{Nonsymplectic finite group actions on PSVs}
    
    We now recall the following standard definitions.
    
    \begin{dfn}
        Let $X$ be a primitive symplectic variety and denote by $\sigma \in H^0 \big( X, \Omega_X^{[2]} \big)$ its symplectic form.
        \begin{enumerate}[(1)]
            \item An automorphism $g \in \Aut(X)$ is called \emph{nonsymplectic} if $g^{[*]} \sigma \neq \sigma$. 
              
            \item A nonsymplectic automorphism $g \in \Aut(X)$ is called \emph{purely nonsymplectic} if there exists a primitive root of unity $\xi \neq 1$ of the same order as $g$ such that $g^{[*]} \sigma = \xi \sigma$.
        \end{enumerate}
        We also say that a finite, nontrivial subgroup $G$ of $\Aut(X)$ acts \emph{nonsymplectically} on $X$ if every element $g \in G \setminus \{ \Id_X\}$ is a nonsymplectic automorphism of $X$.
    \end{dfn}
    
    Assume now that the given primitive symplectic variety $(X, \sigma)$ admits a purely nonsymplectic automorphism $g \in \Aut(X)$ of order $d \geq 2$. Set $G \coloneqq \langle g \rangle \subseteq \Aut(X)$ and let $\xi$ be a primitive $d$-th root of unity such that $g^{[*]} \sigma = \xi \sigma$. As $h^{2,0}(X) = 1$, observe that the natural map
    \[
        \rho \colon G \to \GL \left( H^0 \big( X, \Omega^{[2]}_X \big) \right) , \ g \mapsto g^{[*]}
    \]
    induces a bijection 
    \begin{equation} \label{eq:bijection_groups}
        G \to \mu_d(\C), \ g \mapsto \xi
    \end{equation}
    onto the multiplicative group of $d$-th roots of unity, see \cite[Subsection 4.2]{DOTX24}. Note also that the group of characters $\Hom \big( \mu_d(\C) , \C^* \big)$ is cyclic of order $d$ and contains a canonical generator, namely the identity character $\xi \in \mu_d(\C) \mapsto \xi \in \C^*$. We use below the identification between $\Hom \big( \mu_d(\C) , \C^* \big)$ and $\Z / d\Z$.
    
    Consider next the linear representation
    \begin{equation}
        \label{eq:representation_H2}
        G \to \GL\left( H^2(X,\C) \right) , \ g \mapsto g^*.
    \end{equation}
    The cohomology vector space $H^2(X,\C)$ is endowed with a \emph{weight decomposition} indexed by the characters $i \in \Z / d\Z$, that is,
    \begin{equation}
        \label{eq:weight_decomp}
        H^2(X,\C) = \bigoplus_{i\in \Z/ d\Z} H^2(X,\C)_i,
    \end{equation}
    where the \emph{weight space} $H^2(X,\C)_i$ is the $\C$-vector subspace of $H^2(X,\C)$ on which the generator of $G$ acts via multiplication by $\xi^i \in \C^*$. In particular, $H^2(X,\C)_0$ is the $G$-invariant subspace of $H^2(X,\C)$, and $H^{2,0}(X) \simeq \C \cdot \sigma \subseteq H^2(X,\C)_1$.
    Recall also that by \cite[Lemmas 20 and 23]{Schwald20} we have 
    \begin{equation}
        \label{eq:BBF_form_invariance}
        q_X (\alpha) = q_X \big( g^* \alpha \big) \ \text{ for any } \, \alpha \in H^2(X,\C) , 
    \end{equation}
    which shows that the representation \eqref{eq:representation_H2}
    is orthogonal. In view of the bijection \eqref{eq:bijection_groups}, the BBF lattice $\big( H^2(X,\Z)_{\text{tf}} \, , q_X \big)$ of $X$ is also endowed with an orthogonal $\mu_d(\C)$-action. 
    We can now extend \cite[Lemma 2.1]{OS11b} to the singular setting.
    
    \begin{lem}
        \label{lem:hermitian_form_PSV}
        Let $X$ be a primitive symplectic variety and let $q_X$ be its BBF form. Assume that $X$ admits a purely nonsymplectic automorphism of order $d \geq 2$ and consider a weight decomposition of $H^2(X,\C)$ as in \eqref{eq:weight_decomp} above. Then
        \[ 
            q_X^h \colon H^2(X,\C) \times H^2(X,\C)\to \C, \ (\alpha,\beta) \mapsto q_X(\alpha,\overline{\beta}) 
        \]
        is a Hermitian form on $H^2(X,\C)$ whose restriction to the weight space $H^2(X,\C)_1$ is nondegenerate and has signature of type $(2,*)$ for $d=2$ and $(1,*)$ for $d\geq 3$.
    \end{lem}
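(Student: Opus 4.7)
The plan is to analyze $q_X^h|_{H^2(X,\C)_1}$ by combining the Hodge decomposition \eqref{eq:Hodge_decomposition_PSV} with the weight decomposition \eqref{eq:weight_decomp}, following the strategy of \cite[Lemma 2.1]{OS11b} while checking that each ingredient remains available in the singular setting. First, I would verify that $q_X^h$ is Hermitian: since $q_X(\cdot,\cdot)$ is the complex bilinear extension of a real symmetric form on $H^2(X,\R)$, one immediately obtains $q_X^h(\alpha,\beta)=\overline{q_X^h(\beta,\alpha)}$ together with the correct linearity properties. Next, the invariance \eqref{eq:BBF_form_invariance} gives, for $\alpha \in H^2(X,\C)_i$ and $\beta \in H^2(X,\C)_j$, that $q_X(\alpha,\beta) = q_X(g^*\alpha,g^*\beta) = \xi^{i+j}q_X(\alpha,\beta)$, forcing $q_X(\alpha,\beta)=0$ unless $i+j\equiv 0\pmod d$. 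Since complex conjugation interchanges $H^2(X,\C)_i$ and $H^2(X,\C)_{-i}$, the weight spaces are pairwise orthogonal for $q_X^h$, and the analysis localizes to $H^2(X,\C)_1$.

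Second, I would refine the Hodge decomposition restricted to $H^2(X,\C)_1$. Since $g^{[*]}\sigma = \xi\sigma$, one has $H^{2,0}(X)\subseteq H^2(X,\C)_1$, and therefore
\[
  H^2(X,\C)_1 \simeq H^{2,0}(X) \oplus H^{1,1}(X)_1 \ \text{ for } \ d\geq 3 ,
\]
while for $d=2$ one additionally has $H^{0,2}(X)\subseteq H^2(X,\C)_{-1}=H^2(X,\C)_1$, yielding
\[
  H^2(X,\C)_1 \simeq H^{2,0}(X)\oplus H^{1,1}(X)_1\oplus H^{0,2}(X) ;
\]
both decompositions are $q_X^h$-orthogonal by \autoref{rem:orthogonality_BBF_form}. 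A direct computation using \eqref{eq:BBF_form_evaluation_1} gives $q_X^h(\sigma,\sigma)=q_X(\sigma,\overline{\sigma})=\tfrac12>0$, and analogously $q_X^h(\overline{\sigma},\overline{\sigma})=\tfrac12>0$, contributing a positive definite block of complex dimension $1$ for $d\geq 3$ and of complex dimension $2$ for $d=2$.

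The crucial step is to show that $q_X^h$ is negative definite on $H^{1,1}(X)_1$. Denote by $H^{1,1}(X)^\R \coloneqq H^{1,1}(X)\cap H^2(X,\R)$ the real $(1,1)$-subspace, on which $q_X$ has signature $\big(1,b_2(X)-3\big)$ by \autoref{rem:orthogonality_BBF_form}. Averaging a K\"ahler class over $G$ produces a $G$-invariant K\"ahler class, so $q_X$ has a positive direction on the $G$-invariant part of $H^{1,1}(X)^\R$, which coincides with $H^{1,1}(X)_0 \cap H^2(X,\R)$. Consequently, the $q_X$-orthogonal complement of the invariants in $H^{1,1}(X)^\R$ must be negative definite, and this complement is precisely the real form of $\bigoplus_{i\neq 0}H^{1,1}(X)_i$. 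For $d=2$, the subspace $H^{1,1}(X)_1$ is complex-conjugation stable and hence the complexification of a negative definite real subspace, so $q_X^h|_{H^{1,1}(X)_1}$ is negative definite. For $d\geq 3$, the weight argument yields $q_X(\alpha)=\xi^{2}q_X(\alpha)=0$ for any $\alpha\in H^{1,1}(X)_1$, whence
\[
  q_X(\alpha+\overline{\alpha}) = 2\,q_X(\alpha,\overline{\alpha}) = 2\,q_X^h(\alpha,\alpha) ,
\]
and $\alpha+\overline{\alpha}$ is a real element of the above negative definite subspace, forcing $q_X^h(\alpha,\alpha)<0$ whenever $\alpha\neq 0$. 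Combining these contributions yields Hermitian signatures of type $(2,*)$ for $d=2$ and $(1,*)$ for $d\geq 3$, with nondegeneracy automatic from strict definiteness on each block.

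The main obstacle is the bookkeeping in the last step: one must ensure that the $q_X$-orthogonal complement of the $G$-invariants in $H^{1,1}(X)^\R$ genuinely coincides with the real form of $\bigoplus_{i\neq 0}H^{1,1}(X)_i$, so that its negative definiteness transfers to Hermitian negative definiteness of the weight-$1$ complex subspace. The signature $\big(1,b_2(X)-3\big)$ of $q_X|_{H^{1,1}(X)^\R}$ in the singular K\"ahler setting, inherited from the signature $\big(3,b_2(X)-3\big)$ on $H^2(X,\R)$ via \autoref{rem:orthogonality_BBF_form}, is exactly what makes this accounting proceed identically to the smooth case of \cite[Lemma 2.1]{OS11b}.
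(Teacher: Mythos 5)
Your proposal is correct and follows essentially the same route as the paper, which simply defers to the proof of \cite[Lemma 2.1]{OS11b} after noting that the required inputs (the signature $\big(3,b_2(X)-3\big)$ of $q_X$, the orthogonality of the Hodge decomposition, the $g^*$-invariance \eqref{eq:BBF_form_invariance}, and the evaluations \eqref{eq:BBF_form_evaluation_1}) remain valid in the singular setting; your write-up is just that argument carried out in full, including the key step of averaging a K\"ahler class to locate the unique positive direction of $q_X|_{H^{1,1}(X)^\R}$ inside the weight-$0$ part.
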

    
    \begin{proof}
        To prove the statement, we repeat verbatim the proof of \cite[Lemma 2.1]{OS11b}, taking \eqref{eq:BBF_form_evaluation_1}, \eqref{eq:quadratic_form}, \eqref{eq:hermitian_form}, \eqref{eq:BBF_form_invariance} and \cite[Proposition 25]{Schwald20} into account.
    \end{proof}

    \subsection{Irreducible symplectic varieties}
    \label{subsection:ISVs}

    We recall here the definition of a more special kind of primitive symplectic varieties and include afterwards some relevant remarks for the sake of completeness.
    
    \begin{dfn}[ISV]
        \label{dfn:ISV}
        A compact K\"ahler variety $X$ with rational singularities is called an \emph{irreducible symplectic variety} if for every quasi-\'etale cover $\widetilde{X} \to X$ of $X$, the exterior algebra of global reflexive forms on $\widetilde{X}$ is generated by a symplectic form $\widetilde{\sigma} \in H^0 \big( \widetilde{X}, \Omega_{\widetilde{X}}^{[2]} \big)$.
    \end{dfn}
    
    Note that the above definition in the K\"ahler setting, which is \cite[Definition 1.1]{BGL22}, coincides with \cite[Definition 8.16]{GKP16b}, which is originally formulated in the projective setting; see the paragraph after \cite[Definition 1.1]{BGL22} and \cite[Corollary 5.24]{KM98}.

    By definition and by \autoref{prop:Hodge_duality}, irreducible symplectic varieties are automatically primitive symplectic, but the converse does not hold: counterexamples are mentioned, for instance, in \cite[Remark 4.6(ii)]{DOTX24}. In fact, it follows readily from the Beauville--Bogomolov decomposition theorem in the singular K\"ahler setting \cite[Theorem A]{BGL22} and the K\"unneth formula that an irreducible symplectic variety is precisely a primitive symplectic variety $X$ such that any quasi-\'etale cover $\widetilde{X}$ of $X$ is again a primitive symplectic variety; see \cite[Remark 2.3]{EFGMS25}.

    \subsubsection{Examples of ISVs: moduli spaces of sheaves on K3 surfaces}

    We first recall here some terminology which is relevant for the construction of moduli spaces of sheaves on K3 surfaces. These spaces will appear exclusively in Subsection \ref{subsection:deforming_PEV_from_moduli}. For further information about everything that follows we refer, for instance, to \cite[Chapter 6]{HuyLehn10book} and \cite[Subsections 1.1 and 2.1]{PeregoRapagnetta23}.

    \medskip
    
    Fix henceforth a K3 surface $S$ and denote by $\sigma \in H^{2,0}(S)$ the (unique up to scalar) symplectic form on $S$. Consider the even integral cohomology of $S$, i.e.,
    \[
        \widetilde{H}(S,\Z) \coloneqq H^{0}(S,\Z) \oplus H^{2}(S,\Z) \oplus H^{4}(S,\Z) ,
    \]
    and endow it with a lattice structure by defining a nondegenerate symmetric bilinear form, the so-called \emph{Mukai pairing}, as follows:
    \begin{align*}
        \langle \cdot , \cdot \rangle \colon \widetilde{H}(S,\Z) \times \widetilde{H}(S,\Z) & \rightarrow \Z \\
        (r_1, \ell_1, s_1), (r_2, \ell_2, s_2) &\mapsto -r_1s_2 + \ell_1 \ell_2 -s_1r_2 \, ,
    \end{align*}
    where $r_i \in H^{0}(S,\Z) \simeq \Z$, $\ell_i \in H^{2}(S,\Z)$ and $s_i \in H^{4}(S,\Z) \simeq \Z$.
    The resulting nondegenerate lattice $\big( \widetilde{H}(S,\Z) , \langle \cdot , \cdot \rangle \big)$ is called the \emph{Mukai lattice}. An element $v = (r, \ell, s) \in \widetilde{H}(S,\Z)$ is called a \emph{Mukai vector} and satisfies
    \[
        v^2 \coloneqq \langle v,v \rangle = \ell^2 - 2rs \in 2\Z ,
    \]
    since the intersection form on $S$ is even.
    It is said to be \emph{primitive} if it is not divisible by any integer $m \geq 2$.
    
    Consider next the complexification
    $\widetilde{H}(S,\C) \coloneqq \widetilde{H}(S,\Z) \otimes_Z \C $ of $\widetilde{H}(S,\Z)$,
    endowed with the $\C$-linear extension $\langle \cdot , \cdot \rangle_\C$ of the Mukai pairing $\langle \cdot , \cdot \rangle$. The natural Hodge structure of weight two on $H^2(S,\Z)$, which is given by the Hodge decomposition 
    \[ 
        H^2(S,\C) 
        = H^{2,0}(S) \oplus H^{1,1}(S) \oplus H^{0,2}(S) ,
    \]
    can be extended to a Hodge structure of weight two on $\widetilde{H}(S,\Z)$ by setting 
    \begin{align*}
        \widetilde{H}^{2,0}(S) &\coloneqq H^{2,0}(S) \simeq \C \sigma , 
        \\
        \widetilde{H}^{0,2}(S) &\coloneqq H^{0,2}(S) \simeq \C \overline{\sigma} , \text{ and}
        \\ 
        \widetilde{H}^{1,1}(S) &\coloneqq H^0(S,\mathbb{C})\oplus H^{1,1}(S)\oplus H^4(S,\mathbb{C}).
    \end{align*}

    For any Mukai vector $v = (r, \ell, s) \in \widetilde{H}(S,\Z)$, we define its \emph{orthogonal sublattice} in $\widetilde{H}(S,\Z)$ with respect to the Mukai pairing as
    \[
        v^{\perp} \coloneqq \big\{ w \in \widetilde{H}(S,\Z) \mid \langle v,w \rangle = 0 \big\} 
    \]
    and we denote by $(v^\perp)_\C$ its complexification, which is endowed with the restriction of $\langle \cdot , \cdot \rangle_\C$ to it.
    We also observe that $v^\perp$ inherits from $\widetilde{H}(S,\Z)$ a Hodge structure of weight two in the obvious fashion. In addition, we claim that
    \begin{equation}
        \label{eq:symplectic_form_in_orthogonal_lattice}
        \text{ if } \, \ell \in \NS(S) \simeq \Pic(S) \simeq H^2(S,\Z) \cap H^{1,1}(S) , \text{ then } \, \sigma \in (v^\perp)_\C \, .
    \end{equation}
    Indeed, due to \autoref{rem:orthogonality_BBF_form} and since the integral BBF form of $S$ coincides with the intersection product on $H^2(S,\Z)$, we infer that $\sigma \in H^{2,0}(S)$ is orthogonal -- with respect to $\langle \cdot , \cdot \rangle_\C$ and under its identification with the vector $(0,\sigma,0) \in \widetilde{H}(S,\C)$ -- to $\ell \in \Pic(S)$, and hence to the given Mukai vector $v = (r, \ell, s) \in \widetilde{H}(S,\Z)$.

    \medskip
    
    Given a coherent sheaf $\mathcal{F}$ on a K3 surface $S$, we can naturally attach to it a Mukai vector by setting
    \[
         v(\mathcal{F}) \coloneqq \operatorname{ch}(\mathcal{F}) \sqrt{\operatorname{td}(S)} = \big( \rk (\mathcal{F}), \, c_1(\mathcal{F}), \, \frac{1}{2}c_1(\mathcal{F})^2 -c_2(\mathcal{F}) + \rk(\mathcal{F}) \big) \in \widetilde{H}(S,\Z) , 
    \]
    where $\frac{1}{2}c_1(\mathcal{F})^2 -c_2(\mathcal{F}) = \operatorname{ch}_2(\mathcal{F})$. By the Hirzebruch--Riemann--Roch formula, we have
    $\chi(\mathcal{F}) = \operatorname{ch}_2(\mathcal{F}) + 2\rk(\mathcal{F})$
    and hence the third component of $v(\mathcal{F})$ is often written as $\chi(\mathcal{F}) - \rk (\mathcal{F})$. 
    By construction, the associated Mukai vector $v(\mathcal{F})=(r, \ell, s)$ is of type $(1,1)$ and satisfies one of the following conditions:
    \begin{enumerate}[(1)]
      \item $r>0$;
      
      \item $r=0$ and $\ell \in H^{1,1}(S,\Z)$ is the first Chern class of a strictly effective divisor; or
      
      \item $r = \ell = 0$ and $s>0$.
    \end{enumerate}
    In view of the previous observations, a nonzero element $v = (r, \ell, s) \in \widetilde{H}(S,\Z)$ with $\ell \in \NS(S)$ satisfying $v^2 \geq 2$ and any of the above three conditions is called a \emph{positive} Mukai vector.
    
    Finally, for the definition and the comparison of the notions of \emph{$v$-generic} and \emph{$v$-general} polarizations $H \in \Amp(S)$ with respect to a positive Mukai vector $v \in \widetilde{H}(S,\Z)$, which will appear in the following paragraph as well as in Subsection \ref{subsection:deforming_PEV_from_moduli}, we refer to \cite[Subsection 2.1]{PeregoRapagnetta23}. In particular, due to \cite[Lemmas 2.9, 2.10 and 2.12]{PeregoRapagnetta23}, we will only consider $v$-generic polarizations in this paper; see also \cite[Remark 2.13]{PeregoRapagnetta23}.
    
    \medskip

    \noindent \emph{Moduli spaces of semistable sheaves on K3 surfaces}: 
    Let $S$ be a projective K3 surface. According to \cite[Theorem 1.10]{PeregoRapagnetta23}, if $v \in \widetilde{H}(S,\Z)$ is a positive Mukai vector and if $H \in \Amp(S)$ is a $v$-generic polarization, then the moduli space $M_S(v,H)$ of Gieseker $H$-semistable sheaves on $S$ with Mukai vector $v$ is a projective irreducible symplectic variety of dimension $v^2 +2 \geq 4$ whose smooth locus $M^s_S(v,H)$ is the moduli space of Gieseker $H$-stable sheaves on $S$ with Mukai vector $v$. In particular, if $v$ is primitive, then by works of Mukai and Yoshioka we know that $M^s_S(v,H) = M_S(v,H)$ is a projective IHS manifold which is deformation equivalent to the Hilbert scheme $\Hilb^n(S)$ of $n$ points on the K3 surface $S$.
    Finally, O'Grady \cite[Main Theorem]{OGrady97} and Perego--Rapagnetta \cite[Proposition 5.12]{PeregoRapagnetta24} showed that there also exists a Hodge isometry
    \begin{equation}
        \label{eq:Hodge_isometry_moduli}
        \theta \colon \big( v^\perp , \langle \cdot,\cdot\rangle |_{v^\perp} \big) \to \Big( H^2 \big( M_S(v,H) , \Z \big) , \, q_{M_S(v,H)} \Big) .
    \end{equation}
    In particular, the symplectic form on $M_S(v,H)$ arises from the symplectic form $\sigma$ on $S$; namely, $\theta_\C (\sigma) \in H^{2,0}\big( M_S(v,H) \big) $.

    \subsection{Primitive Enriques varieties}
    \label{subsection:PEVs}
    
    In this subsection we first give the definition and establish some basic properties of the main objects of study in this paper.
    
    \begin{dfn}[PEV]
        \label{dfn:PEV}
        A \emph{primitive Enriques variety} $Y$ is the quotient of a primitive symplectic variety $X$ by a finite group $G \subseteq \Aut(X)$ whose action on $X$ is nonsymplectic and free in codimension one.
    \end{dfn}

    A primitive Enriques variety $Y \simeq X/G$ is a normal, compact, K\"ahler variety with klt singularities and torsion canonical class; see \cite[Lecture 3]{Popp77book}, \cite[Section II, Corollary 3.2.1]{Varouchas89}, and \cite[Remark 2.6(i)]{DOTX24}. At first glance, \autoref{dfn:PEV} seems to be the K\"ahler version of our earlier definition of a projective primitive Enriques variety, see \cite[Definition 5.10]{DOTX24}. However, we will now show that the two definitions coincide; that is, primitive Enriques varieties and the (a priori K\"ahler) primitive symplectic varieties determining them are automatically projective.
     
    \begin{lem}
        \label{lem:PEVs_are_projective}
        Let $X$ be a primitive symplectic variety. Assume that $X$ admits a purely nonsymplectic automorphism $g \in \Aut(X)$ of finite order such that the group $G \coloneqq \langle g \rangle$ acts freely in codimension one on $X$. Then $X$ and $Y \coloneqq X / G$ are projective.
    \end{lem}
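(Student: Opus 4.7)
The plan is to produce a rational K\"ahler class on $X$, invoke a projectivity criterion for compact K\"ahler primitive symplectic varieties to deduce that $X$ is projective, and then conclude projectivity of the quotient $Y = X/G$ by standard GIT. The only subtle input is a Hodge-theoretic observation about $G$-invariant classes forced by the fact that $g$ is \emph{purely} nonsymplectic.

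First I would average a K\"ahler class. Since $X$ is a compact K\"ahler variety, its K\"ahler cone $\mathcal{K}_X \subseteq H^{1,1}(X)_\R$ is a nonempty open convex cone, and it is preserved by the action of $G \subseteq \Aut(X)$ since each $g \in G$ is a biholomorphism. Given any K\"ahler class $\omega$, the averaged class
\[
    \omega_G \coloneqq \frac{1}{|G|} \sum_{h \in G} h^* \omega
\]
therefore lies in $\mathcal{K}_X \cap H^2(X,\R)^G$, so this intersection is nonempty.

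Next I would identify $H^2(X,\R)^G$ with (a subspace of) $H^{1,1}(X)_\R$. Because $g^{[*]}\sigma = \xi \sigma$ with $\xi$ a primitive $d$-th root of unity, $H^{2,0}(X) = \C \cdot \sigma$ lies in the weight space $H^2(X,\C)_1$, and taking complex conjugates, $H^{0,2}(X) \subseteq H^2(X,\C)_{-1}$. Since the $G$-action commutes with the Hodge decomposition \eqref{eq:Hodge_decomposition_PSV}, one concludes $H^{2,0}(X)_0 = H^{0,2}(X)_0 = 0$, and hence $H^2(X,\C)_0 = H^{1,1}(X)_0$ and $H^2(X,\R)^G \subseteq H^{1,1}(X)_\R$.

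Then I would use a standard density argument. The averaging operator $\frac{1}{|G|}\sum_{h \in G} h^*$ is a $\Q$-linear projection $H^2(X,\Q) \twoheadrightarrow H^2(X,\Q)^G$, which exhibits $H^2(X,\Q)^G$ as a $\Q$-structure on $H^2(X,\R)^G$ and hence as a dense subset of it. Since $\mathcal{K}_X \cap H^2(X,\R)^G$ is a nonempty open subset of $H^2(X,\R)^G$, it must contain some rational class $\alpha \in H^2(X,\Q)^G \subseteq H^{1,1}(X) \cap H^2(X,\Q)$; such an $\alpha$ is a rational K\"ahler class on $X$.

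Finally, the existence of a rational K\"ahler class forces $X$ to be projective: after rescaling, $\alpha$ is an integral $(1,1)$-class whose image in $H^2(X,\OO_X) \simeq H^{0,2}(X)$ vanishes, hence by the exponential sequence it is the first Chern class of a line bundle $L$ on $X$. The projectivity criterion for compact K\"ahler primitive symplectic varieties (which applies since $X$ has rational, in fact klt, singularities by \cite[Theorem 3.4(1)]{BL22}) then yields that $L$ is ample and $X$ is projective. Since $G$ acts on the projective variety $X$ by (necessarily projective) automorphisms, the geometric quotient $Y = X/G$ exists as a projective variety, completing the proof. The only step that requires delicate input from outside the elementary averaging/density argument is the last one, namely the appeal to a singular Kodaira-type criterion; everything else is formal once one observes that pure nonsymplecticity confines $G$-invariants to the $(1,1)$-part.
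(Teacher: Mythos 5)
Your proposal is correct in substance but runs in the opposite direction from the paper. The paper first proves that $Y$ is projective, by observing that $h^2(Y,\OO_Y) = h^0\big(Y,\Omega_Y^{[2]}\big) = 0$ (pure nonsymplecticity kills all $G$-invariant reflexive $2$-forms on $X$) and then invoking Graf's Kodaira-type projectivity criterion \cite[Proposition 4.10]{Graf18} for normal compact K\"ahler spaces with rational singularities; projectivity of $X$ is then deduced by pulling back an ample line bundle along the finite quotient map $\gamma \colon X \to Y$. You instead prove projectivity of $X$ first, via a $G$-equivariant version of the classical argument: average a K\"ahler class, note that pure nonsymplecticity forces $H^2(X,\R)^G \subseteq H^{1,1}(X)_\R$, and use density of $H^2(X,\Q)^G$ to produce a rational K\"ahler class; the quotient $Y$ is then projective since finite quotients of projective varieties are projective. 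The underlying Hodge-theoretic input is identical in both arguments (namely $H^{2,0}(X)_0 = H^{0,2}(X)_0 = 0$), so the two routes are morally equivalent; what your version loses is that it must re-derive, by hand, the singular K\"ahler geometry that Graf's criterion packages. In particular, two of your steps are genuinely nontrivial in the singular setting and need citations rather than being treated as formal: (a) the openness of the K\"ahler cone inside $H^{1,1}(X)_\R$ for a (possibly singular) primitive symplectic variety, which requires knowing that classes in $H^{1,1}(X)_\R$ are represented by forms with local potentials (this is established in \cite{BL22} but is not automatic); and (b) the passage from a rational K\"ahler class to an ample line bundle, which is Grauert's ampleness criterion together with the fact that the exponential sequence and the pure Hodge structure on $H^2$ interact as expected on a space with rational singularities. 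With those references supplied, your argument is a valid and self-contained alternative; without them, it silently assumes exactly the content of the criterion the paper cites.
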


    \begin{proof}
        Recall that $Y \simeq X/G$ is a normal, compact, K\"ahler variety with klt, and hence rational, singularities. By \cite[Proposition 5.13(iii)]{DOTX24} and \autoref{prop:Hodge_duality} we obtain
        \[ 
            h^2(Y, \OO_Y) = h^0 \big(Y, \Omega_Y^{[2]} \big) = 0 .
        \]
        Therefore, \cite[Proposition 4.10]{Graf18} implies now that $Y$ is projective, so it admits an ample line bundle $\mathcal{L}$.
        Observe that the quotient map $\gamma \colon X \to Y$ is finite. Thus, using the Finite Mapping Theorem \cite[pp.\ 64-65]{GrauertRemmert84book}, the projection formula \cite[Chapter III, Exercise 8.3]{Har77book} and the relative ampleness criterion \cite[p.\ 25, (1) $\iff$ (4)]{Nak04} for the line bundle $\gamma^* \mathcal{L}$ and the composite map $X \to Y \to \Spec (\C)$, we deduce that $\gamma^* \mathcal{L}$ is an ample line bundle on $X$, so $X$ is also projective, as claimed.
    \end{proof}
    
    \begin{rem}~
        \begin{enumerate}[(1)]
            \item The projectivity assumption that is included tacitly in the statement of \cite[Proposition 5.13(iii)]{DOTX24} does not play any role in its proof, so this result can indeed be applied in the K\"ahler setting of \autoref{lem:PEVs_are_projective}.

            \item If $\dim X \geq 4$, then the hypothesis in \autoref{lem:PEVs_are_projective} that $G$ acts freely in codimension one on $X$ is automatically satisfied by \cite[Proposition 4.13]{DOTX24}.
        \end{enumerate}
    \end{rem}

    \subsubsection{Markings and period domains for PEVs}
    
    We now define a notion of marking for primitive Enriques varieties. To this end, let $(L,q)$ be an abstract nondegenerate lattice with signature of type $(3,*)$, endowed with an orthogonal $\mu_d(\C)$-action.
    We further impose the condition that the induced Hermitian form on the weight space $L_{\C,1}\coloneqq (L\otimes_{\Z}\C)_1$ is nondegenerate with signature of type $(2,*)$ for $d=2$ and $(1,*)$ for $d\geq 3$. 
    
    Consider a primitive Enriques variety $Y \simeq X/G$, where $X$ is a primitive symplectic variety and $G \subseteq \Aut(X)$ is a finite group of order $d \geq 2$, identified with $\mu_d(\C)$, which acts nonsymplectically and freely in codimension one on $X$. In view of \autoref{lem:hermitian_form_PSV} and the fact that the BBF lattice $\big( H^2(X,\Z)_{\text{tf}} \, , q_X \big)$ of $X$ is endowed with an orthogonal $\mu_d(\C)$-action by \eqref{eq:BBF_form_invariance}, an $L$-\emph{marking} of $Y$ is a $\mu_d(\C)$-equivariant lattice isomorphism 
    \[ 
        \phi \colon \big( H^2(X,\Z)_{\mathrm{tf}} \, , q_X \big) \to (L, q) ,
    \]
    and the \emph{$L$-marked primitive Enriques variety} is denoted by $(Y,\phi)$.
    
    \begin{dfn}
        \label{dfn:period_data}
        In the above setting, we also give the following definitions:
        \begin{enumerate}[(1)]
            \item The \emph{period domain} for $L$-marked primitive Enriques varieties is defined to be
            \[
                \mathcal{D}_L \coloneqq \big\{ [\omega]\in \P(L_{\C,1}) \mid q(\omega) = 0, \ q(\omega, \overline{\omega}) > 0 \big \} .
            \]
            
            \item The \emph{period point} of an $L$-marked primitive Enriques variety $(Y,\phi)$ is defined to be the point $[\phi(\sigma)] \in \mathcal{D}_L$.
        \end{enumerate}
    \end{dfn}
    
    We comment briefly on the previous definition. In the first item above, $\overline{\omega}$ denotes the complex conjugate of $\omega$ inside the complexification $L_\C = L \otimes_\Z \C$. In the second item above, slightly abusing notation, $\phi$ also denotes the complexification of the given lattice isomorphism and $\sigma \in H^{2,0}(X) \subseteq H^2(X,\C)_1$ is the cohomology class of the (unique up to scalar) symplectic form on $X$. Since $q_X(\sigma) = 0$ and $q_X(\sigma,\overline{\sigma}) > 0$ by \eqref{eq:BBF_form_evaluation_1} and \eqref{eq:quadratic_form}, we indeed have that $[\phi(\sigma)] \in \mathcal{D}_L$, as asserted.
    
    Finally, note that the period domains $\mathcal{D}_L$ inside $\P(L_{\C,1})$ are locally closed with respect to the classical topology, and hence inherit the structure of a complex manifold. In fact, their precise description as \emph{bounded symmetric domains} is given in \cite[Proposition 2.2]{OS11b}, which remains valid also in our setting with an identical proof. In view of this and \autoref{prop:deform_PEV_from_Kum} below, see also \cite[Remark 3.6]{OS11b}.

    \subsection{Notions from deformation theory}

    We refer to \cite[Section 4]{BL22} for the basic notions of the deformation theory of compact complex spaces, see also \cite{Grauert74,FK87,Sernesi06book,Gri18}. Here, we only recall the following definitions.

    \begin{dfn}
        \label{dfn:deformation}
        A \emph{deformation} $\mathscr{D}$ of a compact complex space $X$ is given by a flat and proper morphism $\pi\colon \mathscr{X}\to S$ of complex spaces together with a distinguished point $0\in S$ and an isomorphism $\alpha \colon \mathscr{X}_0 \coloneqq \pi^{-1}(0) \to X$.
        
        We say that $\mathscr{D} = (\pi \colon \mathscr{X} \to S, \, \alpha\colon \mathscr{X}_0\to X)$ is \emph{locally trivial at $0 \in S$} if for every $p \in \mathscr{X}_0 \simeq X$ there exist open neighborhoods $\mathscr{U}$ of $p$ in $\mathscr{X}$ and $V$ of $0$ in $S$ such that $\mathscr{U} \simeq U \times V$, where $U \coloneqq \mathscr{U} \cap \mathscr{X}_0$. Finally, we say that $\mathscr{D}$ is \emph{locally trivial} if it is locally trivial at each point $s \in S$.
    \end{dfn}
    
    \begin{dfn}
        \label{dfn:notions_of_versality}
        A deformation $(\mathscr{X}\to S, \, \alpha\colon \mathscr{X}_0\to X)$ of a compact complex space $X$ is called \emph{versal} if for every deformation $(\mathscr{X}'\to S', \, \beta \colon \mathscr{X}'_0\to X)$ of $X$ there exist a map $\varphi \colon (S',0)\to (S,0)$ of germs of complex spaces and an isomorphism $\mathscr{X}'\to \mathscr{X}\times_S S'$ such that the following diagrams commute:
        \begin{center}
            \begin{tikzcd}
                \mathscr{X'} \arrow[r, "\sim"] \arrow[rd] \arrow[rr, "\psi", bend left] & \mathscr{X}\times_S S' \arrow[r] \arrow[d] & \mathscr{X} \arrow[d] \\
                 & S' \arrow[r, "\varphi"] & S ,
            \end{tikzcd}
            \qquad
            \begin{tikzcd}
                \mathscr{X}'_0 \arrow[rd, "\beta"'] \arrow[rr, "\psi_0"] & & \mathscr{X}_0 \arrow[ld, "\alpha"] \\
                & X .                         
            \end{tikzcd}
        \end{center}
        A versal deformation is called \emph{miniversal} if the differential $d\varphi_0 \colon T_0 S' \to T_0 S$ of $\varphi$ is uniquely determined, and \emph{universal} if moreover the map $\varphi$ is unique.

        The different notions of versality are defined analogously for \emph{locally trivial} deformations of $X$.
    \end{dfn}

    \begin{dfn}
        \label{dfn:automorphisms_of_deformation}
        An \emph{automorphism of a deformation} $\mathscr{D} = (\mathscr{X} \to S, \, \alpha\colon \mathscr{X}_0\to X)$ of a compact complex space $X$ is an $S$-automorphism $\psi \colon \mathscr{X} \to \mathscr{X}$ such that $\psi |_{\mathscr{X}_0} = \Id_{\mathscr{X}_0}$. We denote by $\Aut(\mathscr{D})$ the \emph{automorphism group} of $\mathscr{D}$.
    \end{dfn}
    
    Any compact complex space $X$ admits a miniversal deformation $\mathscr{X} \to \Def(X)$ by \cite[Hauptsatz, p.\ 140]{Grauert74}, called the \emph{Kuranishi family} of $X$, as well as a locally trivial miniversal deformation $\mathscr{X} \to \Deflt(X)$ by \cite[Corollary 0.3]{FK87}, called the \emph{locally trivial Kuranishi family} of $X$. Note that $\Deflt(X)$ is a closed complex subspace of $\Def(X)$ by the proof of \cite[Corollary 0.3]{FK87}.

    \subsubsection{Auxiliary results}

    The content of the following result should be well known to the experts.
    
    \begin{prop}
        \label{prop:extension_of_line_bundles}
        Let $\pi \colon \mathscr{X} \to S$ be a flat and proper morphism of complex spaces with reduced fibers $\mathscr{X}_s \coloneqq \pi^{-1}(s)$, $s \in S$, where $\mathscr{X}$ (and hence $S$) is also reduced. Assume that $\pi$ is \emph{topologically locally trivial}, i.e., the underlying continuous map is a topological fiber bundle, 
        and that $R^2 \pi_* \OO_{\mathscr{X}} = 0 $.
        Given $t \in S$ and after sufficiently shrinking $S$ around $t$, the following statements hold: 
        \begin{enumerate}[\normalfont (i)]
            \item If $L_t$ is a holomorphic line bundle on $\mathscr{X}_t$, then there exists a holomorphic line bundle $\mathcal{L}$ on $\mathscr{X}$ such that $c_1 \big( \mathcal{L} |_{\mathscr{X}_t} \big) = c_1(L_t)$.

            \item If, additionally, $R^1\pi_*\OO_{\mathscr{X}}=0$, then the lift in \emph{(i)} is unique (up to isomorphism).
        \end{enumerate}
    \end{prop}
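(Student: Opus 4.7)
The plan is to combine the exponential short exact sequence on $\mathscr{X}$ with a Leray spectral sequence computation on $S$, transferring integral cohomology classes between the fiber $\mathscr{X}_t$ and the total space via the assumed topological triviality. First I would shrink $S$ around $t$ to a contractible Stein neighborhood (a small polydisc suffices). By topological local triviality, after this shrinking the underlying continuous map $\mathscr{X}\to S$ is a trivial topological fiber bundle, so $\mathscr{X}$ deformation retracts onto $\mathscr{X}_t$ and the restriction map $H^k(\mathscr{X},\Z)\to H^k(\mathscr{X}_t,\Z)$ is an isomorphism in every degree. Let $c\in H^2(\mathscr{X},\Z)$ denote the preimage of $c_1(L_t)$.

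Next I would compute $H^2(\mathscr{X},\OO_{\mathscr{X}})$ via the Leray spectral sequence
\[
    E_2^{p,q} = H^p \big( S, R^q\pi_* \OO_{\mathscr{X}} \big) \Rightarrow H^{p+q}(\mathscr{X}, \OO_{\mathscr{X}}).
\]
The hypothesis $R^2\pi_*\OO_{\mathscr{X}}=0$ kills $E_2^{0,2}$, while Grauert's coherence theorem combined with Cartan's Theorem B on the Stein base $S$ kills $E_2^{1,1}=H^1\big(S,R^1\pi_*\OO_{\mathscr{X}}\big)$ and $E_2^{2,0}=H^2(S,\pi_*\OO_{\mathscr{X}})$. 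Hence $H^2(\mathscr{X},\OO_{\mathscr{X}})=0$, and the long exact sequence associated to $0\to\Z_{\mathscr{X}}\to\OO_{\mathscr{X}}\to\OO_{\mathscr{X}}^*\to 0$ delivers a surjection $\Pic(\mathscr{X})\twoheadrightarrow H^2(\mathscr{X},\Z)$. Taking $\mathcal{L}$ to be any lift of $c$, naturality of the first Chern class under restriction gives $c_1(\mathcal{L}|_{\mathscr{X}_t})=c|_{\mathscr{X}_t}=c_1(L_t)$, which proves (i).

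For (ii) I would repeat the Leray argument one degree lower: under the additional vanishing $R^1\pi_*\OO_{\mathscr{X}}=0$, the same reasoning yields $H^1(\mathscr{X},\OO_{\mathscr{X}})=0$, so the Chern class map $c_1\colon\Pic(\mathscr{X})\to H^2(\mathscr{X},\Z)$ is also injective. Any two extensions $\mathcal{L},\mathcal{L}'$ of $L_t$ then satisfy $c_1(\mathcal{L})=c_1(\mathcal{L}')$, since both restrict to $c_1(L_t)$ on $\mathscr{X}_t$ and the restriction $H^2(\mathscr{X},\Z)\to H^2(\mathscr{X}_t,\Z)$ is an isomorphism by the first step; injectivity of $c_1$ then forces $\mathcal{L}\simeq\mathcal{L}'$.

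The main bookkeeping obstacle is to organize the shrinking of $S$ so that simultaneously (a) $S$ is contractible and the topological fiber bundle trivializes, (b) $S$ is Stein so that Theorem B applies to the coherent sheaves $\pi_*\OO_{\mathscr{X}}$ and $R^1\pi_*\OO_{\mathscr{X}}$, and (c) the vanishing hypotheses on the higher direct images descend to the shrunk base. Each of these is routine — polydiscs are both Stein and contractible, and higher direct images restrict to open subsets — but they must be coordinated before invoking the two spectral-sequence vanishings.
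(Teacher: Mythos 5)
Your proposal is correct and follows essentially the same route as the paper: shrink $S$ to a contractible Stein neighborhood, kill $H^2(\mathscr{X},\OO_{\mathscr{X}})$ (and, for (ii), $H^1(\mathscr{X},\OO_{\mathscr{X}})$) via the Leray spectral sequence using Grauert coherence and Cartan's Theorem B, and conclude with the exponential sequence together with the isomorphism $H^2(\mathscr{X},\Z)\simeq H^2(\mathscr{X}_t,\Z)$ coming from topological local triviality. The only cosmetic difference is that you phrase the uniqueness in (ii) via injectivity of $c_1$ on $\Pic(\mathscr{X})$, which is an equivalent reformulation of the paper's identification $H^1(\mathscr{X},\OO^*_{\mathscr{X}})\cong H^2(\mathscr{X},\Z)$.
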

    
    \begin{proof}~

        \medskip

        \noindent (i) 
        By shrinking $S$, we can assume that it is Stein. First, we show that 
        \begin{equation}
            \label{eq:vanishingH2}
            H^2(\mathscr{X},\OO_{\mathscr{X}}) = 0 .
        \end{equation}
        Indeed, the Leray spectral sequence applied to the morphism $\pi$ and the sheaf $\OO_\mathscr{X}$ yields
        the following exact sequence
        \[
           H^2 \big( S,\pi_*\OO_{\mathscr{X}}) \to \mathrm{ker} \left(H^2(\mathscr{X},\OO_{\mathscr{X}}) \to H^0(S,R^2\pi_*\OO_{\mathscr{X}})\right) \to H^1(S,R^1\pi_*\OO_{\mathscr{X}}).
        \]
        Since the analytic sheaves $R^i\pi_* \OO_{\mathscr{X}}$, $i \geq 0$, are coherent by the Direct Image Theorem \cite[p.\ 207, Theorem]{GrauertRemmert84book} and since the space $S$ is Stein, we have 
        \[
            H^1(S, R^1\pi_*\OO_{\mathscr{X}}) = 0 = H^2(S,\pi_*\OO_{\mathscr{X}}) .
        \]
        Thus, \eqref{eq:vanishingH2} follows immediately from the above and the hypothesis that $R^2 \pi_* \OO_{\mathscr{X}} = 0 $.

        Next, we consider the exponential sequence of the reduced complex space $\mathscr{X}$,
        which gives, together with \eqref{eq:vanishingH2}, the following exact sequence in cohomology:
        \begin{equation}
            \label{eq:expsequence}
            H^1(\mathscr{X}, \OO_{\mathscr{X}}) \to H^1(\mathscr{X}, \OO^*_{\mathscr{X}}) \xrightarrow{c_1} H^2(\mathscr{X}, \Z) \to  H^2(\mathscr{X}, \OO_{\mathscr{X}}) = 0 .
        \end{equation}
        Since $\pi \colon \mathscr{X} \to S$ is topologically locally trivial, after shrinking $S$ even further so that it becomes contractible, we infer that for any $s \in S$ there is an isomorphism
        \[
            H^2(\mathscr{X}, \Z) \simeq H^2(\mathscr{X}_s,\Z) .
        \]
        Therefore, the class $c_1(L_t) \in H^2 ( \mathscr{X}_t, \Z)$ of the given holomorphic line bundle $L_t$ on $\mathscr{X}_t$ lifts to a class $\alpha \in H^2(\mathscr{X}, \Z)$ on the total space $\mathscr{X}$, which in turn satisfies $\alpha=c_1(\mathcal{L})$ for some holomorphic line bundle $\mathcal{L}$ on $\mathscr{X}$ due to \eqref{eq:expsequence}.
        
        \medskip

        \noindent (ii)
        Since $R^1\pi_*\OO_{\mathscr{X}}=0$ by assumption, the five-term exact sequence induced by the Leray spectral sequence now gives $H^1(\mathscr{X},\OO_{\mathscr{X}})=0$. Hence, $H^1(\mathscr{X},\OO^*_{\mathscr{X}})\cong H^2(\mathscr{X},\Z)$ by \eqref{eq:expsequence}, which implies that any holomorphic line bundle $L_t$ on $\mathscr{X}_t$ uniquely lifts to $\mathscr{X}$, up to isomorphism. This completes the proof.
    \end{proof}

    \begin{rem}
        \label{rem:stronger_cond}
        Let $\pi \colon \mathscr{X} \to S$ be a flat and proper morphism of complex spaces. 
        If
        \[
            H^i (\mathscr{X}_s, \OO_{\mathscr{X}_s}) = 0 \, \text{ for all } \, s \in S ,
        \]
        then 
        \[ 
            R^i \pi_* \OO_{\mathscr{X}} = 0 
        \]
        by \cite[Chapter III, Corollary 3.5]{BS76}.
    \end{rem}
    
    Recall that a proper morphism $X \to Y$ between 
    complex spaces is called \emph{projective} if there exists a relatively ample line bundle on $X$ over $Y$; see \cite[Definition II.1.10]{Nak04}. We now derive the following application of \autoref{prop:extension_of_line_bundles} and \autoref{rem:stronger_cond}, which will play a key role in Subsection \ref{subsection:deforming_PEV_from_moduli}.
    
    \begin{cor}
        \label{cor:local_projectivity}
        Let $\pi \colon \mathscr{X} \to (S,0)$ be a \emph{smooth family}, i.e., a smooth and proper morphism between connected complex spaces whose fibers are compact, connected, complex manifolds. Assume that $\mathscr{X}$ (and hence $S$) 
        is reduced and that $\mathscr{X}_0 = \pi^{-1}(0)$ is K\"ahler with $H^2(\mathscr{X}_0, \OO_{\mathscr{X}_0}) = 0$. Then there exists a sufficiently small neighborhood $V$ of $0$ in $S$ such that the restricted morphism $\pi |_{\pi^{-1}(V)} \colon \pi^{-1}(V) \to V$ is projective.
    \end{cor}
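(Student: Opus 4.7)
The plan is to first upgrade $\mathscr{X}_0$ to a projective variety, then extend an ample line bundle from $\mathscr{X}_0$ to the total space, and finally invoke the openness of (relative) ampleness to descend to a small neighborhood of $0$ in $S$. For the first step, $\mathscr{X}_0$ is a compact K\"ahler manifold with $H^2(\mathscr{X}_0,\OO_{\mathscr{X}_0}) = 0$, so Hodge theory yields $H^2(\mathscr{X}_0,\R) = H^{1,1}(\mathscr{X}_0)_\R$; consequently, any K\"ahler class on $\mathscr{X}_0$ can be perturbed to an integral K\"ahler class, and the Kodaira embedding theorem provides an ample line bundle $L_0$ on $\mathscr{X}_0$.

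For the second step, I would use Grauert's upper semicontinuity theorem applied to $\OO_{\mathscr{X}}$ to shrink $S$ around $0$ so that $H^2(\mathscr{X}_s,\OO_{\mathscr{X}_s}) = 0$ for every $s\in S$, from which \autoref{rem:stronger_cond} gives $R^2\pi_*\OO_{\mathscr{X}} = 0$. Since $\pi$ is smooth and proper, a relative version of Ehresmann's fibration theorem (in the complex-analytic category) guarantees that, after shrinking $S$ to a contractible neighborhood of $0$, $\pi$ is topologically locally trivial in the sense required by \autoref{prop:extension_of_line_bundles}. Applying that proposition to $L_0$ produces a line bundle $\mathcal{L}$ on $\mathscr{X}$ with $c_1(\mathcal{L}|_{\mathscr{X}_0}) = c_1(L_0)$; as $\mathcal{L}|_{\mathscr{X}_0}$ differs from $L_0$ by a topologically, hence numerically, trivial line bundle, the Nakai--Moishezon (equivalently, Kleiman) criterion ensures that $\mathcal{L}|_{\mathscr{X}_0}$ is again ample.

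For the final step, I would invoke the openness of relative ampleness in proper flat families of complex spaces (see for instance \cite{Nak04}): since $\mathcal{L}$ is ample on the fiber over $0$, there is a neighborhood $V$ of $0$ in $S$ such that $\mathcal{L}|_{\pi^{-1}(V)}$ is $\pi$-ample, and hence $\pi|_{\pi^{-1}(V)}\colon \pi^{-1}(V)\to V$ is projective, as desired. The main obstacles are verifying the topological local triviality of $\pi$ after shrinking (which requires an Ehresmann-type statement for smooth proper analytic morphisms) and citing the openness of relative ampleness in the analytic setting; once both ingredients are in place, the three steps combine immediately to yield the conclusion.
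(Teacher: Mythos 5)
Your proposal is correct and follows essentially the same route as the paper: Kodaira embedding on the central fiber, semicontinuity to get $R^2\pi_*\OO_{\mathscr{X}}=0$, Ehresmann for topological local triviality, extension of the ample line bundle via \autoref{prop:extension_of_line_bundles}, and openness of ampleness to conclude. The only (cosmetic) difference is in the last step, where the paper justifies openness of ampleness via openness of the K\"ahler condition in smooth families together with Kodaira's embedding theorem, rather than citing openness of relative ampleness directly.
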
    
    
    \begin{proof}
        Note first that the given smooth family is topologically locally trivial by \cite[Theorem 14.5]{GHJ03book}.
        Moreover, since the central fiber $\mathscr{X}_0$ is by assumption a compact K\"ahler manifold such that $h^{0,2}(\mathscr{X}_0) = h^{2,0}(\mathscr{X}_0) = 0$, it is actually projective by Kodaira's embedding theorem, 
        so it admits an ample line bundle $H_0$. On the other hand, by the upper semi-continuity of the function $t \mapsto H^2(\mathscr{X}_s,\OO_{\mathscr{X}_s})$, see \cite[Chapter III, Theorem 4.12(i)]{BS76}, and after shrinking $S$, we infer that $H^2 (\mathscr{X}_s, \OO_{\mathscr{X}_s}) = 0$ for all $s \in S$.
        It follows now from \autoref{prop:extension_of_line_bundles} and \autoref{rem:stronger_cond} that, after possibly further shrinking $S$, $H_0$ lifts to a line bundle $\mathcal{H}$ on $\mathscr{X}$ such that $c_1 \big( \mathcal{H} |_{\mathscr{X}_0} \big) = c_1(H_0)$. But ampleness is an open condition in smooth families. Indeed, being Kähler is an open condition in such families; see \cite[Proposition 22.2(i)]{GHJ03book}. In particular, small perturbations of integral Kähler classes are represented by ample line bundles due to Kodaira's embedding theorem.
        Therefore, up to shrinking $S$, the line bundle $\mathcal{H}$ on $\mathscr{X}$ is relatively ample over $S$.
    \end{proof}

    The following observation will only be used for the special case of an Enriques surface in Subsection \ref{subsection:deforming_PEV_from_moduli},
    where a relative moduli space construction will be discussed.
    
    \begin{rem}
        \label{rem:local_projectivity_family_of_EMs}
        If $\big( \pi_Y \colon \mathscr{Y} \to \Def(Y) , \, \mathscr{Y}_0 \simeq Y \big)$ is the universal Kuranishi family of an Enriques manifold $Y$, then \autoref{cor:local_projectivity} applies to $\pi_Y$, showing that it restricts to a smooth projective morphism over some small open neighborhood of $0 \in \Def(Y)$. 
        Indeed, the morphism $\pi_Y \colon \mathscr{Y} \to \Def(Y)$ is smooth and proper by Kuranishi's theorem, see \cite[Einleitung, 1.]{Grauert74}. Since $\Def(Y)$ is smooth by \cite[Proposition 1.2]{OS11b}, $\pi_Y$ is also locally trivial in the sense of \autoref{dfn:deformation}, which implies that $\pi_Y$ is in particular topologically locally trivial and that $\mathscr{Y}$ is reduced as well.
        The remaining conditions in \autoref{cor:local_projectivity} are satisfied, after sufficiently shrinking the Kuranishi space $\Def(Y)$, due to \cite[Proposition 2.6 and Corollary 2.7]{OS11a} and \cite[Proposition 1.2]{OS11b}.
    \end{rem}

    \subsubsection{Deformations and group actions}

    Given a compact complex space $X$ endowed with a holomorphic group action $G$, we consider the $G$-equivariant deformations of $X$, namely, a usual deformation $(\mathscr{X}\to S, \mathscr{X}_0 \xrightarrow{\sim} X )$ of $X$ equipped with a holomorphic group action on $\mathscr{X}$ over $S$, which extends the given $G$-action on $X$.

    When $X$ is an algebraic variety, Rim showed in \cite[Corollary,  p.\ 225]{Rim80} the existence of an equivariant $G$-structure on the miniversal formal deformation of $X$, extending the given $G$-action on $X$, under the hypotheses that $X$ is complete or affine with only a finite number of singularities, and $G$ is a linearly reductive group.

    In the analytic setting, Cathelineau established in \cite[Théorème 1]{Cathelineau78} the existence of a miniversal $G$-equivariant deformation of a given compact complex space $X$ endowed with an action of a group $G$ satisfying further properties, see \cite[Définition, p.\ 392]{Cathelineau78}. When $X$ is in particular a compact complex manifold, Doan proved the following analog of Rim's result:
    
    \begin{thm}[= {\cite[Theorem 5.2]{Doan22}}]
        \label{thm:Doan}
        Let $\mathscr{X}\to S$ be the Kuranishi family of a compact complex manifold $X$ endowed with a holomorphic action of a complex reductive Lie group $G$. Then there exist local equivariant $G$-actions on $\mathscr{X}$ over $S$ extending the given $G$-action on $X$.
    \end{thm}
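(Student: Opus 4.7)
The plan is to construct the local $G$-equivariant action on $\mathscr{X}$ by first fixing a maximal compact subgroup $K \subset G$ and running Kuranishi's construction in a $K$-equivariant manner, then invoking the unitary trick/complete reducibility to upgrade the $K$-equivariance to $G$-equivariance. Since $G$ is complex reductive, we have $G \simeq K_{\C}$, so it suffices to produce the action for the compact group $K$ and propagate.

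First, I would average an arbitrary Hermitian metric on $X$ over $K$ (using Haar measure) to obtain a $K$-invariant Hermitian metric. With respect to this metric, the $\bar\partial$-Laplacian $\Box$ on the Dolbeault complex $A^{0,\bullet}(X,\mathscr{T}_X)$, its Green operator $\mathcal{G}$, the harmonic projection $\mathbb{H}$, the formal adjoint $\bar\partial^{*}$, and the Schouten bracket $[\cdot,\cdot]$ all commute with the induced $K$-action on forms with values in $\mathscr{T}_X$.

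Next I would recall Kuranishi's construction. The base $S$ is identified (as a germ) with the zero locus in $H^1(X,\mathscr{T}_X)$ of the Kuranishi obstruction map $\kappa(t) = \mathbb{H}[\varphi(t),\varphi(t)]$, where $\varphi(t)\in A^{0,1}(X,\mathscr{T}_X)$ is the unique solution of the integral equation
\[
\varphi(t) = t + \tfrac{1}{2}\,\bar\partial^{*}\mathcal{G}\bigl[\varphi(t),\varphi(t)\bigr], \qquad t \in \mathbb{H}^{1},
\]
whose integrability is equivalent to the Maurer--Cartan equation $\bar\partial\varphi + \tfrac{1}{2}[\varphi,\varphi]=0$. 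Because every ingredient in this equation is $K$-equivariant, the solution satisfies $\varphi(k\cdot t) = k\cdot\varphi(t)$ for $k\in K$, so $\kappa$ is $K$-equivariant and $S$ inherits a holomorphic $K$-action. The total space $\mathscr{X}$ is obtained by endowing $X\times S$ with the almost complex structure $J_{\varphi(t)}$, which by the equivariance of $\varphi$ is preserved by the diagonal $K$-action, yielding a holomorphic $K$-action on $\mathscr{X}$ over $S$ extending the given one on $X = \mathscr{X}_{0}$.

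The main obstacle is upgrading the $K$-action to a holomorphic $G$-action. At the level of tangent spaces, the $K$-representation on $T_{0}S \simeq H^{1}(X,\mathscr{T}_X)$ is unitary and extends uniquely to a holomorphic representation of $G = K_{\C}$; in fact the whole Kuranishi construction is polynomial in the harmonic representative, so the formulae for the action of $K$ on $\varphi(t)$ and on $J_{\varphi(t)}$ extend by complexification to formulae giving a formal $G$-action. The analytic subtlety, which is the real content of Doan's theorem, is to promote this formal extension to a genuine holomorphic action on a neighborhood of the central fiber; this is carried out by arguing order-by-order that the obstructions to $G$-equivariant lifts live in $G$-modules that vanish by linear reductivity (in the spirit of Rim \cite{Rim80}), and then invoking Kuranishi-type elliptic estimates -- now applied to the $G$-equivariant harmonic projection -- to ensure that the formal $G$-action converges to an analytic one on some open $S'\subseteq S$ containing $0$.
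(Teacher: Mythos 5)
A preliminary remark: the paper does not prove this statement at all --- it is imported verbatim as \cite[Theorem 5.2]{Doan22} and used as a black box (e.g.\ in the proof of \autoref{prop:Kuranishi_family_of_BS}), so your proposal can only be judged against Doan's argument and on its own merits.

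The first half of your proposal is correct and standard: averaging the Hermitian metric over a maximal compact subgroup $K\subset G$, noting that $\bar\partial^{*}$, the Green operator, the harmonic projection and the bracket then commute with the induced $K$-action, and concluding that $\varphi(t)$, the obstruction map, the base $S$ and the total space $\mathscr{X}$ are all $K$-equivariant. The genuine gap is in the passage from $K$ to $G=K_{\C}$. The claim that the construction is ``polynomial in the harmonic representative'' and therefore ``extends by complexification'' fails as stated: an element $g\in G\setminus K$ acts holomorphically on $X$ but does \emph{not} preserve the $K$-invariant metric, hence does not commute with $\mathcal{G}$, $\mathbb{H}$ or $\bar\partial^{*}$; consequently $(g^{-1})^{*}\varphi(t)$ is in general not $\bar\partial^{*}$-coclosed and is not equal to $\varphi\bigl(\rho(g)t\bigr)$ for the complexified linear action $\rho$ on $H^{1}(X,\mathscr{T}_X)$. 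At best the two Maurer--Cartan elements are gauge-equivalent, and producing that gauge transformation holomorphically in $t$ and compatibly with the group law is precisely the content you would need to supply; your closing paragraph replaces it with an order-by-order obstruction scheme and an unproved convergence assertion, which is a description of the difficulty rather than a proof. The established route is different and cleaner: once the holomorphic $K$-action on the complex space $\mathscr{X}$ over $S$ is in hand, its infinitesimal generators are holomorphic vector fields on (a neighborhood of the central fiber of) $\mathscr{X}$ projecting to $S$; the $\R$-linear map from $\mathfrak{k}$ to these vector fields extends $\C$-linearly to $\mathfrak{g}=\mathfrak{k}\oplus i\mathfrak{k}$, remains a Lie algebra homomorphism, and integrating the resulting holomorphic vector fields yields a \emph{local} $G$-action extending the given one on $X$. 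This also accounts for the word ``local'' in the statement, which your proposal never engages with: since $G$ is noncompact, elements far from the identity cannot preserve a small representative of the germ, so only an action of a neighborhood of $e\in G$ can be expected.
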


    One of the main problems of extending the given $G$-action on $X$ to the whole family $\mathscr{X}\to S$ is the following: when $X$ admits nonzero holomorphic vector fields, there is no canonical choice of such an extension. However, in the opposite case, we have a unique extension of the $G$-action to the universal deformation of $X$:

    \begin{lem}
        \label{lem:extending_group_action_to_deformation}
        Let $X$ be a compact complex space, let $G$ be a subgroup of $\Aut (X)$, and let $\mathscr{D} = (\pi\colon \mathscr{X}\to S, \, \alpha \colon \mathscr{X}_0\to X)$ be a versal deformation of $X$. 
        Assume that $\mathscr{D}$ is universal. Then there exists a group homomorphism 
        \[
            \Phi\colon G\to \Aut\big((S,0)\big), \ g \mapsto \varphi_g,
        \]
        where $\Aut\big((S,0)\big)$ denotes the group of germs of biholomorphisms that fix the marked point $0 \in S$.
        Assume moreover that $H^0(X, \mathscr{T}_X) = \{ 0 \}$.
        Then there exists a group homomorphism
        \[
            \Psi \colon G \to \Aut_S (\mathscr{X}), \ g \mapsto \psi_g,
        \]
        where $\Aut_S (\mathscr{X})$ denotes the group of $S$-automorphisms of the complex space $\mathscr{X}$.
    \end{lem}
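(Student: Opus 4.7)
The plan is to apply the universal property of $\mathscr{D}$ to the deformations of $X$ obtained by twisting the marking $\alpha$ by elements of $G$, and then to deduce the group-homomorphism properties from the uniqueness clauses in \autoref{dfn:notions_of_versality}.

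For the first assertion, given $g \in G$, consider the deformation
\[
\mathscr{D}_g \coloneqq \big(\pi \colon \mathscr{X}\to S,\ g\circ \alpha \colon \mathscr{X}_0 \to X\big)
\]
of $X$ over $(S,0)$. Universality of $\mathscr{D}$ provides a unique morphism of pointed germs $\varphi_g \colon (S,0)\to (S,0)$ together with an isomorphism of deformations $\bar{\psi}_g \colon \mathscr{D}_g \xrightarrow{\sim} \varphi_g^{*}\mathscr{D}$; the same uniqueness applied to $\mathscr{D}$ itself gives $\varphi_e = \mathrm{id}_S$. Observing that pullback commutes with the twisting action, i.e., $\varphi^{*}(g \cdot \mathscr{D}') \simeq g \cdot \varphi^{*}\mathscr{D}'$ for any deformation $\mathscr{D}'$ of $X$ and any morphism $\varphi$, one computes
\[
(\varphi_g \circ \varphi_h)^{*}\mathscr{D} = \varphi_h^{*}(\varphi_g^{*}\mathscr{D}) \simeq \varphi_h^{*}(g \cdot \mathscr{D}) \simeq g \cdot \varphi_h^{*}\mathscr{D} \simeq g \cdot (h \cdot \mathscr{D}) = (gh) \cdot \mathscr{D} \simeq \varphi_{gh}^{*}\mathscr{D}.
\]
The uniqueness of the classifying map then forces $\varphi_{gh} = \varphi_g \circ \varphi_h$, so $\Phi \colon g \mapsto \varphi_g$ is a group homomorphism; in particular, each $\varphi_g$ is an invertible germ with inverse $\varphi_{g^{-1}}$.

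For the second assertion, composing $\bar{\psi}_g$ with the natural projection $\mathscr{X} \times_{S,\varphi_g} S \to \mathscr{X}$ yields, for every $g \in G$, a biholomorphism $\psi_g \colon \mathscr{X} \to \mathscr{X}$ lying above $\varphi_g$ and restricting, under the identification $\alpha \colon \mathscr{X}_0 \simeq X$, to $g$ on the central fiber. The additional hypothesis $H^0(X, \mathscr{T}_X) = \{0\}$ ensures the uniqueness of such a lift: if $\psi_g, \psi_g'$ were two lifts, then $\psi_g' \circ \psi_g^{-1}$ would be an $S$-automorphism of $\mathscr{X}$ in the strict sense of \autoref{dfn:automorphisms_of_deformation} (covering $\mathrm{id}_S$ and restricting to $\mathrm{id}_{\mathscr{X}_0}$), and the rigidity of such automorphisms -- whose infinitesimal deformations are parametrized by $H^0(X,\mathscr{T}_X)$ -- forces it to be the identity after sufficiently shrinking $S$ around $0$. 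Granted this uniqueness, both $\psi_{gh}$ and $\psi_g \circ \psi_h$ lie above $\varphi_{gh} = \varphi_g \circ \varphi_h$ and restrict to $gh$ on the central fiber, hence they coincide and $\Psi$ is a group homomorphism.

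The main subtlety is this last rigidity step, i.e., upgrading the infinitesimal vanishing $H^0(X,\mathscr{T}_X) = 0$ to an honest uniqueness statement for $S$-automorphisms of $\mathscr{X}$ restricting to $\mathrm{id}_X$ on the central fiber. This requires invoking the standard integration principle for the automorphism group-sheaf of a proper flat family, whose Lie algebra at the identity section is identified with $H^0(X,\mathscr{T}_X)$; the remaining assertions of the lemma are then formal consequences of the universal property.
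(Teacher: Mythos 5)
Your proposal is correct and follows essentially the same route as the paper: both twist the marking $\alpha$ by $g$, use the uniqueness clause of universality to get $\varphi_{gh}=\varphi_g\circ\varphi_h$, and then obtain $\Psi$ from the triviality of $\Aut(\mathscr{D})$ under the hypothesis $H^0(X,\mathscr{T}_X)=0$. The only difference is presentational: where you sketch the rigidity step via the automorphism group space of the family, the paper simply cites \cite[Proposition A.1]{Gri18} (noting that smoothness is not needed there), so no new idea is involved.
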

    
    \begin{proof} 
        Let $g,h \in G$. Setting $\beta \coloneqq g\circ \alpha$ and $\gamma \coloneqq h\circ \alpha$, we obtain two deformations of $X$:
        \[
            \mathscr{D}_g = (\pi\colon\mathscr{X}\to S, \, \beta \colon \mathscr{X}_0\to X) \quad \text{ and } \quad \mathscr{D}_h = (\pi\colon\mathscr{X}\to S, \, \gamma \colon \mathscr{X}_0\to X) .
        \] 
        By the versality of $\mathscr{D}$ there exist two maps $\varphi_g, \varphi_h \colon (S,0) \to (S,0)$ and two morphisms $\psi_g, \psi_h \colon \mathscr{X} \to \mathscr{X}$ such that the following diagrams commute:
        \begin{center}
            \begin{tikzcd}[row sep = 4em, column sep = large]
                \mathscr{X}\simeq \mathscr{X}\times_S S \arrow[d, "\pi"'] \arrow[r, "\psi_h"] & \mathscr{X}\simeq \mathscr{X}\times_S S \arrow[d, "\pi"'] \arrow[r, "\psi_g"] & \mathscr{X} \arrow[d, "\pi"] \\
                (S,0) \arrow[r, "\varphi_h"] & (S,0) \arrow[r, "\varphi_g"] & (S,0)  
            \end{tikzcd}
        \end{center}
        and
        \begin{center}
            \begin{tikzcd}[row sep = 4em, column sep = 7em]
            \mathscr{X}_0 \arrow[d, "\alpha"'] \arrow[rd, "\gamma"] \arrow[r, "\psi_h|_{\mathscr{X}_0}"] & \mathscr{X}_0 \arrow[r, "\psi_g|_{\mathscr{X}_0}"] \arrow[d, "\alpha"'] \arrow[rd, "\beta"] & \mathscr{X}_0 \arrow[d, "\alpha"] \\
            X \arrow[r, "h"] & X \arrow[r, "g"] & X .       
            \end{tikzcd}
        \end{center}
        Since $\psi_g|_{\mathscr{X}_0} = \alpha^{-1}\circ g\circ \alpha$ and $\psi_h|_{\mathscr{X}_0} = \alpha^{-1}\circ h\circ \alpha$, we have
        \begin{equation}\label{eq:group_homomorphism}
        (\psi_g|_{\mathscr{X}_0}) \circ (\psi_h|_{\mathscr{X}_0}) = \alpha^{-1} \circ (g\circ h) \circ \alpha = \psi_{g\circ h}|_{\mathscr{X}_0}.
        \end{equation}
        Now, by the universality of $\mathscr{D}$, the map
        \[ 
                \Phi\colon G\to \Aut\big((S,0)\big), \ g \mapsto \varphi_g .
        \]
        is well defined and is a group homomorphism.
    
        If, additionally, $H^0(X, \mathscr{T}_X) = \{ 0 \}$, then $\Aut(\mathscr{D}) = \{ \Id_{\mathscr{D}} \}$; see, for example, \cite[Proposition A.1]{Gri18} and observe that smoothness does not play any role in the proof of this result.
        Hence, it follows from \eqref{eq:group_homomorphism} that the map $\Psi$ is well defined and is a group homomorphism.
    \end{proof}

    \section{Locally trivial deformations of PEVs}
    \label{section:locally_trivial_deform}
    
    We first derive an analog of \cite[Lemma 4.6]{BL22} for primitive Enriques varieties, which is also an extension of \cite[Proposition 1.1]{OS11b} to the singular setting.
    
    \begin{lem}
        \label{lem:low_coh_PEV}
        Let $Y\simeq X/G$ be a primitive Enriques variety, where $X$ is a primitive symplectic variety and the group $G \subseteq \Aut(X)$ is generated by a purely nonsymplectic automorphism $g \in \Aut(X)$ of finite order $d \geq 2$ and acts freely in codimension one on $X$. The following statements hold:
        \begin{enumerate}[\normalfont (i)]
            \item $H^0(Y,\mathscr{T}_Y)=0$.

            \item Every miniversal deformation of $Y$ is universal. 

            \item $ H^1(Y,\mathscr{T}_Y) \simeq  H^1(X,\mathscr{T}_X)_0 \simeq H^{1,1}(X)_1 $.
        \end{enumerate}   
    \end{lem}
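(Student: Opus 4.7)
The plan is to reduce all three statements to the primitive symplectic cover $X$ and to exploit the symplectic identification $\mathscr{T}_X\simeq \Omega^{[1]}_X$, the Hodge duality of \autoref{prop:Hodge_duality}, and the vanishings built into \autoref{dfn:PSV}. This mirrors the strategy of \cite[Proposition 1.1]{OS11b} in the smooth setting; the new subtleties are that the quotient map $\gamma\colon X\to Y$ is only quasi-\'etale and that $\mathscr{T}_Y$ is merely reflexive.

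The first key input I will need is the quasi-\'etale descent isomorphism $\mathscr{T}_Y \simeq (\gamma_*\mathscr{T}_X)^G$: on the open locus where $\gamma$ is \'etale (whose complement has codimension $\geq 2$) this is classical, and as both sides are reflexive sheaves on $Y$ they must agree everywhere. Combined with the finiteness of $\gamma$ (which kills $R^i\gamma_*$ for $i\geq 1$) and the exactness of $G$-invariants in characteristic zero, this yields $H^i(Y,\mathscr{T}_Y)\simeq H^i(X,\mathscr{T}_X)^G$ for all $i\geq 0$. For part (i), I will then note that $v\mapsto \iota_v\sigma$ extends reflexively to an isomorphism $\mathscr{T}_X\simeq \Omega^{[1]}_X$, whence $H^0(X,\mathscr{T}_X)\simeq H^0\big(X,\Omega^{[1]}_X\big)\simeq \overline{H^1(X,\OO_X)}=0$ by \autoref{prop:Hodge_duality} and the primitivity of $X$.

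For part (ii), the plan is to deduce universality from the vanishing in (i) by invoking the argument of \cite[Proposition A.1]{Gri18} (whose proof uses only $H^0(Y,\mathscr{T}_Y)=0$ and not smoothness): this forces $\Aut(\mathscr{D})$ to be trivial for every deformation $\mathscr{D}$ of $Y$, which upgrades miniversality to universality by the usual abstract argument comparing two classifying maps via an induced automorphism of a pulled-back deformation.

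For part (iii), the first isomorphism $H^1(Y,\mathscr{T}_Y) \simeq H^1(X,\mathscr{T}_X)_0$ is already recorded above, since $H^1(X,\mathscr{T}_X)^G = H^1(X,\mathscr{T}_X)_0$. The core step is to transport the $G$-action along the contraction isomorphism $\mathscr{T}_X \simeq \Omega^{[1]}_X$. Since $g^{[*]}\sigma = \xi\sigma$ and contraction satisfies $g^{[*]}(\iota_v\sigma) = \iota_{g^{[*]}v}(g^{[*]}\sigma) = \xi\,\iota_{g^{[*]}v}\sigma$, the isomorphism intertwines the two $G$-actions only up to a global multiplication by $\xi$; equivalently, it shifts $G$-weights by $+1$. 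Consequently
\[
H^1(X,\mathscr{T}_X)_0 \;\simeq\; H^1\big(X,\Omega^{[1]}_X\big)_1 \;=\; H^{1,1}(X)_1,
\]
which closes the chain of isomorphisms. The main technical obstacle I anticipate is precisely this $G$-weight bookkeeping together with ensuring that the contraction identification is well-defined as a map of reflexive sheaves on the singular $X$; the remaining ingredients reduce to standard reflexivity and finite-descent arguments that are already familiar in the primitive symplectic setting.
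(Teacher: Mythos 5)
Your proposal is correct and follows essentially the same route as the paper: descent of $\mathscr{T}_Y$ to the $G$-invariants of $H^i(X,\mathscr{T}_X)$ (the paper cites \cite[Lemma 5.3]{Graf18} for exactly the quasi-\'etale/reflexive argument you sketch), the vanishing $H^0(X,\mathscr{T}_X)=0$ for part (i) (which the paper imports from \cite[Lemma 4.6]{BL22}, while you rederive it via the contraction isomorphism and \autoref{prop:Hodge_duality}), triviality of deformation automorphisms for part (ii), and the weight shift by $+1$ under $v\mapsto \iota_v\sigma$ for part (iii). The only differences are that you inline the proofs of a couple of cited lemmas; the substance is identical.
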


    \begin{proof}
        Since $G$ acts freely in codimension one on $X$, \cite[Lemma 5.3]{Graf18} yields the isomorphisms 
        \begin{equation}
            \label{eq:coh_tang_quotient}
            H^i(Y,\mathscr{T}_Y) \simeq H^i(X,\mathscr{T}_X)^G \ \text{ for all } \ i \geq 0 .
        \end{equation}

        \noindent (i) Follows immediately from \eqref{eq:coh_tang_quotient} for $i=0$ and \cite[Lemma 4.6]{BL22}.

        \medskip

        \noindent (ii) The assertion is a consequence of (i); see \cite[Subsection 4.3]{BL22}.

        \medskip

        \noindent (iii) 
        Consider the canonical inclusion $\iota \colon X_\mathrm{reg} \hookrightarrow X$.
        Since the sheaves of $\OO_X$-modules $\mathscr{T}_X$ and $\Omega_X^{[1]}$ are reflexive, and hence $\mathscr{T}_X \simeq \iota_* \mathscr{T}_{X_{\text{reg}}}$ and $\Omega^{[1]}_X \simeq \iota_* \Omega^1_{X_{\text{reg}}}$,
        the (unique up to scalar) symplectic form $\sigma \in H^0 \big( X, \Omega_X^{[2]} \big)$ on $X$ induces
        an isomorphism 
        \[ 
            \mathscr{T}_X \to \Omega_X^{[1]} , \ V \mapsto \big( V' \mapsto \sigma(V,V') \big) \, .
        \]
        If $\xi \in \C^*$ is a primitive $d$-th root of unity such that $g^{[*]} \sigma = \xi \sigma$, then it is easy to check that a (locally defined) $G = \langle g \rangle$-invariant reflexive vector field $V$ on $X$ corresponds under the above isomorphism to a (locally defined) reflexive $1$-form $\omega_V$ on $X$ on which $g$ acts via multiplication by $\xi \in \C^*$.
        Therefore, 
        \begin{equation}
            \label{eq:first_coh_under_iso}
            H^1(X,\mathscr{T}_X)_0 \simeq H^1 \big( X, \Omega^{[1]}_X \big)_1 \, .
        \end{equation}
        We obtain (iii) by combining \eqref{eq:first_coh_under_iso} and \eqref{eq:coh_tang_quotient} for $i=1$.
    \end{proof}
    
    We now prove the first main result of the paper, \autoref{thm:locally_trivial_Kuranishi_family_of_PEV}, using \autoref{lem:low_coh_PEV} and building on \cite[Theorem 4.7 and Corollary 4.11]{BL22} and \cite[Propositions 6.1 and 6.2]{Graf18}, see also \cite[Proposition 5.4]{GS21}, avoiding though the use of \cite[Corollary, p.\ 225]{Rim80}.
    
    \begin{proof}[Proof of \autoref{thm:locally_trivial_Kuranishi_family_of_PEV}]~

        \medskip

        \noindent (i)
        To prove all the assertions, we follow the same strategy as the one for the proofs of \cite[Proposition 6.1]{Graf18} and \cite[Proposition 1.2]{OS11b}.
            
        Denote by 
        $ \mathscr{D} = \big( \pi_X \colon \mathscr{X} \to \Deflt(X) , \, \mathscr{X}_0 \simeq X \big) $
        the locally trivial Kuranishi family of the primitive symplectic variety $X$. By \cite[Theorem 4.7 and Corollary 4.11]{BL22} we know that $\Deflt(X)$ is smooth of dimension 
        $h^1(X,\mathscr{T}_X)= h^{1,1}(X)$, and that $\mathscr{D}$ is universal for locally trivial deformations (for all of its fibers).  
        Since $H^0(X, \mathscr{T}_X) = \{ 0 \}$ by \cite[Lemma 4.6]{BL22}, we even have $\Aut(\mathscr{D}) = \{ \Id_{\mathscr{D}} \}$. It follows from \autoref{lem:extending_group_action_to_deformation} that the group $G$ acts both on $\mathscr{X}$, extending the original $G$-action on $X \simeq \mathscr{X}_0$, and on $\Deflt(X)$, having $0\in \Deflt(X)$ as a fixed point of the $G$-action, in such a way that the morphism $\pi_X \colon \mathscr{X} \to \Deflt(X)$ becomes $G$-equivariant.

        Denote now by $B$ the connected component of the fixed locus of the induced action of $G$ on $\Deflt(X)$ that contains the point $0\in \Deflt(X)$ and recall that $T_0 \Deflt(X) \simeq H^1(X, \mathscr{T}_X)$. Since $G$ is finite, the action of $G$ on $\Deflt(X)$ linearizes locally around $0$, see \cite[Lemme 1]{Car54}. Hence, $B$ is smooth at $0$ and $T_0 B \simeq H^1(X,\mathscr{T}_X)^G \simeq H^1(Y,\mathscr{T}_Y)$ by \autoref{lem:low_coh_PEV}(iii). In conclusion, $B$ is smooth (as a germ) and of the claimed dimension.
        
        Consider next the induced family $\mathscr{Z} \coloneqq \mathscr{X}\times_{\Deflt(X)} B \to B$ and observe that $G$ acts fiberwise on $\mathscr{Z}$. According to \cite[Proposition 5.4]{GS21}, see also \cite[Proposition 6.2]{Graf18}, the morphism $\pi_Y \colon \mathscr{Y}\coloneqq \mathscr{Z}/G \to B$ is a locally trivial deformation of $ Y \simeq \mathscr{Y}_0 \simeq \mathscr{X}_0 / G$ and we have the following commutative diagram:
        \begin{equation}
            \label{eq:KS_maps}
            \begin{tikzcd}[column sep = large]
                H^1(Y,\mathscr{T}_Y) \arrow[r] & H^1(X,\mathscr{T}_X) \\
                T_0 B \arrow[r] \arrow[u] & T_0\Deflt(X) \arrow[u] .
            \end{tikzcd}
        \end{equation}
        Here, the vertical maps are the Kodaira--Spencer maps of the deformations under consideration and the upper horizontal map is induced by the restriction $\frac{1}{d}\tr$ of the trace map $\tr \colon \gamma_* \mathscr{T}_X \to \mathscr{T}_Y$ to the invariant subsheaf $(\gamma_* \mathscr{T}_X)^G \subset \gamma_* \mathscr{T}_X$, see \cite[Definition 5.1 and Lemma 5.3]{Graf18}, where $\gamma$ is the quotient map $X \to X/G \simeq Y$. By the universality of $\pi_X$, the associated Kodaira--Spencer map $T_0\Deflt(X)\to {H^1(X,\mathscr{T}_X)}$ is an isomorphism. Thus, the induced map on $G$-invariant subspaces, namely $T_0 B \to H^1(X,\mathscr{T}_X)^G \simeq H^1(Y,\mathscr{T}_Y)$, is also an isomorphism. Since $B$ is smooth at $0$, it follows that the locally trivial deformation $\pi_Y \colon \mathscr{Y} \to B$ of $Y \simeq \mathscr{Y}_0$ is miniversal, cf.\ \cite[Proposition 2.5.8]{Sernesi06book}, and \autoref{lem:low_coh_PEV}(ii) implies now that it is actually universal for locally trivial deformations. In particular, $B \simeq \Deflt(Y)$.
    
        Finally, since the universal deformation $\pi_Y \colon \mathscr{Y}\to B$ of $Y$ is locally trivial, the sheaf $\mathscr{T}_{\mathscr{Y}/B}$ is flat over $B$.
        By \autoref{lem:low_coh_PEV}(i) and by the upper semi-continuity of the function $t \mapsto H^0(\mathscr{Y}_t,\mathscr{T}_{\mathscr{Y}_t})$, see \cite[Chapter III, Theorem 4.12(i)]{BS76}, we infer that $H^0(\mathscr{Y}_b,\mathscr{T}_{\mathscr{Y}_b})=0$ for any $b\in B$. Hence, the very last part of the statement follows from the openness of versality; see \cite[Hauptsatz, p.\ 140]{Grauert74} and  \cite[Remark 5.8]{FK87}.

        \begin{center}
            \begin{tikzcd}[row sep = normal, column sep = 5em]
                \mathscr{Z} = \mathscr{X}\times_{\Deflt(X)}\Deflt(Y) \arrow[rd, "/G"'] \arrow[r, hook] & \mathscr{X} \arrow[rr, "\pi_X"] &  & \Deflt(X) \\
                & \mathscr{Y} \arrow[rr, "\pi_Y"] &  & \Deflt(Y) \arrow[u, hook]
            \end{tikzcd}
        \end{center}
                
        \noindent (ii) 
        Consider the universal locally trivial Kuranishi family $\big( \pi_X \colon \mathscr{X} \to \Deflt(X) , \, \mathscr{X}_0 \simeq X \big)$ of $X$. To prove the statement, we proceed in three steps.
        
        \medskip
        
        \emph{Step 1}:
        We will show that $G$ acts nonsymplectically on any fiber of $\pi_X$ over a sufficiently small neighborhood of $0\in \Deflt(X)$ inside $B \simeq \Deflt(Y)$.
        
        To this end, we first closely follow the proof of \cite[Corollary 4.11]{BL22}, extracting the following facts: Let $\mu \colon \mathscr{W} \to \mathscr{X}$ be a simultaneous resolution of $\pi_X \colon \mathscr{X} \to \Deflt(X)$ and denote by $j\colon \mathscr{X}_{\text{reg}}\to \mathscr{X}$ the inclusion of the regular locus. Note that 
        $\Omega^{[2]}_{\mathscr{X}/S} \simeq j_* \, \Omega^{2}_{\mathscr{X}_{\text{reg}}/S}$.
        By passing to a sufficiently small neighborhood $S$ of $0\in \Deflt(X)$, we have an isomorphism 
        \[
            (\pi_X \circ \mu)_* \, \Omega^{2}_{\mathscr{W}/S}\to (\pi_X \circ j)_* \, \Omega^{2}_{\mathscr{X}_{\text{reg}}/S} 
        \]
        of line bundles on $S$. In particular, there exists a relative holomorphic $2$-form $\sigma$ on $\mathscr{X}_{\text{reg}}$
        whose pullback extends to a relative holomorphic $2$-form on $\mathscr{W}$. After shrinking $S$ even further, 
        we also know that for any $s\in S$ the fiber $\mathscr{X}_s = \pi_X^{-1}(s)$ is a primitive symplectic variety whose symplectic form is the restriction $\sigma_s \coloneqq \sigma |_{\mathscr{X}_s}$ of $\sigma \in H^0 \big( \mathscr{X}, \Omega^{[2]}_{\mathscr{X}/S} \big)$ to $\mathscr{X}_s$.
        
        Next, slightly abusing notation, we still denote by $g$ the automorphism of $\mathscr{X}$ extending the original automorphism $g$ of $X$, where $G = \langle g \rangle$. Since the map $\pi_X \colon \mathscr{X}\to \Deflt(X)$ is $G$-equivariant, we have 
        \[ 
            g^* \Omega^{[p]}_{\mathscr{X}/\Deflt(X)} \simeq \Omega^{[p]}_{\mathscr{X}/\Deflt(X)} \ \text{ for any } p \in \{ 0, \dots, \dim X \} .
        \]
        In particular, since $\sigma \in H^0 \big( \mathscr{X}, \Omega^{[2]}_{\mathscr{X}/S} \big)$, we infer that $g^*\sigma \in H^0 \big( \mathscr{X}, \Omega^{[2]}_{\mathscr{X}/S} \big)$.
        Consider now the symplectic form $\sigma_0 = \sigma |_{\mathscr{X}_0}$ on the central fiber $\mathscr{X}_0 \simeq X$, let $\xi$ be a primitive $d$-th root of unity such that $g^* \sigma_0 = \xi \sigma_0$, and consider also the evaluation map
        \[
            H^0 \big( S, (\pi_X \circ j)_* \, \Omega^{2}_{\mathscr{X}_\text{reg} / S} \big) \to H^0 \big( \mathscr{X}_s, \Omega^{[2]}_{\mathscr{X}_s} \big) \simeq \C.
        \]
        Since for any $k \in \{2, \dots, d\}$ we have $(g^*\sigma - \xi^k\sigma) (0) \neq 0$, after possibly shrinking $S$, we deduce that for any $k\in\{2,\dots, d\}$ the value of the section $g^* \sigma - \xi^k \sigma$
        at any point $s\in S\cap B$ is also nonzero. Hence, $g^* \sigma_s = \xi \sigma_s$ for any point $s\in S\cap B$; in other words, $G$ acts nonsymplectically on each fiber over $S\cap B$, as claimed.
        
        \medskip

        \emph{Step 2}:
        We will show that $G$ acts freely in codimension one on any fiber of $\pi_X$ over a sufficiently small neighborhood of $0\in \Deflt(X)$ inside $B \simeq \Deflt(Y)$.
        
        To prove this claim, we adopt the same strategy as in Step 2 of the proof of \cite[Proposition 6.2]{Graf18}, referring to op.\ cit.\ for more details. Let $\Delta \subseteq S\cap B$ be a small polydisk centered at $0 \in \Deflt(X)$, where $S$ is a small neighborhood of $0$ as in Step 1, and let $b\in \Delta$. Let $\mathcal{U}_b\subseteq \mathscr{Y}$ be an open subset such that $\mathcal{U}_b\cap \mathscr{Y}_b\neq \emptyset$ and let $\mathcal{V}_b\subseteq \mathscr{Z}$ be one of the connected components of the preimage of $\mathcal{U}_b$ under $\mathscr{Z}\to \mathscr{Y}=\mathscr{Z}/G$. Then $\mathcal{V}_b \cap \mathscr{Z}_b \neq \emptyset$. Using reflexive vector fields in $\mathscr{T}_{\mathscr{Z}}(\mathcal{V}_b)$ and the local $\C$-action associated to them, we can construct as in the second paragraph in Step 2 of the proof of \cite[Proposition 6.2]{Graf18} a $G$-equivariant isomorphism such that the following diagram commutes:
        \begin{equation}
            \label{diag:free_in_codim_one}
            \begin{tikzcd}[column sep = large]
                \left(\mathcal{V}_b\cap \mathscr{Z}_0\right) \times \Delta \arrow[rr, " \cong "  ] \arrow[rd, "\text{pr}_2"'] &  & \mathcal{V}_b \arrow[ld, "\pi"] \\
                & \Delta
            \end{tikzcd}
        \end{equation}
        Here, the $G$-action on $\mathcal{V}_b$ is the one induced by that on $\mathscr{Z}$, while the $G$-action on the product $(\mathcal{V}_b\cap \mathscr{Z}_0)\times \Delta$ is given as follows: on the first factor it coincides with the $G$-action on the central fiber and on the second factor it is the identity.
        To show that the $G$-action on $\mathscr{Z}_b$ is free in codimension one for $b\in B \cap S$ close to $0$, we note that the total space $\mathscr{Z}$ of the family is covered by open subsets $\mathcal{V}_b$ satisfying the conditions in the diagram \eqref{diag:free_in_codim_one}. Using this diagram we see that the action on such a fiber $\mathscr{Z}_b$ is locally described by the action on the central fiber $X$, where it is free in codimension one by assumption. Since the deformation $\mathscr{Z}\to B$ is locally trivial by construction, the fiber $\mathscr{Z}_b$ can be covered using the open subsets $\mathcal{V}_b$, and we deduce that the action is free in codimension on the whole fiber $\mathscr{Z}_b$. 

        \medskip
        
        \emph{Step 3}: Conclusion.

        It follows from Steps 1 and 2 that if $S$ is any sufficiently small neighborhood of $0 \in \Deflt(X)$, then for each $b \in S\cap B$ the group $G$ acts nonsymplectically and freely in codimension one on the primitive symplectic variety $\mathscr{X}_b = \pi_X^{-1}(b)$, and hence the corre\-sponding quotient $\mathscr{X}_b/G = \mathscr{Y}_b = \pi_Y^{-1}(b)$ is a primitive Enriques variety, as claimed. Finally, the assertion that both $\mathscr{X}_b$ and $\mathscr{Y}_b$ are projective for every $b \in S \cap B$ follows immediately from \autoref{lem:PEVs_are_projective}.
        
        \medskip

        \noindent (iii)
        Consider the universal locally trivial Kuranishi family $\big( \pi_Y \colon \mathscr{Y} \to \Deflt(Y) , \, \mathscr{Y}_0 \simeq Y \big)$ of $Y$ and a simultaneous resolution $\mathscr{W} \to \mathscr{Y}$ thereof, which exists by \cite[Lemma 4.9]{BL22}. If $S$ is a sufficiently small neighborhood of $0 \in \Deflt(X)$ as in (ii), then for each $t \in S \cap \Deflt(Y)$ the fiber $\mathscr{Y}_t = \pi_Y^{-1}(t)$ is a primitive Enriques variety, so it is projective and has rational singularities. Therefore, by assumption and by \cite[Lemma 2.13(4)]{BL22}, the central fiber $\mathscr{Y}_0 \simeq Y$ satisfies \cite[Equality (2.4), p.\ 230]{BL22} with respect to the induced fiberwise resolution $\mu \colon \mathscr{W}_0 = W \to \mathscr{Y}_0 = Y$; namely, 
        \begin{equation}\label{eq:BL22_Qfactorial}
            \Ima \left( H^2(W, \Q) \to H^0 \big(Y, R^2 \mu_* \Q_W \big) \right) = \Ima \left( \bigoplus_i \Q [E_i^{\mu}] \to H^0 \big(Y, R^2 \mu_* \Q_W \big) \right) ,
        \end{equation}
        where the $E_i^\mu$ are the $\mu$-exceptional prime divisors on $W = \mathscr{W}_0$.
        This equality shows that the $\Q$-factoriality of a compact algebraic space with rational singularities depends only on the topology of a given resolution; 
        see \cite[Proposition 12.1.6]{KM92} and the proof of \cite[Proposition 12.1.7]{KM92}. By \cite[Proposition 5.1]{AV21} the equality \eqref{eq:BL22_Qfactorial} remains valid under small locally trivial deformations.
        Hence, taking \cite[Remark 4.10]{BL22} into account and passing to a topologically trivializing open subset $0 \in T \subseteq S \cap \Deflt(Y)$ for the locally trivial family $\mathscr{W} \to \mathscr{Y} \to \Deflt(Y)$, we deduce from \cite[Lemma 2.13]{BL22} that each fiber $\mathscr{Y}_t = \pi_Y^{-1}(t)$, $t \in T$, is $\Q$-factorial. This completes the proof.
    \end{proof}

    \section{Local Torelli theorems}
    \label{section:LocalTorelliThms}

    In this section we establish a local Torelli theorem for primitive Enriques varieties (see \autoref{thm:local_Torelli_PEV}) and one for bielliptic surfaces (see \autoref{thm:local_Torelli_bielliptic_surface}), similarly to the case of Enriques surfaces. These two results will be used crucially in Section \ref{section:applications} to determine the behaviour of certain primitive Enriques varieties under small deformations.

    \subsection{A local Torelli theorem for PEVs}
    \label{subsection:localTorelli_PEV}
        
    \begin{proof}[Proof of \autoref{thm:local_Torelli_PEV}]
        We proceed in four steps, following the strategy of the proof of \cite[Theorem 2.4]{OS11b}.
        The notion of marking of a primitive Enriques variety, which will be used in the proof below, can be found in Subsection \ref{subsection:PEVs}.
        
        \medskip 

        \emph{Step 1}: Construction of the local period map.
        
        By shrinking the smooth base of the locally trivial Kuranishi family $\pi_Y \colon \mathscr{Y}\to \Deflt(Y)$ of the given primitive Enriques variety $Y \simeq X / G$, we obtain a locally trivial deformation $f_Y \colon \mathscr{Y} \to S$ of $Y$ over a simply connected and smooth base $S \subseteq \Deflt(Y)$. We denote by $f_X \colon \mathscr{X} \times_{\Deflt(X)} S \to S $ the corresponding locally trivial deformation of $X$, where $\pi_X \colon \mathscr{X} \to \Deflt(X)$ is the locally trivial Kuranishi family of the primitive symplectic variety $X$. 
        Since $S$ is by construction a sufficiently small neighborhood of $0 \in \Deflt(X)$, for each $s \in S$ we have $\mathscr{Y}_s \simeq \mathscr{X}_s / G$, so that the BBF lattice $\big( H^2( \mathscr{X}_s ,\Z)_{\text{tf}} \, , q_{\mathscr{X}_s} \big)$ of $\mathscr{X}_s$ is endowed with an orthogonal $\mu_d(\C)$-action under the identification $G \leftrightarrow \mu_d(\C)$.

        Fix now an $L$-marking $\phi \colon \big( H^2(X,\Z)_{\text{tf}} \, , q_X \big) \to (L,q)$ of $Y$. Since the locally trivial family $f_X \colon \mathscr{X} \to S$ is topologically locally trivial by \cite[Proposition 5.1]{AV21}, the local system $R^2 (f_X)_* \Z_{\mathscr{X}}$ on $S$ is actually a constant sheaf on $S$ by \cite[Proposition 1.5]{ZS10}. Note that its stalk at a point $s \in S$ is canonically isomorphic to the singular cohomology group $H^2(\mathscr{X}_s,\Z)$, where $\mathscr{X}_s = f_X^{-1}(s)$. Hence, all these cohomology groups (for varying $s\in S$), and thus their torsion-free parts $H^2(\mathscr{X}_s,\Z)_\text{tf}$, are isomorphic. In view of the invariance of the BBF form under locally trivial deformations due to \cite[Lemma 5.7]{BL22}, it follows that the $L$-marking of $Y$ can be uniquely extended to an $L$-marking of the restricted family $f_Y \colon \mathscr{Y}\to S$; the fiberwise marking is denoted again by $\phi$ with a slight abuse of notation.
        
        Consequently, we obtain a period map
        \[
            \mathscr{P}_Y \colon S \to \Omega(Y), \ s \mapsto \big[ \phi(\sigma_{\mathscr{X}_s}) \big] ,
        \]
        which is holomorphic. Indeed, it is the restriction of the period map $\mathscr{P}_X$, which is itself holomorphic, since $(f_X)_* \, \Omega^{[2]}_{\mathscr{X}/S}$ is a holomorphic subbundle of $H^2(X,\C) \otimes \OO_S$; see the proof of 
        \cite[Proposition 5.5]{BL22}.

        \medskip
        
        \emph{Step 2}: Reduction step.
        
        Since both $S$ and $\Omega(Y)$ are smooth, to prove the statement, it suffices to show that 
        \begin{enumerate}[(a)]
            \item the differential $d (\mathscr{P}_Y)_0$ of $\mathscr{P}_Y$ at $0\in S$ is injective, and
            
            \item $\dim T_0 S = \dim T_{[\phi(\sigma)]} \, \Omega(Y)$.
        \end{enumerate}

        First, as far as item (a) is concerned, since the differential $d (\mathscr{P}_X)_0$ is bijective by \cite[Proposition 5.5]{BL22}, taking into account the horizontal inclusions and the vertical isomorphisms from the diagram \eqref{eq:KS_maps} we conclude that the differential $d (\mathscr{P}_Y)_0$ is injective, 
        as desired. Next, to prove item (b), we first note that
        \begin{equation}
            \label{eq:dim_S}
            \dim T_0 S = h^1(Y,\mathscr{T}_Y) = h^{1,1}(X)_1
        \end{equation}
        according to \autoref{lem:low_coh_PEV}(iii). The remainder of the proof is therefore devoted to the computation of $\dim T_{[\phi(\sigma)]} \, \Omega(Y)$. We proceed by distinguishing two cases for $d$ and we use the following observation: as $\phi$ induces an isomorphism $H^2(X,\C)_1\simeq L_{\C,1}$, by \cite[Proposition 10.1]{Voisin07book_I} we know that
        \begin{equation}
            \label{eq:dim_grass}
             T_{[\phi(\sigma)]} \, \P(L_{\C,1}) \simeq \Hom \big( \C \phi(\sigma) , L_{\C,1}/ \C \phi(\sigma) \big) \simeq \Hom \big( \C \sigma , H^2(X,\C)_1/ \C \sigma \big).
        \end{equation}
        
        \medskip
        
        \emph{Step 3}: Case $d\geq 3$.

        For any $\omega\in H^2(X,
        \C)_1$ we have $g^{[*]} \omega = \xi \omega$, and thus
        \[
            q_X(\omega) \overset{(\ast)}{=\!=} q_X \big(  g^{[*]} \omega \big) = q_X ( \xi \omega) = \xi^2 q_X (\omega),
        \]
        where $(\ast)$ holds due to \eqref{eq:BBF_form_invariance}.
        Since $d>2$, we infer that the weight space $L_{\C,1}$ is isotropic in $L_{\C}$. Therefore,
        \[
            \Omega(Y) = \big\{ [\omega] \in \P(L_{\C,1}) \mid q_X(\omega,\overline{\omega})>0 \big\}
        \]
        is an open subset of $\P(L_{\C,1})$. It follows now from \eqref{eq:dim_grass} that
        \begin{equation}
            \label{eq:dim_Omega}
            \dim T_{[\phi(\sigma)]} \, \Omega(Y)= \dim \Hom \big( \C \sigma , H^2(X,\C)_1 / \C \sigma \big).
        \end{equation}
        
        Now, since the Hodge decomposition \eqref{eq:Hodge_decomposition_PSV} is preserved by automorphisms of $X$, we have
        \[
            H^2(X,\C)_1 = H^{2,0}(X)_1 \oplus H^{1,1}(X)_1 \oplus H^{0,2}(X)_1 ,
        \]
        where $H^{0,2}(X)_1 = 0$, because $g^{[*]}$ acts via multiplication by $\overline{\xi}$ on $H^{0,2}(X)$ and $d>2$. Hence, by \eqref{eq:dim_S} and \eqref{eq:dim_Omega} we deduce that 
        \[ 
            \dim T_0 S = h^{1,1}(X)_1 = \dim T_{[\phi(\sigma)]} \, \Omega(Y) ,
        \]
        which finishes the proof in the case $d \geq 3$.

        \medskip
        
        \emph{Step 4}: Case $d=2$.
        
        Since $d=2$, we have $\xi =\overline{\xi} = -1$, and thus $\overline{\sigma} \in H^{0,2}(X)_{-1} = H^{0,2}(X)_{1}$. The Hodge decomposition \eqref{eq:Hodge_decomposition_PSV} yields
        \begin{align*}
            H^2(X,\C)_1 & = H^{2,0}(X)_1 \oplus H^{1,1}(X)_1 \oplus H^{0,2}(X)_1 \\
            & = \C \sigma \oplus H^{1,1}(X)_1 \oplus \C \overline{\sigma} .
        \end{align*}
        By \eqref{eq:BBF_form_evaluation_1}, \eqref{eq:BBF_form_evaluation_2} and \eqref{eq:quadratic_form} we deduce that the orthogonal complement of $\sigma\in H^2(X,\C)_1$ with respect to $q_X(\cdot, \cdot)$ is the subspace $V = \C \sigma \oplus H^{1,1}(X)_1$. Therefore,
        \begin{equation}
            \label{eq:dim_orthogonal}
            \dim \Hom (\C \sigma, V/ \C \sigma) = h^{1,1}(X)_1.
        \end{equation}

        Now, since $\Omega(Y)$ is an open subset of the quadric in $\P(L_{\C,1})$ defined by the equation $q_X(\cdot) = 0 $, the tangent space $T_{[\phi(\sigma)]} \, \Omega(Y)$ is precisely the tangent space of the quadric at the point $[\phi(\sigma)]$. Since the quadratic form $q_X$ is nondegenerate, the quadric $\big\{ q(\cdot)= 0 \big\}$ is smooth. Using this, together with \eqref{eq:dim_grass}, we deduce that the tangent space of the quadric at the point $[\phi(\sigma)]$ is precisely 
        $\Hom (\C \sigma , V/\C \sigma)$. It follows now from \eqref{eq:dim_S} and \eqref{eq:dim_orthogonal} that
        \[ 
            \dim T_0 S = h^{1,1}(X)_1 = \dim T_{[\phi(\sigma)]} \, \Omega(Y) ,
        \]
        which completes the proof in the case $d = 2$.
    \end{proof}

    \subsection{A local Torelli theorem for bielliptic surfaces}
    \label{subsection:localTorelli_BS}

    In this subsection
    we describe the Kuranishi family of a bielliptic surface (see \autoref{prop:Kuranishi_family_of_BS}) and we provide subsequently a local Torelli theorem for bielliptic surfaces (see \autoref{thm:local_Torelli_bielliptic_surface}). 

    \medskip
    
    First of all, we recall some basic facts about abelian surfaces. Specifically, if $A$ is an abelian surface, then $H^2(A,\Z)$ is a free $\Z$-module of rank $ 6$ by \cite[Corollary 1.3.3]{BL04book}, while by \cite[Lemma 1.3.1]{BL04book} the cup product induces an isomorphism
    \[ 
        \bigwedge^2 H^2(A,\Z) \simeq H^4(A,\Z) \simeq \Z .
    \]
    Therefore, $H^2(A,\Z)$ has a lattice structure and the resulting nondegenerate, integral, quadratic form on $H^2(A,\Z)$ is denoted by $q_A$. Note that $\big( H^2(A,\Z) \, , q_A \big)$ is an even, unimodular, nondegenerate lattice with signature of type $(3,\ast)$, which is isomorphic to $U^{\oplus3}$, where $U$ is the hyperbolic plane over $\Z$;
    see \cite[Chapter 3, Subsections 1.3, 2.2 and 2.3]{Huy16book}.
    The direct summands of the Hodge decomposition of $H^2(A,\C)$ have dimension
    \[ 
        h^{2,0}(A) = h^{0,2}(A) = 1 \quad \text{ and } \quad h^{1,1}(A) = 4 ,
    \]
    and the symplectic form $\sigma_A$ on $A$ induces an isomorphism 
    \begin{equation}
        \label{eq:tang_cotang_iso_AS}
        \mathscr{T}_A \simeq \Omega^1_A ,
    \end{equation}
    which yields
    \[
        H^1(A,\mathscr{T}_A) \simeq H^1(A,\Omega^1_A) \simeq H^{1,1}(A) .
    \]
    
    Furthermore, the Kuranishi space $\Def(A)$ of $A$ is smooth of dimension 
    \[
        \dim \Def(A) = \dim_\C H^1(A,\mathscr{T}_A) = h^{1,1}(A) = 4 
    \]
    by the Bogomolov--Tian--Todorov unobstructedness theorem, and the Kuranishi family $\pi_A \colon \mathscr{A} \to \Def(A)$ of $A$ is universal; see, for example, \cite[Theorems 1.1 and 4.2]{Wavrik69}.
    In particular, up to shrinking $\Def(A)$, each fiber of $\pi_A$ is an abelian surface, and also the following \emph{local Torelli statement} holds: the local period map 
    \[
        \mathscr{P}_A \colon \Def(A) \to \Omega(A) , \ t \mapsto \P \big( H^{2,0}( \mathscr{A}_t ) \big),
    \]
    where 
    \[
        \Omega(A) \coloneqq \big\{ [\omega] \in \P \big( H^2(A,\C) \big) \mid q_A(\omega)=0 , \ q_A(\omega,\overline{\omega})>0 \big\}, 
    \]
    is a local isomorphism, see \cite[Subsection 10.3.2]{Voisin07book_I}.
    
    \medskip
    
    We now turn our attention to bielliptic surfaces in order to derive an analogous statement to the above. Given a bielliptic surface $B$, recall that there exist an abelian surface $A$ and a finite cyclic group $G$ of order $d \in \{ 2, 3, 4, 6 \}$ which acts freely on $A$ such that $B \simeq A/G$; see \cite[Section 6]{OS11b} and \cite[Subsections 2.1 and 2.2]{FTVB22}. The quotient map $A \to B$ is the \emph{global index-one cover} of $B$ (also referred to as the \emph{canonical cover} of $B$), see \cite[Definition 5.19]{KM98}.
    Since 
    $H^0(A,\omega_A)^G \simeq H^0(B, \omega_B) = \{ 0 \} $,
    the group $G \subseteq \Aut(A)$ must contain at least one nonsymplectic automorphism.
    In fact, by inspecting the classification table of bielliptic surfaces due to Bagnera--De Franchis, one can readily check that 
    \begin{equation}
        \label{eq:bielliptic_nonsymplectic}
        G \text{ acts on $A$ nonsymplectically} ,
    \end{equation} 
    i.e., the generator of $G$ is a purely nonsymplectic automorphism of order $d$; see \cite[Section 6]{OS11b} or \cite[Theorem, pp.\ 473-474]{Suwa69}.
    Taking \eqref{eq:tang_cotang_iso_AS} and \eqref{eq:bielliptic_nonsymplectic} into account and arguing as in the proof of \autoref{lem:low_coh_PEV}(iii), we infer that 
    \begin{equation}
        \label{eq:bielliptic_low_coh}
        H^1(B,\mathscr{T}_{B}) \simeq H^1(A,\mathscr{T}_A)^G = H^1(A, \mathscr{T}_A)_0 \simeq H^1(A,\Omega^1_A)_1 \simeq H^{1,1}(A)_1 \, .
    \end{equation}
    We can now describe the Kuranishi family of $B \simeq A/G$.
    
    \begin{prop}[The Kuranishi family of a bielliptic surface]
        \label{prop:Kuranishi_family_of_BS}
        Let $B \simeq A/G$ be a bielliptic surface, where $A$ is an abelian surface and $G \subseteq \Aut(A)$ is generated by a purely nonsymplectic automorphism $g \in G$ of order $d \in \{2, 3, 4, 6\}$ and acts freely on $A$. Let also $\pi_A \colon \mathscr{A} \to \Def(A)$ be the universal Kuranishi family of $A$ and set $\mathscr{A}_t \coloneqq \pi_A^{-1}(t)$. The following statements hold:
        \begin{enumerate}[\normalfont (i)]
            \item Let $F$ be the connected component of the fixed locus of the induced action of $G$ on $\Def(A)$ that contains the point $0\in \Def(A)$. The Kuranishi space
            $\Def(B) \simeq F$ of deformations of $B$ is smooth of dimension 
            \[ 
                h^1(B, \mathscr{T}_B) = 
                \begin{cases}
                    2 , & \text{if } \ d = 2, \\
                    1 , & \text{if } \ d \in \{3,4,6\} ,
                \end{cases}
            \]
            and the Kuranishi family 
            \[
                \pi_B \colon \mathscr{B} \coloneqq \big( \mathscr{A} \times_{\Def(A)} \Def(B) \big) / G \to \Def(B)
            \]
            of $B$ is universal for all of its fibers.
            
            \item If $V$ is a sufficiently small neighborhood of $0 \in \Def(B) \subset \Def(A)$, then for each $t \in V$ the fiber $\mathscr{B}_t$ is a bielliptic surface isomorphic to the quotient $\mathscr{A}_t/G$ of the abelian surface $\mathscr{A}_t$.
        \end{enumerate}
    \end{prop}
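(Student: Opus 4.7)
The plan is to mimic the proof of \autoref{thm:locally_trivial_Kuranishi_family_of_PEV}(i)--(ii), adjusted for the two distinguishing features of the present situation: $A$ is smooth (so local triviality of deformations is automatic and the simultaneous-resolution step becomes unnecessary), whereas $H^{0}(A,\mathscr{T}_{A}) \neq 0$ (so \autoref{lem:extending_group_action_to_deformation} is not available to extend the $G$-action to the total space of the Kuranishi family).

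First I would extend the $G$-action on $A$ to the universal Kuranishi family $\pi_{A} \colon \mathscr{A} \to \Def(A)$ by means of Doan's theorem (\autoref{thm:Doan}), which applies since $A$ is a compact complex manifold and the finite group $G$ is reductive; this yields a $G$-action on $\Def(A)$ fixing $0$ under which $\pi_{A}$ becomes equivariant. I would then define $F$ to be the connected component through $0$ of the $G$-fixed locus of $\Def(A)$, apply Cartan's linearization \cite[Lemme 1]{Car54} to conclude that $F$ is smooth at $0$, and identify $T_{0}F \simeq H^{1}(A,\mathscr{T}_{A})^{G} \simeq H^{1,1}(A)_{1}$ via \eqref{eq:bielliptic_low_coh}. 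The explicit numerical value of this dimension requires a direct computation: since the tangent bundle of $A$ is holomorphically trivial, $H^{1,1}(A) \simeq H^{1,0}(A) \otimes H^{0,1}(A)$ as $G$-representations, and the weights of $H^{1,0}(A)$ under the $G$-action can be read off from the explicit normal form of the nonsymplectic generator $g$ on the universal cover of $A$ in each of the possible orders $d \in \{2,3,4,6\}$ supplied by the Bagnera--De Franchis classification. This case analysis is the only non-formal step and, I expect, the main source of technical tediousness.

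Next I would form the quotient family $\pi_{B} \colon \mathscr{B} \coloneqq (\mathscr{A} \times_{\Def(A)} F)/G \to F$ and verify that its Kodaira--Spencer map at $0$ is an isomorphism via the same commutative diagram as \eqref{eq:KS_maps}, using the universality of $\pi_{A}$ together with \eqref{eq:bielliptic_low_coh}; this identifies $\pi_{B}$ with the Kuranishi family of $B$ and $F$ with $\Def(B)$. For the universality claim I would invoke Wavrik's theorem \cite[Theorem 4.2]{Wavrik69}: once part (ii) is established, every fiber $\mathscr{B}_{t}$ in a neighborhood of $0$ is a bielliptic surface, so $h^{0}(\mathscr{B}_{t},\mathscr{T}_{\mathscr{B}_{t}}) = 1$ is locally constant in $t$, which forces the miniversal family $\pi_{B}$ to be universal at every nearby fiber.

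Finally, for part (ii), I would show that on every fiber $\mathscr{A}_{t}$ with $t \in V$ close to $0$ the $G$-action remains free and nonsymplectic. Freeness follows because the fixed locus $\mathscr{A}^{G}$ is a closed analytic $\pi_{A}$-proper subset of $\mathscr{A}$ disjoint from the central fiber $A \simeq \mathscr{A}_{0}$, so its image in $\Def(A)$ is a closed analytic subset not containing $0$; shrinking $F$ around $0$ ensures that $G$ acts freely on $\mathscr{A}_{t}$ for every $t \in F$ close to $0$. Nonsymplecticity on nearby fibers is established exactly as in Step~1 of the proof of \autoref{thm:locally_trivial_Kuranishi_family_of_PEV}(ii): the relative $2$-forms $(\pi_{A})_{*}\Omega^{2}_{\mathscr{A}/\Def(A)}$ form a line bundle on $\Def(A)$, the $G$-equivariance of $\pi_{A}$ gives $g^{*}\sigma \in H^{0}(\mathscr{A},\Omega^{2}_{\mathscr{A}/\Def(A)})$, and the condition $g^{*}\sigma_{0} = \xi\sigma_{0}$ propagates to nearby fibers by continuity of the fiberwise evaluation map, yielding that each $\mathscr{B}_{t} \simeq \mathscr{A}_{t}/G$ is indeed a bielliptic surface.
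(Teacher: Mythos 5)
Your proposal is correct, and its overall architecture coincides with the paper's proof: both extend the $G$-action to $\mathscr{A}\to\Def(A)$ via \autoref{thm:Doan} (precisely because $H^0(A,\mathscr{T}_A)\neq 0$ rules out \autoref{lem:extending_group_action_to_deformation}), linearize the action via Cartan to get smoothness of $F$ at $0$, identify $T_0F$ through \eqref{eq:bielliptic_low_coh} and the Kodaira--Spencer diagram as in \eqref{eq:KS_maps}, deduce freeness on nearby fibers from properness of $\pi_A$ applied to the nonfree locus, and upgrade miniversality to universality via Wavrik once part (ii) gives constancy of $h^0(\mathscr{B}_t,\mathscr{T}_{\mathscr{B}_t})=1$. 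You deviate in two sub-steps. For the dimension count you propose an explicit weight computation on $H^{1,0}(A)\otimes H^{0,1}(A)$ using the Bagnera--De Franchis normal forms; the paper instead quotes $h^1(B,\mathscr{T}_B)$ directly from \cite[Theorem 4.9]{Lang79}. Your computation is correct and self-contained, at the cost of the case analysis you acknowledge. For nonsymplecticity on nearby fibers you propagate the relation $g^{*}\sigma_0=\xi\sigma_0$ through the line bundle $(\pi_A)_{*}\Omega^{2}_{\mathscr{A}/\Def(A)}$, exactly as in Step 1 of the proof of \autoref{thm:locally_trivial_Kuranishi_family_of_PEV}(ii); the paper argues instead that $\mathscr{A}_t\to\mathscr{B}_t$ is finite \'etale, so $\mathscr{B}_t$ has torsion canonical class and, by constancy of Hodge numbers in the smooth family, is again bielliptic, whence nonsymplecticity of the fiberwise action follows from \eqref{eq:bielliptic_nonsymplectic}. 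Both routes work; yours is more direct, while the paper's obtains biellipticity of $\mathscr{B}_t$ as the primary conclusion rather than as a consequence (if you take your route, you should still add the one-line classification argument that a free, fiberwise nonsymplectic quotient of a two-dimensional complex torus is bielliptic). One small imprecision: for freeness you should push forward the full nonfree locus $\bigcup_{h\in G\setminus\{\Id\}}\Fix(h)$ rather than $\mathscr{A}^{G}=\Fix(g)$ alone, since freeness requires every nontrivial power of $g$ to act without fixed points.
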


    \begin{proof}
        To show that $\mathscr{B} \coloneqq \big( \mathscr{A} \times_{\Def(A)} F \big) / G \to F$ is a miniversal deformation of $B \simeq \mathscr{B}_0$ and that $F$ is smooth, we closely follow the proof of \autoref{thm:locally_trivial_Kuranishi_family_of_PEV}(i), but with the following differences: in the second paragraph, we apply \autoref{thm:Doan}
        instead of both \cite[Lemma 4.6]{BL22} and \autoref{lem:extending_group_action_to_deformation} to extend the $G$-action on $A$ equivariantly to its universal family $\mathscr{A} \to \Def(A)$, while in the third paragraph, we replace \autoref{lem:low_coh_PEV}(iii) by \eqref{eq:bielliptic_low_coh}.
        
        Since, according to \cite[p.\ 107, 3rd paragraph]{Grauert74}, miniversal deformations of a given compact complex manifold are uniquely determined (up to isomorphism), we conclude that the above deformation is actually the Kuranishi family $\mathscr{B} \to \Def(B) \simeq F$ of $B$; in particular, the germ $\big( \Def(B), 0 \big)$ is reduced. Hence, we have 
        \[ 
            \dim \Def(B) = h^1(B, \mathscr{T}_B) = 
            \begin{cases}
                2 , & \text{if } \ d = 2, \\
                1 , & \text{if } \ d \in \{3,4,6\} ,
            \end{cases} 
        \]
        where the second equality holds by \cite[Theorem 4.9]{Lang79}. Next, to show that $\mathscr{B} \to \Def(B)$ is universal for $B$ (and thus for all of its fibers due to the openness of versality),
        by \cite[Theorems 1.1 and 4.2]{Wavrik69} it suffices to show that $h^0(\mathscr{B}_t, \mathscr{T}_{\mathscr{B}_t})$ is independent of $t$ in a sufficiently small neighborhood of $0 \in \Def(B)$. However, granting (ii) for a moment, this follows immediately from \cite[Theorem 4.9]{Lang79}, which shows that bielliptic surfaces carry a unique (up to scalar) holomorphic vector field.

        Consequently, to complete the proof of the theorem, it remains to show (ii). To do so, we proceed in two steps.

        \medskip

        \emph{Step 1}: 
        We will show that $G$ acts freely on any fiber of $\pi_B$ over a sufficiently small neighborhood of $0 \in \Def(B) \subset \Def(A)$.
        
        To this end, consider the nonfree locus $\mathscr{A}^\text{nf}$ of the induced $G$-action on $\mathscr{A}$, that is,
        \[ 
            \mathscr{A}^\text{nf} \coloneqq \big\{ x \in \mathscr{A} \mid G_x \neq \{ \Id_G \} \big\} ,
        \]
        which is a closed subset of $\mathscr{A}$
        and does not contain any point of $\mathscr{A}_0 \simeq A$ by assumption. Since the morphism $\pi_A \colon \mathscr{A} \to \Def(A)$ is smooth and proper, the image $\pi_A \big( \mathscr{A}^\text{nf} \big)$ is a closed subset of $\Def(A)$ that does not contain $0 \in \Def(A)$ and the restricted morphism $\pi_A|_{\mathscr{A} \setminus \mathscr{A}^\text{nf}} \colon \mathscr{A} \setminus \mathscr{A}^\text{nf} \to \Def(A) \setminus \pi \big( \mathscr{A}^\text{nf} \big)$ is also smooth and proper.
        This yields the assertion; cf.\ the proof of \cite[Proposition 1.2]{OS11b}.
        
        \medskip

        \emph{Step 2}:
        We will demonstrate that $G$ acts nonsymplectically on any fiber of $\pi_B$ over a sufficiently small neighborhood of $0 \in \Def(B) \subset \Def(A)$.

        For $t$ in a sufficiently small neighborhood of $0 \in \Def(B) \subset \Def(A)$ as in Step 1, the quotient map $\mathscr{A}_t \to \mathscr{B}_t \simeq \mathscr{A}_t/G$ is a finite \'etale cover. Therefore, $\mathscr{B}_t$ has torsion canonical class. But since Hodge numbers are constant in the smooth family $\mathscr{B} \to \Def(B)$ whose central fiber $\mathscr{B}_0$ is the given bielliptic surface $B$, we conclude that $\mathscr{B}_t \simeq \mathscr{A}_t/G$ is also a bielliptic surface. In particular, the induced $G$-action on $\mathscr{A}_t$ is nonsymplectic, as explained earlier in this subsection; see \eqref{eq:bielliptic_nonsymplectic}. This finishes the proof.
    \end{proof}

    As a short interlude, we explain here a notion of marking for bielliptic surfaces, which will be tacitly used in the proof of \autoref{thm:local_Torelli_bielliptic_surface}. Specifically, the discussion that precedes \autoref{lem:hermitian_form_PSV} remains valid also in the present framework, and consequently an identical version of \autoref{lem:hermitian_form_PSV} with respect to the Hermitian form associated with $q_A$ can be established for the canonical abelian cover $A$ of a bielliptic surface $B \simeq A/G$. This is due to the following two observations: \eqref{eq:BBF_form_invariance} can be replaced by the invariance of the cup product under homeomorphisms, 
    while \eqref{eq:BBF_form_evaluation_1} can be replaced by the equalities $[\sigma_A] \smile [\sigma_A] = 0 = [\overline{\sigma_A}] \smile [\overline{\sigma_A}]$ and the Hodge--Riemann bilinear relations; see
    \cite[Theorem 6.32]{Voisin07book_I}.
    Therefore, an \emph{$L$-marking of a bielliptic surface} can be defined in the same way as it was previously done for a primitive Enriques variety; see Subsection \ref{subsection:PEVs}.

    \medskip
    
    Finally, we establish the desired statement for bielliptic surfaces, which will only be used in the proof of \autoref{prop:deform_PEV_from_Kum}.
    
    \begin{thm}[Local Torelli theorem for bielliptic surfaces]
        \label{thm:local_Torelli_bielliptic_surface}
        Let $B \simeq A/G$ be a bielliptic surface and let 
        \[
            \Omega(B) \coloneqq \big\{ [\omega] \in \P \big( H^2(A,\C)_1 \big) \mid q_A(\omega)=0 , \ q_A(\omega,\overline{\omega})>0 \big\} 
        \]
        be the period domain for $B$ inside $\P \big( H^2(A,\C)_1 \big)$. If $\pi_B \colon \mathscr{B} \to \Def(B)$ is the universal Kuranishi family of $B$ and if $\mathscr{A}_t \coloneqq \pi_A^{-1}(t)$ is the fiber over $t \in \Def(A)$ of the universal Kuranishi family $\pi_A \colon \mathscr{A} \to \Def(A)$ of $A$, then the local period map
        \[
            \mathscr{P}_B \colon \Def(B) \to \Omega(B) , \ s \mapsto \P \big( H^{2,0}(\mathscr{A}_s) \big)
        \]
        is a local isomorphism at $0 \in \Def(B)$.
    \end{thm}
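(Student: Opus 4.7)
The plan is to mimic the four-step proof of Theorem \ref{thm:local_Torelli_PEV}, replacing primitive symplectic varieties by abelian surfaces and primitive Enriques varieties by bielliptic surfaces. The required ingredients are all in place: the identification $\Def(B) \simeq F$ with a connected component of the $G$-fixed locus in $\Def(A)$ from Proposition \ref{prop:Kuranishi_family_of_BS}, the local Torelli statement for $A$ recalled just before that proposition, the isomorphism $H^1(B,\mathscr{T}_B) \simeq H^{1,1}(A)_1$ from \eqref{eq:bielliptic_low_coh}, and the analog of Lemma \ref{lem:hermitian_form_PSV} for the canonical cover $A$ described in the interlude preceding the theorem.

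First I would construct the period map $\mathscr{P}_B$. After shrinking $\Def(B)$ to a smooth, simply connected neighborhood $S$ of $0 \in \Def(B)$, the restricted family $\pi_A|_S$ is smooth and proper, hence topologically locally trivial, and since $S$ is contractible the local system $R^2(\pi_A|_S)_* \Z$ is constant on $S$. A choice of $L$-marking of $B$ then extends to a fiberwise $L$-marking of the restricted family, and $\mathscr{P}_B$ becomes the restriction of $\mathscr{P}_A$ to $S$, which is holomorphic because $H^{2,0}(\mathscr{A}_s)$ varies in a holomorphic subbundle of $H^2(A,\C) \otimes \OO_S$.

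Since both $\Def(B)$ and $\Omega(B)$ are smooth, it then suffices to verify the injectivity of $d(\mathscr{P}_B)_0$ together with the equality $\dim T_0 \Def(B) = \dim T_{[\phi(\sigma_A)]} \Omega(B)$. Injectivity follows from the commutative diagram of Kodaira--Spencer maps analogous to \eqref{eq:KS_maps}, combined with the fact that $d(\mathscr{P}_A)_0$ is an isomorphism by local Torelli for abelian surfaces. On the source side, $\dim T_0 \Def(B) = h^1(B,\mathscr{T}_B) = h^{1,1}(A)_1$. For the target side, I would split into cases exactly as in Steps 3 and 4 of the proof of Theorem \ref{thm:local_Torelli_PEV}: when $d \geq 3$, the identity $q_A(\omega) = \xi^2 q_A(\omega)$ on $H^2(A,\C)_1$ combined with $\xi^2 \neq 1$ forces the weight space to be $q_A$-isotropic, so $\Omega(B)$ is open in $\P\bigl(H^2(A,\C)_1\bigr)$ and has dimension $\dim H^2(A,\C)_1 - 1 = h^{1,1}(A)_1$ (using that $H^{0,2}(A)_1 = 0$ since $\overline{\xi} \neq \xi$); when $d = 2$, one has $H^2(A,\C)_1 = \C\sigma_A \oplus H^{1,1}(A)_1 \oplus \C\overline{\sigma_A}$ and $\Omega(B)$ is an open subset of the smooth quadric $\{q_A(\cdot) = 0\}$, whose tangent space at $[\sigma_A]$ is $\Hom(\C\sigma_A, V/\C\sigma_A)$ with $V = \C\sigma_A \oplus H^{1,1}(A)_1$ the $q_A(\cdot,\cdot)$-orthogonal of $\sigma_A$ inside the weight space, again of dimension $h^{1,1}(A)_1$.

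The only delicate point I anticipate is verifying that the preparatory properties used in Steps 3 and 4 of the PEV proof transfer cleanly to the bielliptic setting: namely, the $G$-invariance of $q_A$, the evaluations $q_A(\sigma_A) = 0 = q_A(\overline{\sigma_A})$ and $q_A(\sigma_A, \overline{\sigma_A}) > 0$, and the orthogonality of the Hodge decomposition with respect to $q_A(\cdot,\cdot)$. As already noted in the interlude, these follow from the invariance of cup product under homeomorphisms and from the Hodge--Riemann bilinear relations on the abelian surface $A$, so no new conceptual input is required and the argument runs in parallel to that of Theorem \ref{thm:local_Torelli_PEV}.
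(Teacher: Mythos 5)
Your proposal is correct and follows essentially the same route as the paper: the paper's proof likewise constructs $\mathscr{P}_B$ as the restriction of $\mathscr{P}_A$ and then repeats Steps 2--4 of the proof of \autoref{thm:local_Torelli_PEV}, substituting the local Torelli theorem for abelian surfaces for \cite[Proposition 5.5]{BL22}, \eqref{eq:bielliptic_low_coh} for \autoref{lem:low_coh_PEV}(iii), the homeomorphism-invariance of the cup product for \eqref{eq:BBF_form_invariance}, and the Hodge--Riemann bilinear relations for \eqref{eq:BBF_form_evaluation_1}--\eqref{eq:BBF_form_evaluation_2}. The case split on $d$ and the dimension counts you give match the paper's argument exactly.
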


    \begin{proof}
        We may assume that $B \simeq A/G$, where $A$ is an abelian surface and $G \subseteq \Aut(A)$ is generated by a purely nonsymplectic automorphism $g \in \Aut(A)$ of order $d \in \{2, 3, 4, 6\}$ and acts freely on $A$.
        Now, arguing as in Step 1 of the proof of \autoref{thm:local_Torelli_PEV}, we obtain a holomorphic local period map 
        \[ 
            \mathscr{P}_B \colon \Def(B) \to \Omega(B) , \ s \mapsto \big[ \phi(\sigma_{\mathscr{A}_s}) \big] ,
        \]
        which is the restriction of the local period map $\mathscr{P}_A \colon \Def(A) \to \Omega(A)$ for $A$. 
        Here, $\phi$ denotes the fiberwise marking of the deformation over a simply connected base $S$ of the bielliptic surface $B \simeq A/G$ involved in the proof, and $\sigma_{\mathscr{A}_s}$ denotes the symplectic form on the fiber $\mathscr{A}_s$ over $s \in S$ of the corresponding deformation of the abelian surface $A$.
        
        Next, to conclude that $\mathscr{P}_B$ is a local isomorphism at $0 \in \Def(B)$, arguing as in Step 2 of the proof of \autoref{thm:local_Torelli_PEV}, except that we now invoke the local Torelli theorem for abelian surfaces
        and \eqref{eq:bielliptic_low_coh} instead of \cite[Proposition 5.5]{BL22} and \autoref{lem:low_coh_PEV}(iii), respectively, 
        we see that it remains to compute $\dim_\C T_{[\phi(\sigma_A)]} \, \Omega(B)$ and to verify that it is equal to $\dim_\C T_0 \Def(B)$. This will be done below by distinguishing cases for $d$ and by noting that we now work with the lattice $\big( H^2(A,\Z) \, , q_A \big)$, where the nondegenerate symmetric bilinear form $q_A(\cdot, \cdot)$ associated to $q_A(\cdot)$ is the cup product in cohomology.

        If $d \in \{3,4,6\}$, then the same argument as in Step 3 of the proof of \autoref{thm:local_Torelli_PEV} works also in our context due to the following observation: For any $\omega \in H^2(A,
        \C)_1$ we have $g^* \omega = \xi \omega$, where $\xi$ is a primitive $d$-th root of unity, and thus
        \[ 
            \omega \smile \omega \overset{(\ast \ast)}{=\!=\!=}  g^* \omega \smile g^* \omega = \xi^2 (\omega \smile \omega) ,
        \]
        where $(\ast \ast)$ holds because the cup product is preserved under homeomorphisms.
        
        If, finally, $d=2$, then we repeat verbatim the argument from Step 4 of the proof of \autoref{thm:local_Torelli_PEV}, except that we now determine the orthogonal complement of the symplectic form $\sigma_A \in H^{2,0}(A) \subseteq H^2(A,\C)_1$ with respect to $q_A(\cdot, \cdot)$ using the Hodge--Riemann bilinear relations \cite[Theorem 6.32]{Voisin07book_I}.
    \end{proof}

    \section{Applications}
    \label{section:applications}
    
    In the last section of the paper we describe how small locally trivial deformations of primitive Enriques varieties arising as finite quasi-\'etale quotients of certain punctual Hilbert schemes, generalized Kummer varieties or moduli spaces of sheaves on K3 surfaces, look like. Along the way we construct some families of such primitive Enriques varieties. The constructions presented below closely follow \cite{Beauville83a,OS11b} and generalize the ones from the latter to the singular setting, as the considered nonsymplectic group actions are free in codimension one.

    \subsection{Locally trivial deformations of $\Hilb^n(S)/G$}
    \label{subsection:deforming_PEV_from_Hilb}
    
    Consider a K3 surface $S$ with a purely nonsymplectic automorphism $g \in \Aut(S)$ of finite order $d \geq 2$ that acts freely in codimension one on $S$. Note that $S$ is projective by \cite[Proposition 6(i)]{Beauville83b} and set $G \coloneqq \langle g \rangle \subseteq \Aut(S)$. 
    Pick $n \geq 1$ and let $G^{[n]} \coloneqq \langle g^{[n]} \rangle \subseteq \Aut \big( S^{[n]} \big)$ be the induced cyclic group of automorphisms of the Hilbert scheme $S^{[n]}$ of $n$ points on $S$.
    Since $G^{[n]}$ acts on $S^{[n]}$ nonsymplectically by construction
    and freely in codimension one either by construction when $n=1$ or by \cite[Proposition 4.13]{DOTX24} when $n\geq 2$, the quotient 
    \[
        Y \coloneqq S^{[n]} / G^{[n]} = \Hilb^n(S) / G^{[n]} 
    \]
    is a primitive Enriques variety of dimension $2n \geq 2$, which is also $\Q$-factorial by \cite[Proposition 5.15]{KM98}.\footnote{Note that if the given automorphism $g \in \Aut(S)$ is an Enriques involution and if $n \geq 1$ is odd, then the induced $G^{[n]}$-action on $S^{[n]}$ is actually free by \cite[Proposition 4.1]{OS11a}, so the quotient $Y \coloneqq S^{[n]} / G^{[n]}$ is then an Enriques manifold of dimension $2n$ and index $d=2$.}
    We will show in \autoref{prop:deform_PEV_from_Hilb} that any small locally trivial deformation of $Y = S^{[n]} / G^{[n]}$ is of the same form.
    
    To this end, we first construct a family of primitive Enriques varieties with central fiber $Y$ as follows. Consider the locally trivial Kuranishi family $\mathscr{S}' \to \Deflt(S')$ of the primitive Enriques surface $S' \coloneqq S / G$. According to \autoref{thm:locally_trivial_Kuranishi_family_of_PEV}, there is a composition of morphisms 
    \[ 
        \mathscr{S} \to \mathscr{S'} \simeq \mathscr{S}/G \to \Deflt(S') 
    \]
    which gives a family of K3 surfaces with central fiber $\mathscr{S}_0 \simeq S$ and, up to shrinking $\Deflt(S')$, we may also assume that the induced action of $G$ on $\mathscr{S}$ is fiberwise nonsymplectic and free in codimension one.
    The relative Douady space of points yields then a locally trivial family
    \[
        \big( \mathscr{S} / \Deflt(S') \big)^{[n]} \to \Deflt(S')
    \]
    of IHS manifolds, which are Hilbert schemes $\mathscr{S}_t^{[n]}$ of $n$ points on the K3 surfaces $\mathscr{S}_t$,
    with central fiber $\mathscr{S}_0^{[n]} \simeq \Hilb^n (\mathscr{S}_0) \simeq S^{[n]}$, see \cite[p.\ 778]{Beauville83a}. Note that the total space 
    \[
        \big( \mathscr{S}/\Deflt(S') \big)^{[n]}
    \] 
    is naturally endowed with a $G^{[n]}$-action, which is compatible with the induced fiberwise $G^{[n]}$-action on $\mathscr{S}_t^{[n]}$. The latter is in turn nonsymplectic and free in codimension one by construction. 
    It follows from the proof of \cite[Proposition 6.2]{Graf18} that 
    \[
        \mathscr{Y}' \coloneqq \big( \mathscr{S} / \Deflt(S') \big)^{[n]} / G^{[n]} \to \Deflt(S')
    \]
    is a locally trivial deformation of $Y = S^{[n]} / G^{[n]} = \Hilb^n(S) / G^{[n]}$. Moreover, up to further shrinking $\Deflt(S')$, any member $\mathscr{Y}'_t$ of this family is a $\Q$-factorial primitive Enriques variety of the form 
    \[ 
        \mathscr{Y}'_t \simeq \mathscr{S}_t^{[n]} / G^{[n]} = \Hilb^n (\mathscr{S}_t) / G^{[n]} 
    \]
    by construction.
    
    Now, by the universality of the locally trivial Kuranishi family $\mathscr{Y}\to \Deflt(Y)$ of the primitive Enriques variety $Y$, we obtain a unique morphism $\Deflt(S') \to \Deflt(Y)$. We may also assume that these two complex manifolds are simply connected, possibly after further shrinking them.
    The next result indicates how small locally trivial deformations of $Y$ look like and generalizes \cite[Proposition 3.1]{OS11b}; see also \cite[Remark 3.2]{OS11b}.

    \begin{prop}
        \label{prop:deform_PEV_from_Hilb}
        With the same notation and assumptions as above, the morphism $\Deflt(S') \to \Deflt(Y)$ of complex manifolds is a local isomorphism at the origin $0 \in \Deflt(S')$. In particular, any small locally trivial deformation of $Y = S^{[n]} / G^{[n]}$ is of the same form.
    \end{prop}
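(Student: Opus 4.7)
The plan is to derive the proposition from the local Torelli theorem for PEVs, applied to both the primitive Enriques surface $S'$ and the higher-dimensional PEV $Y$, by identifying the two period domains in a natural, commuting way via Beauville's decomposition.

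First, I would invoke Beauville's orthogonal decomposition \cite[Proposition 6]{Beauville83a}, which yields $H^2(S^{[n]},\Z) \simeq H^2(S,\Z) \oplus \Z \cdot e$, where $2e$ is the class of the exceptional divisor of the Hilbert--Chow morphism $S^{[n]} \to \Sym^n(S)$; the BBF form restricts to $q_S$ on the first summand and evaluates to $q_{S^{[n]}}(e) = -2(n-1)$ on the second. Since the exceptional divisor is manifestly preserved by $G^{[n]}$, the class $e$ lies in the invariant part $H^2(S^{[n]},\Z)_0$. Consequently, for every nonzero weight $i$, this decomposition restricts to a $\mu_d(\C)$-equivariant isometric isomorphism $H^2(S,\C)_i \xrightarrow{\sim} H^2(S^{[n]},\C)_i$; in particular, this holds for the relevant weight $i=1$.

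Next, since the symplectic form on $S^{[n]}$ is induced from the one on $S$ under this decomposition, the above isometry descends to an isomorphism of period domains $\Omega(S') \xrightarrow{\sim} \Omega(Y)$ that carries $[\sigma]$ to $[\sigma^{[n]}]$. I would then assemble the commutative square
\begin{center}
\begin{tikzcd}
\Deflt(S') \arrow[r] \arrow[d, "\mathscr{P}_{S'}"'] & \Deflt(Y) \arrow[d, "\mathscr{P}_Y"] \\
\Omega(S') \arrow[r, "\sim"'] & \Omega(Y)
\end{tikzcd}
\end{center}
whose commutativity stems from the fiberwise construction of $\mathscr{Y}' \to \Deflt(S')$ recalled before the statement: the marking and the symplectic form on each fiber $\mathscr{Y}'_t \simeq \mathscr{S}_t^{[n]}/G^{[n]}$ are obtained from those on $\mathscr{S}_t$ via the Beauville decomposition applied in a relative fashion.

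Finally, by \autoref{thm:local_Torelli_PEV} applied to both $S'$ (viewed as a two-dimensional PEV) and $Y$, the two vertical period maps are local isomorphisms at the origin. Since the bottom horizontal arrow is an isomorphism, it follows that the top arrow $\Deflt(S') \to \Deflt(Y)$ is a local isomorphism at $0 \in \Deflt(S')$, which is precisely the claim. The main obstacle is the rigorous verification of the commutativity of this diagram: one needs the Beauville decomposition to be both $G$-equivariant and functorial in the relative setting, so that it genuinely compares the two period maps fiberwise; once this naturality is in hand, the rest of the argument is a short diagram chase combined with the previously established local Torelli theorem.
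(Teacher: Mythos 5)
Your proposal is correct and follows essentially the same route as the paper: Beauville's $\mu_d(\C)$-equivariant decomposition $H^2(S^{[n]},\Z) \simeq H^2(S,\Z) \oplus \Z\delta$ with $\delta$ in the invariant weight space (the paper cites \cite[Theorem 1]{BS12} for the $G^{[n]}$-invariance of the exceptional divisor), the resulting identification $H^2(S,\C)_1 \simeq H^2(S^{[n]},\C)_1$ of period domains, and the commutative square of period maps combined with \autoref{thm:local_Torelli_PEV} applied to both $S'$ and $Y$. The commutativity issue you flag as the main obstacle is resolved in the paper exactly as you suggest, by invoking the naturality of Beauville's construction (the commutative diagram on p.\ 780 of \cite{Beauville83a}).
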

    
    \begin{proof}
        The proof is similar to the one of \cite[Proposition 3.1]{OS11b}, but we provide the details for the reader's convenience. We identify the groups $G$ and $G^{[n]}$ with $\mu_d(\C)$ below.
        
        Let $(L,q) \coloneqq \big( H^2(S^{[n]},\Z) \, , q_{S^{[n]}} \big)$ be the Beauville--Bogomolov--Fujiki lattice of $S^{[n]}$, which is endowed with the canonical $\mu_d(\C)$-action. According to \cite[Proposition 6 and Remark on p.\ 768]{Beauville83a}, there is a $\mu_d(\C)$-equivariant primitive embedding of lattices
        \[
            \iota \colon H^2(S,\Z) \hookrightarrow H^2(S^{[n]},\Z) ,
        \]
        compatible with Hodge structures, such that 
        \[ 
            H^2(S^{[n]},\Z) \simeq \iota \left( H^2(S,\Z) \right) \oplus \Z \delta  ,
        \]
        where $2\delta=[E]$ and $E$ is the prime exceptional divisor of the $\mu_d(\C)$-equivariant Hilbert--Chow morphism
        $ S^{[n]} = \Hilb^n(S) \to S^{(n)} = \Sym^n(S) $. Since $E$ is $\mu_d(\C)$-invariant by \cite[Theorem 1]{BS12},
        we obtain an identification of weight spaces:
        \begin{equation}
            \label{eq:weight_space_identification}
            H^2(S,\C)_1 \simeq H^2(S^{[n]},\C)_1 \, .
        \end{equation}
        
        Set $L' \coloneqq \iota(H^2(S,\Z))$. Since $\iota$ is $\mu_d(\C)$-equivariant, we obtain an $L$-marking for the primitive Enriques variety $Y$ and an $L'$-marking for the primitive Enriques surface $S'$. Due to the hypothesis that $\Deflt(S')$ and $\Deflt(Y)$ are simply connected, these markings uniquely extend to markings of the considered families $\mathscr{Y} \to \Deflt(Y)$ and $\mathscr{S}' \to \Deflt(S')$, respectively.
        Taking now \autoref{dfn:period_data}(1) and \eqref{eq:weight_space_identification} into account, we observe that the period domain $\mathcal{D}_L$ for the $L$-marked primitive Enriques varieties coincides with the period domain $\mathcal{D}_{L'}$ for the $L'$-marked primitive Enriques surfaces.
        
        Finally, consider the diagram
        \begin{center}
            \begin{tikzcd}[row sep = large, column sep = large]
                \Deflt(S' )\arrow[r] \arrow[d,"\mathscr{P}'" swap] & \Deflt(Y) \arrow[d,"\mathscr{P}"] \\
                \mathcal{D}_{L'} \ar[r,equal] & \mathcal{D}_{L} ,
            \end{tikzcd}
        \end{center}
        where $\mathscr{P}'$ and $\mathscr{P}$ are the local period maps for the marked families of primitive Enriques surfaces and primitive Enriques varieties, respectively.
        By \autoref{thm:local_Torelli_PEV}, both $\mathscr{P}'$ and $\mathscr{P}$ are local isomorphisms at the reference point $0$. Thus, to complete the proof, it only remains to check that the above diagram is commutative. But this follows from the commutativity of the diagram in \cite[p.\ 780]{Beauville83a}.
    \end{proof}

    \subsection{Locally trivial deformations of $\Kum^n(A)/G$}
    \label{subsection:deforming_PEV_from_Kum}

    Consider a bielliptic surface $B$. As in Subsection \ref{subsection:localTorelli_BS}, write $B \simeq A/G$, where $A$ is an abelian surface and $G \subseteq \Aut(A)$ is generated by a purely nonsymplectic automorphism $g \in \Aut(A)$ of order $d \in \{2,3,4,6\}$ acting freely on $A$. There exists a finite \'etale cover $\widetilde{A} \to B$ factoring through $A$ such that $\widetilde{A} \simeq E \times F$, where $E$ and $F$ are elliptic curves
    and $\widetilde{T} \coloneqq \ker (\widetilde{A} \to A)$ is a finite group whose image under the second projection $\widetilde{A} \simeq E \times F \to F$ is denoted by $T$, and it also holds that $\widetilde{A} / \widetilde{T} \simeq A$. We refer to \cite[Section 6]{OS11b} and \cite[Subsections 2.1 and 2.2]{FTVB22} for more details.
    
    Next, pick $n \geq 2$ and let $G^{[n+1]} \coloneqq \langle g^{[n+1]} \rangle \subseteq \Aut \big( \Hilb^{n+1}(A) \big)$ be the induced cyclic group of automorphisms of the Hilbert scheme of $n+1$ points on $A$. 
    Note that $G^{[n+1]}$ acts on $\Hilb^{n+1}(A)$ nonsymplectically.
    According to \cite[Proposition 6.1]{OS11a}, if $d \mid n+1$ and $z \in F$ is an $(n+1)$-torsion point, then the generalized Kummer variety $\Kum^n(A)$
    is $G^{[n+1]}$-invariant. Since $G^n \coloneqq \langle g^{[n+1]} |_{\Kum^n(A)} \rangle \subseteq \Aut \big( \Kum^n(A) \big)$ acts on $\Kum^n(A)$ nonsymplectically by construction, and hence freely in codimension one by \cite[Proposition 4.13]{DOTX24}, the quotient 
    \[
        Y \coloneqq \Kum^n(A) / G^n
    \]
    is a $\Q$-factorial primitive Enriques variety of dimension $2n$.\footnote{Note that, under certain additional hypotheses, the induced $G^n$-action on $\Kum^n(A)$ is actually free by \cite[Proposition 6.4]{OS11a}, so the quotient $Y = \Kum^n(A) / G^n$ is then an Enriques manifold of dimension $2n \geq 4$ and index $d \in \{ 2,3, 4 \}$. The assumptions in question are the following ones:
    \begin{itemize}
        \item if $d = 2$, then $mz \notin T$,

        \item if $d = 3$, then $T=0$ and $mz \notin \Z \big( 1 + e^{\frac{2\pi i}{6}} \big)/3$, and

        \item if $d = 4$, then $T=0$ and $2mz \notin \Z \big(1 + e^{\frac{2\pi i}{4}} \big) / 2$,
    \end{itemize}
    where $m \coloneqq \frac{n+1}{d} \in \N^*$ and $T \subseteq F$ was defined at the beginning of this subsection.}
    
    \begin{comm}
        When $n=1$, the variety $\Kum^1(A)$ is isomorphic to the Kummer K3 surface associated to the abelian surface $A$, see \cite[p.\ 769]{Beauville83a}, and the condition $d \mid n+1$ implies that the group $G^1 \simeq \Z/2\Z$ is generated by a fixed-point-free and anti-symplectic involution of $\Kum^1(A)$. Therefore, $Y = \Kum^1(A) / G^1$ is an Enriques surface, whose small deformations are classically well understood. This is the reason why we only consider the case $n\geq 2$ in this subsection.
    \end{comm}
    
    We will demonstrate in \autoref{prop:deform_PEV_from_Kum} that
    any small locally trivial deformation of $\Kum^n(A)/G^n$ is of the same form.
    To this end, consider the Kuranishi family 
    \[
        \widetilde{\mathscr{A}} \to \Def(E \times F)
    \]
    of $\widetilde{A} \simeq E \times F$. Since the latter is endowed with a holomorphic $(G \times \widetilde{T})$-action, so is the Kuranishi family by \autoref{thm:Doan}. Let $\widetilde{\Def(B)}$ be the connected component of the fixed locus of the $(G \times \widetilde{T})$-action on $\Def(E \times F)$ containing the reference point, i.e., the one over which the fiber is isomorphic to $E \times F$, and set 
    \[
        \widetilde{\mathscr{A}}_{|\Def(B)} \coloneqq \widetilde{\mathscr{A}}\times_{\Def(E\times F)} \widetilde{\Def(B)} .
    \]
    It follows from \cite[Proposition 6.2]{Graf18}
    that the Kuranishi family of the bielliptic surface $B \simeq (E \times F) / (G \times \widetilde{T})$ is 
    \[ 
        \mathscr{B} = \widetilde{\mathscr{A}}_{|\Def(B)}/(G \times \widetilde{T}) \to \Def(B) = \widetilde{\Def(B)} .
    \]
    Observe now that the morphism $\mathscr{A} \to \mathscr{B}$ constructed in \autoref{prop:Kuranishi_family_of_BS} is the relative canonical cover of $\mathscr{B}$,\footnote{The term \enquote{relative canonical cover} is justified by the fact that, for any $t\in \Def(B)$, the restriction of $\mathscr{A}\to \mathscr{B}$ to $\mathscr{A}_t\to \mathscr{B}_t$ coincides with the canonical cover of the bielliptic surface $\mathscr{B}_t$.} that is, the index-one cover associated with any lift of the line bundle $\OO_{B_t}(K_{B_t})$ to $\mathscr{B}$. Such an extension exists by \autoref{prop:extension_of_line_bundles} and \autoref{rem:stronger_cond}. Then $\widetilde{\mathscr{A}}_{|\Def(B)} \to \Def(B)$ factors through $\mathscr{A} \to \Def(B)$, because the group $\widetilde{T} \subset G \times \widetilde{T}$ acts on $\widetilde{\mathscr{A}}_{|\Def(B)}$. Thus, 
    \[ 
        \widetilde{\mathscr{A}}_{|\Def(B)}/\widetilde{T} \cong \mathscr{A} \quad \text{and} \quad \mathscr{A}/G \cong \mathscr{B} ;
    \]
    see \cite[Section 2]{FTVB22}.
    
    At this point, after possibly shrinking $\Def(B)$, we can perform the relative Kummer construction for the family $\mathscr{A} \to \Def(B)$ of abelian varieties with central fiber $\mathscr{A}_0 \simeq A$ and obtain a family 
    \begin{equation}\label{eq:kumfamily} 
        \Kum^n \big( \mathscr{A} / \Def(B) \big) \to \Def(B)
    \end{equation}
    of IHS manifolds, which are generalized Kummer varieties $\Kum^n(\mathscr{A}_t)$ associated with the abelian surfaces $\mathscr{A}_t$, with central fiber $\Kum^n(A)$. By our choice of the $(G\times \widetilde{T})$-action on $E\times F$ it follows that the $(G\times \widetilde{T})$-action on the nearby fibers of $\widetilde{\mathscr{A}}_{|\Def(B)}\to \Def(B)$ stays of the same type.
    Then, according to \cite[Proposition 6.1]{OS11a}, the family (\ref{eq:kumfamily}) comes equipped with a $G^n$-action that is fiberwise nonsymplectic and free in codimension one. Moreover, this $G^n$-action is induced by the natural $G^{[n+1]}$-action on the relative Douady space of points
    \[ 
        \big( \mathscr{A} / \Def(B) \big)^{[n+1]}\to \Def(B).
    \]
    Thus, arguing as in Step 2 of the proof of \cite[Proposition 6.2]{Graf18}, we obtain a locally trivial deformation
    \[
        \mathscr{Y}' \coloneqq \Kum^n \big( \mathscr{A} / \Def(B) \big) / G^n \to \Def(B)
    \]
    of $Y = \Kum^n(A) / G^n$ such that any member $\mathscr{Y}'_t$ of this family is a $\Q$-factorial primitive Enriques variety of the form 
    \[ 
        \mathscr{Y}_t' \simeq \Kum^n(\mathscr{A}_t) / G^n .
    \]
    
    Now, by the universality of the locally trivial Kuranishi family $\mathscr{Y} \to \Deflt(Y)$ of $Y$, we obtain a unique morphism $\Def(B) \to \Deflt(Y)$, and we may also assume that $\Def(B)$ and $\Deflt(Y)$ are simply connected. The next result describes the small locally trivial deformations of $Y$ and generalizes \cite[Proposition 3.5]{OS11b}.
    
    \begin{prop}
        \label{prop:deform_PEV_from_Kum}
        With the same notation and assumptions as above, the morphism $\Def(B) \to \Deflt(Y)$ is a local isomorphism at the origin $0 \in \Def(B)$. In particular, any small locally trivial deformation of $Y = \Kum^n(A) / G^n$ is of the same form.
    \end{prop}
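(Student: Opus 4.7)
The plan is to mimic the proof of \autoref{prop:deform_PEV_from_Hilb}, but with \autoref{thm:local_Torelli_PEV} on the one side and \autoref{thm:local_Torelli_bielliptic_surface} on the other side playing the roles of the two local Torelli theorems. More precisely, we identify $G$ and $G^n$ with $\mu_d(\C)$, we set $(L,q) \coloneqq \big( H^2(\Kum^n(A),\Z) \, , q_{\Kum^n(A)} \big)$, and we endow $L$ with the canonical $\mu_d(\C)$-action coming from the induced automorphism $g^{[n+1]} |_{\Kum^n(A)}$.

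The first step is to record a Beauville-type decomposition for the generalized Kummer variety. Namely, by \cite[Proposition 8]{Beauville83a} there exists a primitive embedding of lattices
\[
    \iota \colon H^2(A,\Z) \hookrightarrow H^2(\Kum^n(A),\Z)
\]
that is compatible with Hodge structures and such that
\[
    H^2(\Kum^n(A),\Z) \simeq \iota \big( H^2(A,\Z) \big) \oplus \Z \delta ,
\]
where $2\delta = [E]$ for $E$ the prime exceptional divisor on $\Kum^n(A)$ coming from the restriction of the Hilbert--Chow morphism $\Hilb^{n+1}(A) \to \Sym^{n+1}(A)$. Because the induced action on $\Kum^n(A)$ is obtained by restricting $g^{[n+1]}$, and because the exceptional divisor of the Hilbert--Chow morphism is $G^{[n+1]}$-invariant by \cite[Theorem 1]{BS12}, so is its restriction $E$; hence $\iota$ is $\mu_d(\C)$-equivariant and we obtain the identification of weight spaces
\[
    H^2(A,\C)_1 \simeq H^2(\Kum^n(A),\C)_1 .
\]
Setting $L' \coloneqq \iota \big( H^2(A,\Z) \big)$ therefore produces an $L$-marking for $Y$ and an $L'$-marking for $B$, which (after further shrinking the bases to be simply connected) extend fiberwise to the families $\mathscr{Y} \to \Deflt(Y)$ and $\mathscr{B} \to \Def(B)$. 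In view of \autoref{dfn:period_data}(1) and the above weight-space identification, the period domain $\mathcal{D}_L$ for the $L$-marked primitive Enriques varieties coincides with the period domain $\mathcal{D}_{L'}$ for the $L'$-marked bielliptic surfaces.

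Finally, we consider the diagram
\begin{center}
    \begin{tikzcd}[row sep = large, column sep = large]
        \Def(B) \arrow[r] \arrow[d, "\mathscr{P}_B"'] & \Deflt(Y) \arrow[d, "\mathscr{P}_Y"] \\
        \mathcal{D}_{L'} \arrow[r, equal] & \mathcal{D}_L ,
    \end{tikzcd}
\end{center}
where $\mathscr{P}_B$ and $\mathscr{P}_Y$ denote the local period maps of the two marked families. By \autoref{thm:local_Torelli_bielliptic_surface} and \autoref{thm:local_Torelli_PEV}, both $\mathscr{P}_B$ and $\mathscr{P}_Y$ are local isomorphisms at the reference point $0$, so it suffices to verify that the diagram commutes. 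This is the main technical point: it amounts to checking that, under $\iota_\C$, the cohomology class of the symplectic form on $\Kum^n(\mathscr{A}_t)$ is sent to (a scalar multiple of) the cohomology class of the symplectic form $\sigma_{\mathscr{A}_t}$ on $\mathscr{A}_t$, for every $t$ in a neighborhood of $0 \in \Def(B)$. This follows from the very construction of $\iota$, which identifies $H^{2,0}(A)$ with $H^{2,0}(\Kum^n(A))$ via pullback along the natural map relating $\Kum^n(A)$ to $A^{n+1}$, together with the fact that this identification varies holomorphically in the relative Kummer construction applied to $\mathscr{A} \to \Def(B)$; compare the analogous discussion in \cite[p.\ 780]{Beauville83a} used in the Hilbert scheme case. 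Once the commutativity is established, a two-out-of-three argument for local isomorphisms gives the desired conclusion, and the universality of $\mathscr{Y} \to \Deflt(Y)$ then shows that any small locally trivial deformation of $Y$ is of the form $\Kum^n(\mathscr{A}_t)/G^n$ for some nearby bielliptic deformation $\mathscr{A}_t/G$ of $A/G = B$.
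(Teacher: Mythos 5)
Your proposal is correct and follows essentially the same route as the paper: it transports the argument of \autoref{prop:deform_PEV_from_Hilb} to the Kummer setting via Beauville's decomposition $H^2(\Kum^n(A),\Z) \simeq \iota\big(H^2(A,\Z)\big) \oplus \Z\delta$ from \cite[Proposition 8]{Beauville83a}, uses the $\mu_d(\C)$-invariance of the exceptional divisor to identify the weight spaces $H^2(A,\C)_1 \simeq H^2(\Kum^n(A),\C)_1$, and concludes by the commutativity of the period-map diagram (the analog of \cite[Proposition 10]{Beauville83a}) together with \autoref{thm:local_Torelli_PEV} and \autoref{thm:local_Torelli_bielliptic_surface}. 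The paper's own proof is exactly this reduction, stated more tersely by citing the proof of \cite[Corollary 5(2)]{BNWS11} for the Kummer analog of the relevant remark in \cite{Beauville83a}.
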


    \begin{proof}
        The argument is almost identical to the one for the proof of \autoref{prop:deform_PEV_from_Hilb}, so we omit the details and we only mention the required modifications. First, we apply \cite[Proposition 8]{Beauville83a} and the analog of \cite[Remarque, p.\ 768]{Beauville83a} for the generalized Kummer construction (see, for instance, the proof of \cite[Corollary 5(2)]{BNWS11})
        instead of \cite[Proposition 6]{Beauville83a} and \cite[Remarque, p.\ 768]{Beauville83a}, respectively, to deduce the identification of weight spaces: 
        \[ 
            H^2(A,\C)_1 = H^2 \big( \Kum^n(A), \C \big)_1 \, .
        \]
        Second, we use the analog of \cite[Proposition 10]{Beauville83a} for the Kummer construction to show the commutativity of the corresponding diagram involving local period maps. We conclude by invoking the local Torelli theorems for the primitive Enriques variety $Y = \Kum^n(A) / G^n$ (\autoref{thm:local_Torelli_PEV}) and the bielliptic surface $B = A/G$ (\autoref{thm:local_Torelli_bielliptic_surface}).
    \end{proof}

    \subsection{Locally trivial deformations of $M_S(v,H) / G$}
    \label{subsection:deforming_PEV_from_moduli}
    
    We finally examine small locally trivial deformations of primitive Enriques varieties which are obtained as quotients of moduli spaces of semistable sheaves on certain K3 surfaces.

    \medskip
    
    Consider an Enriques surface $S'$ with universal cover $S \to S'$ and recall that the covering K3 surface $S$ is endowed with a free, and hence nonsymplectic by the holomorphic Lefschetz fixed point formula,
    action of $\pi_1(S') \simeq \Z / 2\Z$, corresponding to an Enriques involution $\tau \in \Aut(S)$. Assume, in addition, that $S'$ is \emph{very general} in the sense of \cite[Proposition 5.5]{OS11a}, so that $S$ satisfies any of the equivalent conditions of \cite[Proposition 5.1]{OS11a}; e.g., $\rho(S)=10$.
    Next, consider a positive Mukai vector $v = (r,\ell,\chi-r) \in \widetilde{H}(S,\Z)$ and a $v$-generic polarization $H \in \Amp(S)$. Then the moduli space $M_S(v,H)$ is a projective irreducible symplectic variety of dimension $v^2 + 2 \geq 4$. 
    Furthermore, since $\rho(S)=10$, both $v$ 
    and $H$ are $\langle \tau \rangle$-invariant by \cite[Proposition 5.1]{OS11a}, so the group $G \coloneqq\langle \tau \rangle \simeq \Z / 2\Z$ acts on $M_S(v,H)$ as well, and this induced action corresponds to the involution $\tau^{\mathrm{ms}} \in \Aut \big( M_S(v,H) \big)$ determined by $\tau \in \Aut(S)$.
    We now claim that $\langle \tau^{\mathrm{ms}} \rangle$ acts on $M_S(v,H)$ nonsymplectically,
    and thus freely in codimension one by \cite[Proposition 4.13]{DOTX24}. Indeed, as explained in the proof of \cite[Proposition 3.3]{OS11b}, see also \cite{PeregoRapagnetta24}, the Hodge isometry \eqref{eq:Hodge_isometry_moduli} is $G$-equivariant in the present setting, so if $\sigma \in H^0(S, \Omega^2_S)$ and $\theta_\C(\sigma) \in H^0 \big( M_S(v,H), \Omega^{[2]}_{M_S(v,H)} \big) $ denote the symplectic form on $S$ and on $M_S(v,H)$, respectively, then we have 
    \[ 
        \left( \tau^{\mathrm{ms}} \right)^* \theta_\C (\sigma) = \theta_\C (\tau^* \sigma) = \theta_\C (-\sigma) =
        - \theta_\C (\sigma) ,
    \]
    as asserted.
    Therefore, the quotient 
    \[
        Y \coloneqq M_S(v,H) / G
    \]
    is a primitive Enriques variety.\footnote{Note that if, additionally, $v = (r, \ell , \chi - r)$ is primitive and $\chi$ is odd, then the induced $G$-action on the projective IHS manifold $M_S(v,H)$ is actually free by \cite[Proposition 5.2]{OS11a}, so the quotient $Y = M_S(v,H)/G$ is then an Enriques manifold of dimension $v^2 + 2$ and index $d=2$.}
    We will show in \autoref{prop:deform_PEV_from_Moduli} that any small locally trivial deformation of $Y = M_S(v,H)/G$ is of the same form.
    
    To this end, we first construct a relative moduli space $\mathscr{M}_{\mathscr{S}/\Def(S')}(\nu,\mathcal{H})\to \Def(S')$ for the family $\mathscr{S} \to \Def(S')$ of K3 surfaces with central fiber $\mathscr{S}_0 \simeq S$, which is induced by the
    Kuranishi family $\mathscr{S}' \to \Def(S')$ of the given very general Enriques surface $S' \simeq \mathscr{S}'_0$, where $\nu$ and $\mathcal{H}$ denote a relative Mukai vector and a $\nu$-generic relative polarization, respectively. 
    Recall that
    \[ 
        v = (r, \ell , \chi - r) = \big( r, c_1(L), s \big) \in \widetilde{H}(S,\Z)
    \]
    is the given Mukai vector on $S$, where $\ell \coloneqq c_1(L)$ for some holomorphic line bundle $L$ on $S$, and that we have the following maps in our context:
    \begin{center}
        \begin{tikzcd}
            S \arrow[d] \\ S'
        \end{tikzcd}
        \hspace{3em}
        \begin{tikzcd}
            \mathscr{S} \arrow[dr] \arrow[rr] && \Def(S') \\
            & \mathscr{S}' \simeq \mathscr{S} / G \arrow[ur]
        \end{tikzcd}
    \end{center}
    Denote by $L'$ (resp.\ $H'$) a holomorphic line bundle on $S'$ whose pullback to $S$ is $L$ (resp.\ $H$). 
    By \autoref{prop:extension_of_line_bundles} and \autoref{rem:stronger_cond} (resp.\ \autoref{cor:local_projectivity} and \autoref{rem:local_projectivity_family_of_EMs}) we obtain, after shrinking $\Def(S')$, a holomorphic line bundle $\mathcal{L}'$ on $\mathscr{S}'$ such that $c_1(\mathcal{L}'_0) = c_1(L')$, where $\mathcal{L}'_0 = \mathcal{L}' |_{S'}$ (resp.\ a relatively ample line bundle $\mathcal{H}'$ on $\mathscr{S}'$ over $\Def(S')$ such that $c_1(\mathcal{H}'_0) = c_1(H')$, where $\mathcal{H}'_0 = \mathcal{H}' |_{S'}$). Denote by $\mathcal{L}$ (resp.\ $\mathcal{H}$) the pullback of $\mathcal{L}'$ (resp.\ $\mathcal{H}'$) to $\mathscr{S}$. Set $\nu \coloneqq \big(r,c_1(\mathcal{L}),s\big)$ and observe that $\nu_0 \coloneqq \nu |_{S} = v$ is the given Mukai vector. Since $H = \mathcal{H} |_S \in \Amp(S)$ is $v$-generic by assumption, after possibly further shrinking $\Def(S')$, by \cite[Theorem 4.3.7]{HuyLehn10book} and \cite[Corollary 4.2 and Lemma 4.4]{PeregoRapagnetta13} we obtain a family 
    \[ 
        \mathscr{M}_{\mathscr{S}/\Def(S')}(\nu,\mathcal{H})\to \Def(S') 
    \]
    of irreducible symplectic varieties with central fiber $M_S(v,H)$, as desired. For later use, we denote by $M_{\mathscr{S}_t} (\nu_t, \mathcal{H}_t)$, $t \in S$, the members of this family, where $\mathscr{S}_t$, $\nu_t$ and $\mathcal{H}_t$ have the obvious meaning.

    Next, since $H$ and $v$ are $\langle \tau \rangle$-invariant, and since we have $H^2(\mathscr{S},\Z) \simeq H^2(\mathscr{S}_t,\Z)$ for any $t\in \Def(S')$ due to the local triviality of the family $\mathscr{S} \to \Def(S')$, we infer that $\mathcal{H}$ and $\nu$ are also invariant with respect to the naturally induced $G$-action on $\mathscr{M}_{\mathscr{S}/\Def(S')}(\nu,\mathcal{H})$.
    Hence, $G$ also acts in a fiberwise way on the family $\mathscr{M}_{\mathscr{S}/\Def(S')}(\nu,\mathcal{H}) \to \Def(S')$ and by construction this fiberwise action is nonsymplectic and free in codimension one.\footnote{As mentioned in \cite[p.\ 1642]{OS11b}, when $v$ is primitive and $\chi$ is odd, the induced $G$-action on the projective IHS manifold $M_{S_t} (v_t, H_t)$ is again free, and hence nonsymplectic by the holomorphic Lefschetz fixed point formula.}
    Since the family 
    \[
        \mathscr{M}_{\mathscr{S}/\Def(S')}(\nu,\mathcal{H}) \to \Def(S')
    \]
    is locally trivial by \cite[Lemma 2.21(1)]{PeregoRapagnetta23}, arguing as in Step 2 of the proof of \cite[Proposition 6.2]{Graf18} we deduce that 
    \[  
        \mathscr{Y}' \coloneqq \mathscr{M}_{\mathscr{S}/\Def(S')}(\nu,\mathcal{H})/G \to \Def(S') 
    \]
    is a locally trivial deformation of $Y = M_S(v,H) / G$ such that any member $\mathscr{Y}'_t$ of this family is by construction a primitive Enriques variety of the form 
    \[ 
        \mathscr{Y}_t' \simeq M_{\mathscr{S}_t} (\nu_t, \mathcal{H}_t) / G .
    \]
    
    Now, by the universality of the locally trivial Kuranishi family $\mathscr{Y} \to \Def(Y)$ of $Y = M_S(v,H) / G$, we obtain a unique morphism $\Def(S') \to \Deflt(Y)$ of complex manifolds, and we may also assume that these two spaces are simply connected. The next result indicates how small locally trivial deformations of $Y$ look like and constitutes a vast generalization of \cite[Proposition 3.3]{OS11b}.

    \begin{prop}
        \label{prop:deform_PEV_from_Moduli}
        With the same notation and assumptions as above, the morphism $\Def(S') \to \Deflt(Y)$ of complex manifolds is a local isomorphism at the origin $0 \in \Def(S')$. In particular, any small locally trivial deformation of $Y = M_S(v,H) / G$ is of the same form.
    \end{prop}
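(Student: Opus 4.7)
The plan is to adapt the arguments used in the proofs of \autoref{prop:deform_PEV_from_Hilb} and \autoref{prop:deform_PEV_from_Kum} to the present setting, replacing the lattice embeddings of \cite{Beauville83a} with the Hodge isometry $\theta \colon v^\perp \to H^2 \big( M_S(v,H), \Z \big)$ from \eqref{eq:Hodge_isometry_moduli}. We identify $G$ with $\mu_2(\C)$.

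First, I would fix a compatible $L$-marking of the families $\mathscr{Y}' \to \Def(S')$ and $\mathscr{Y} \to \Deflt(Y)$, where $(L,q) \coloneqq \big( H^2(M_S(v,H),\Z)_{\text{tf}} \, , q_{M_S(v,H)} \big)$ is endowed with its canonical orthogonal $\mu_2(\C)$-action: after shrinking the bases so that they become simply connected, an $L$-marking of $Y$ extends uniquely to a fiberwise marking on each family, exactly as in Step~1 of the proof of \autoref{thm:local_Torelli_PEV}. Next, I would identify the period domain $\Omega(Y) \subset \P \big( H^2(M_S(v,H),\C)_1 \big)$ with the period domain appearing in the classical local Torelli theorem for the very general Enriques surface $S'$ via its K3 cover. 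By the proof of \cite[Proposition 3.3]{OS11b}, see also \cite{PeregoRapagnetta24}, the Hodge isometry $\theta$ is $G$-equivariant in our setting, so it restricts to an isometry of weight-$1$ eigenspaces. Since $v = (r, \ell, \chi - r)$ lies in the $\tau$-invariant part $\widetilde{H}(S,\C)_0$ (as $\ell \in \NS(S)$ is $\tau$-invariant by \cite[Proposition 5.1]{OS11a}, while $H^0(S,\C)$ and $H^4(S,\C)$ are fixed by $\tau$) and since distinct weight spaces are orthogonal under the $\mu_2(\C)$-invariant Mukai pairing, we obtain
\[
    (v^\perp)_{\C,1} = \widetilde{H}(S,\C)_1 = H^2(S,\C)_1 ,
\]
which via $\theta_\C$ yields the desired identification $H^2 \big( M_S(v,H),\C \big)_1 \simeq H^2(S,\C)_1$ sending $\theta_\C(\sigma_S)$ to $\sigma_S$.

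Finally, I would assemble the commutative diagram
\begin{center}
\begin{tikzcd}[row sep = large, column sep = large]
    \Def(S') \arrow[r] \arrow[d, "\mathscr{P}_{S'}" swap] & \Deflt(Y) \arrow[d, "\mathscr{P}_Y"] \\
    \Omega(S') \arrow[r, equal] & \Omega(Y)
\end{tikzcd}
\end{center}
and invoke the classical local Torelli theorem for Enriques surfaces (for $\mathscr{P}_{S'}$) together with \autoref{thm:local_Torelli_PEV} (for $\mathscr{P}_Y$) to conclude that the top horizontal arrow $\Def(S') \to \Deflt(Y)$ is a local isomorphism at the origin. The main obstacle will be verifying the commutativity of this diagram, which amounts to showing a relative version of the $G$-equivariance of $\theta$ for the family $\mathscr{M}_{\mathscr{S}/\Def(S')}(\nu,\mathcal{H}) \to \Def(S')$, namely that the fiberwise Hodge isometries assemble into a flat identification compatible with the $G$-action extended over $\Def(S')$. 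This should follow by combining the fiberwise statement from \cite[Proposition 3.3]{OS11b} and \cite{PeregoRapagnetta24} with the uniqueness of the marking extensions and the holomorphic dependence of the symplectic form on the fiber parameter, in the spirit of the commutativity argument invoked at the end of the proof of \autoref{prop:deform_PEV_from_Hilb}.
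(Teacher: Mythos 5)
Your proposal is correct and follows essentially the same route as the paper: the key step in both is the $G$-equivariance of the Hodge isometry \eqref{eq:Hodge_isometry_moduli} together with the orthogonality computation showing $H^2(S,\C)_1 \subseteq (v^\perp)_\C$, which yields the identification of weight-one spaces and hence of period domains, after which both local Torelli theorems and the commutativity of the period-map diagram conclude. The only cosmetic differences are that the paper verifies the orthogonality by the direct computation $q_S(\ell,\omega)=-q_S(\ell,\omega)$ rather than by citing orthogonality of distinct weight spaces, and it justifies the commutativity of the diagram simply by the definition of the period maps and the fact that the symplectic form on $M_S(v,H)$ is $\theta_\C(\sigma)$.
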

    
    \begin{proof}
        The proof of the above assertion is essentially the same as the one of \cite[Proposition 3.3]{OS11b}, but we provide the details for the benefit of the reader.
        
        First, we claim that 
        \begin{equation}
            \label{eq:weight_space_in_orthogonal_complement}
            H^2(S,\C)_1 \subseteq (v^\perp)_\C .
        \end{equation}
        Indeed, since the considered Mukai vector $v = (r, \ell, \chi - r) \in \widetilde{H}(S,\Z)$ is $G$-invariant by our assumption that $\rho(S)=10$ and since the BBF lattice $\big( H^2(S,\Z) , q_S \big)$ of $S$ is endowed in the present setting with an orthogonal $G$-action, for any $\omega \in H^2(S,\C)_1$, which is identified with the Mukai vector $ w \coloneqq (0, \omega,0) \in \widetilde{H}(S,\C)$, 
        we have 
        \[ 
            q_S(\ell, \omega) = q_S (\tau^* \ell, \tau^*\omega) = q_S (\ell, -\omega) = - q_S(\ell, \omega) , 
        \]
        whence
        \[
            \langle v,w \rangle_{\C} = q_S(\ell, \omega) = 0 ,
        \]
        which proves the claim.
    
        Next, since the Hodge isometry \eqref{eq:Hodge_isometry_moduli} is $G$-equivariant
        and due to \eqref{eq:weight_space_in_orthogonal_complement}, we obtain an identification of weight spaces:
        \[ 
            H^2(S,\C)_1 \simeq H^2 \big( M_S(v,H),\C \big)_1 \, . 
        \]
        In view of the above, we also readily deduce that the period domain $\Omega(S')$ for the Enriques surface $S'$ coincides with the period domain $\Omega(Y)$ for the primitive Enriques variety $Y$. 
        
        In conclusion, to prove the statement, we argue exactly as in the third paragraph of the proof of \autoref{prop:deform_PEV_from_Hilb}, observing that the corresponding diagram involving local period maps is commutative by the very definition of the period maps and the fact that the symplectic form on $M_S(v,H)$ arises from the symplectic form on $S$ by \eqref{eq:Hodge_isometry_moduli}.
    \end{proof}

    We conclude the paper with the following two observations.
    
    \begin{rem}~
        \begin{enumerate}[(1)]
            \item Note that \cite[Remark 3.4]{OS11b} remains valid in our more general setting; namely, there exist primitive Enriques varieties of the form $M_S(v,H) / G$, where $G \simeq \Z / 2\Z$, which arise from Enriques surfaces that are more special than the very general ones considered above.

            \item 
            In the above setting, where we work with the universal covering K3 surface $S$ of a very general Enriques surface $S'$, consider a relative positive Mukai vector of the form 
            \[ 
                \nu = \big( 1, c_1(\mathcal{L}), s \big) ,
            \]
            where $\mathcal{L}$ is a holomorphic line bundle on $\mathscr{S}$ arising from one on $\mathscr{S}'$ 
            and $\mathcal{H}$ is a relative polarization on $\mathscr{S}$ stemming from one on $\mathscr{S}'$. 
            Recall that the group $G \simeq \Z / 2\Z$ acts on the induced family $\mathscr{M}_{\mathscr{S}/\Def(S')}(\nu,\mathcal{H})\to \Def(S')$ in a fiber-preserving way.
            Up to shrinking $\Def(S')$, the relative moduli space
            \[
                \mathscr{M}_{\mathscr{S}/\Def(S')}(\nu,\mathcal{H}) \to \Def(S')
            \]
            coincides with the relative Douady space 
            \[
                \big( \mathscr{S} / \Def(S') \big)^{[c_2]} \to \Def(S')
            \]
            of $c_2$ points of the family $\mathscr{S}\to \Def(S')$ of K3 surfaces, see \cite[Chapter 3, p.\ 203, Par.\ 3.3]{Huy03book}, where $c_2$ denotes the second Chern class of the rank-one coherent sheaves belonging to the relative moduli space.
            Thus, 
            as both $\nu$ and $\mathcal{H}$ are fiberwisely preserved by construction, 
            the induced family
            \[ 
                \mathscr{M}_{\mathscr{S}/\Def(S')}(\nu,\mathcal{H})/G \to \Def(S')
            \]
            recovers the example studied in Subsection \ref{subsection:deforming_PEV_from_Hilb}.
        \end{enumerate}
    \end{rem}

    \bibliographystyle{amsalpha}
	\bibliography{BibliographyForPapers}

\end{document}